\documentclass[12pt,twoside]{amsart}
\usepackage{amssymb}
\usepackage{amscd}
\usepackage[abbrev,alphabetic]{amsrefs}
\usepackage{hyperref}
\usepackage{comment}
\usepackage{array,multirow,tabularx,longtable}
\usepackage{tikz}
\usetikzlibrary{cd}
\usepackage{here}
\usepackage{multirow}
\usepackage[margin=1.25in]{geometry}
\usepackage{mathtools}

\usepackage{framed}
\usepackage{fancybox}
\usepackage{ascmac}
\title[Frobenius splitting for Fano threefolds]
{Frobenius splitting for Fano threefolds}
\thanks{The author is supported by JSPS KAKENHI Grant Numbers JP22H01112 and JP23K03028.}

\author{Hiromu Tanaka} 
\address{Department of Mathematics, 
Graduate School of Science, 
Kyoto University, 
Kyoto 606-8502, JAPAN} 
\email{tanaka.hiromu.7z@kyoto-u.ac.jp}
\subjclass[2020]{14J45, %Fano varieties
14J30, %3-folds
14G17%Positive characteristic ground fields in algebraic geometry
}
\keywords{prime Fano threefolds, F-split, positive characteristic.}
\DeclareMathOperator{\diag}{diag}

\DeclareMathOperator{\Mat}{Mat}
\DeclareMathOperator{\SO}{SO}
\DeclareMathOperator{\Lie}{Lie}
\DeclareMathOperator{\Spin}{Spin}

\DeclareMathOperator{\Stab}{Stab}

\DeclareMathOperator{\Sym}{Sym}
\DeclareMathOperator{\LG}{LG}
\DeclareMathOperator{\Sp}{Sp}

\newcommand{\Alt}[0]{{\operatorname{Alt}}}

\newcommand{\wt}{\widetilde}

\newcommand{\rank}[0]{\operatorname{rank}}

\newcommand{\codim}[0]{{\operatorname{codim}}}

\newcommand{\red}[0]{{\operatorname{red}}}

\newcommand{\Ker}[0]{\operatorname{Ker}}

\newcommand{\Spec}[0]{\operatorname{Spec}}

\newcommand{\Hom}[0]{{\operatorname{Hom}}}

\newcommand{\Pic}[0]{\operatorname{Pic}}

\newcommand{\Gr}[0]{{\operatorname{Gr}}}
\newcommand{\OGr}[0]{{\operatorname{OGr}}}

\newcommand{\GL}[0]{{\operatorname{GL}}}
\newcommand{\SL}[0]{{\operatorname{SL}}}

\newtheorem{thm}{Theorem}[section]
\newtheorem{lem}[thm]{Lemma}
\newtheorem*{lem*}{Lemma}

\newtheorem{prop}[thm]{Proposition}

\newtheorem*{claim*}{Claim}         
\newtheorem{step}{Step}

\theoremstyle{definition}

\newtheorem{dfn}[thm]{Definition}

\newtheorem{rem}[thm]{Remark}

\newtheorem{nota}[thm]{Notation}         
\makeatletter
  
  \@addtoreset{equation}{thm}
  \makeatother

\newcommand{\cA}{\mathcal{A}}
\newcommand{\cB}{\mathcal{B}}

\newcommand{\cE}{\mathcal{E}}
\newcommand{\cF}{\mathcal{F}}

\newcommand{\cI}{\mathcal{I}}

\newcommand{\cK}{\mathcal{K}}
\newcommand{\cL}{\mathcal{L}}
\newcommand{\cM}{\mathcal{M}}
\newcommand{\cN}{\mathcal{N}}
\newcommand{\cO}{\mathcal{O}}

\newcommand{\cQ}{\mathcal{Q}}

\newcommand{\cU}{\mathcal{U}}
\newcommand{\cV}{\mathcal{V}}

\newcommand{\MO}{\mathcal{O}}

\newcommand{\R}{\mathbb{R}}

\newcommand{\Z}{\mathbb{Z}}

\renewcommand{\P}{\mathbb{P}}

\newcommand{\p}{\mathfrak{p}}

\newcommand{\g}{\mathfrak{g}}

\newcommand{\la}{\langle}
\newcommand{\ra}{\rangle}

\DeclareMathOperator{\Fl}{Fl}

\usepackage{listings}
\begin{document}

\maketitle

\begin{abstract}
We prove that every smooth Fano threefold of characteristic $p>5$ is quasi-$F$-split. 
Moreover, if $p>11$, then it is $F$-split. 
\end{abstract}

\tableofcontents

\section{Introduction}

$F$-splitting was introduced by Mehta and Ramanathan \cite{MR85}. 
It has strong consequences for the geometry and cohomology of algebraic varieties in positive characteristic. 
For instance, smooth projective $F$-split varieties satisfy 
Akizuki--Nakano vanishing \cite{Pet}. 
On the other hand, $F$-split varieties form a rather restricted class. 
For example, a smooth projective $F$-split variety necessarily has non-positive Kodaira dimension.
It is therefore natural to ask under what conditions Fano (type) varieties are $F$-split.

In this direction, it is known that toric varieties and Schubert varieties are $F$-split. 
Moreover, every smooth del Pezzo surface in characteristic $p>5$ is $F$-split, and this bound is optimal \cite[Example 5.5]{Har98}. 
The purpose of this paper is to establish a three-dimensional analogue of this result.
More precisely, our main theorem is as follows.

\begin{thm}[Theorem \ref{t QFS main}, Theorem \ref{t Fsplit main}]\label{intro main}
Let $k$ be an algebraically closed field of characteristic $p>0$ and 
let $X$ be a Fano threefold over $k$, i.e., 
$X$ is a $3$-dimensional smooth projective integral scheme 
over $k$ such that $-K_X$ is ample. 
Then the following hold: 
\begin{enumerate}
\item If $p>11$, then $X$ is globally $F$-regular. 
\item If $p>5$, then $X$ is globally quasi-$F$-regular. 
\end{enumerate}
\end{thm}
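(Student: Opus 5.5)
The plan is to go through the Mori--Mukai classification of Fano threefolds, which is available in positive characteristic by the work of Kawakami, Tanaka and others, and to reduce global ($F$-)regularity, resp.\ global quasi-$F$-regularity, to finitely many structural situations. We use three standard tools.

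First, global $F$-regularity descends along surjective morphisms $f\colon Y\to X$ with $f_*\cO_Y=\cO_X$. It also descends along blow-downs, and it lifts to blow-ups of smooth centres when the centre is itself globally $F$-regular and suitably positioned.

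Second, a Fano variety admitting a log Fano pair structure $(X,\Delta)$ with $\Delta$ a reduced snc-type divisor, whose components and strata are globally $F$-regular, is globally $F$-regular by inversion of adjunction. This is the Fedder/adjunction criterion: $(X,S)$ is purely globally $F$-regular if $S$ is, and $-(K_X+S)$ is ample.

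Third, we use the quasi-$F$-split analogues of these tools, including quasi-$F$-split inversion of adjunction, together with the known fact that smooth del Pezzo surfaces are globally $F$-regular for $p>5$ and quasi-$F$-split in all characteristics.

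The steps are as follows.

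\textbf{Step 1 (Picard rank $\geq 2$).} We treat these cases using the explicit descriptions as blow-ups of $\P^3$, of quadrics, or of del Pezzo threefolds, and as conic bundles or del Pezzo fibrations over $\P^1$ or $\P^2$. In most cases $X$ contains a smooth member $S$ of $|-K_X-A|$, with $A$ nef and $-K_X-S$ ample, such that $S$ is a del Pezzo surface. Inversion of adjunction then reduces the claim to the del Pezzo case, where $p>5$ suffices for $F$-regularity.

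\textbf{Step 2 (index $\geq 2$).} For $\P^3$ and the quadric, the claim is standard, since these are homogeneous or toric. For del Pezzo threefolds $V_d$, a general member of $|H|$ is a smooth del Pezzo surface $S$ with $-K_X-S=H$ ample. Inversion of adjunction then gives global $F$-regularity for $p>5$ and quasi-$F$-splitting in general.

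\textbf{Step 3 (prime Fano threefolds, $\rho=1$, index $1$, genus $g\in\{2,\dots,10,12\}$).} This is where the thresholds $11$ and $5$ should come from. We take a smooth K3 surface $S\in|-K_X|$. Then $-K_X-S=0$, so we cannot apply inversion of adjunction directly. Instead we use a log Fano pair $(X,cS)$ or $(X,\Delta)$ with $\Delta$ a sum of hyperplane sections with singularities. Concretely, we pick $D\in|-K_X|$ a non-normal or reducible member, for instance the tangent hyperplane section along a line or conic, and show that $(X,(1-\varepsilon)D)$ is globally $F$-regular by Fedder-type computations. For the low-genus complete intersections, namely the sextic double solid, the quartic, and the complete intersections of type $(2,3)$ and $(2,2,2)$, we apply Fedder's criterion directly to the defining equations: $f^{p-1}\notin\mathfrak m^{[p]}$. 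The degree count $\deg f^{p-1}$ against the $p$-th power bound yields globally $F$-regular when $p>$ degree-related constants. For the quasi-version we use the quasi-Fedder criterion (Kawakami--Takamatsu--Yoshikawa), whose bounds are weaker and give $p>5$. For the Mukai models with $g\geq 6$, where $X$ is a linear section of a homogeneous space $G/P$ (Grassmannian, orthogonal or Lagrangian Grassmannian, $G_2$-variety), we would use that $G/P$ is Frobenius split compatibly with suitable Schubert-type divisors. We then try to cut by hyperplanes while preserving compatibility, which needs a special choice of hyperplanes; generality follows from an openness argument in families.

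The main obstacle is Step 3, and especially the quartic and sextic double solid cases. For these, Fedder's criterion applied to a general hypersurface requires controlling monomials in $f^{p-1}$, and the sharp threshold $p>11$ presumably comes from the worst case among these hypersurfaces, for example a quartic containing a bad configuration. I would first try to prove $F$-splitting of a single convenient member in each deformation family via an explicit monomial in $f^{p-1}$. Then I would propagate it to all smooth members using the openness of global $F$-regularity for smooth projective families. The crucial missing ingredient is that every smooth member specializes to something controllable; I would replace this with an argument via a degeneration to a pair $(X,D)$ with $D$ a suitable anticanonical divisor, computed through its normalization.
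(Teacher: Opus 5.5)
There is a genuine gap, and it sits in Step 3, which is the part of the theorem with real content.

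\textbf{The gap in Step 3.} Both devices you propose there fail to cover every member of a family. One is checking $f^{p-1}\notin\mathfrak m^{[p]}$ (or a quasi-Fedder condition) for one convenient equation. The other is producing special linear sections of $G/P$ compatibly split with Schubert divisors. Each proves splitting only for particular members, and openness then spreads it only to a Zariski-open subset of each family.

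The theorem, however, is about every smooth Fano threefold. $F$-splitting is open but not closed in smooth families, and it really does depend on the member. The Fermat-type sextic double solid $x_0^6+x_1^6+x_2^6+x_3^6+y^2=0$ from the paper's remark is not $F$-split at $p=11$, although a general sextic double solid is.

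A degeneration argument runs the wrong way: splitting of a special fibre propagates to nearby fibres, not to all fibres. Nor does any degree count decide whether a given $f^{p-1}$ contains a monomial with all exponents at most $p-1$. So the \emph{missing ingredient} you flag is essentially the whole proof, not a technicality.

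\textbf{Step 1 is also incomplete.} Your inversion-of-adjunction device needs a smooth del Pezzo surface $S\in|-K_X-A|$ with $A$ ample, and such an $S$ need not exist. Take the double cover of $\P^1\times\P^2$ branched along a $(2,4)$-divisor. There $\Pic X$ is generated by the pullbacks of the two nef hyperplane classes and $-K_X$ is the pullback of $\MO(1,1)$, so $-K_X-A$ is never a nonzero effective divisor. You give no argument for such families.

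\textbf{What the paper does instead.} The missing idea is a sufficient condition that is cohomological, and hence checkable uniformly for every smooth member. The paper uses the Cartier-operator criterion (Proposition \ref{p GFS criterion}):
\begin{itemize}
\item a Fano threefold is $F$-split if $H^2(X,\Omega^1_X(-pK_X))=H^1(X,\Omega^2_X(-pK_X))=0$;
\item it is quasi-$F$-split if $H^2(X,\Omega^1_X(-p^rK_X))=0$ for all $r>0$.
\end{itemize}
For smooth Fano varieties these notions coincide with global $F$-regularity and global quasi-$F$-regularity.

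For prime Fano threefolds with very ample $-K_X$, the $\Omega^1$-vanishing holds for every member. It comes from the conormal sequence of $X\subset\P^{g+1}$ and vanishing on $\P^{g+1}$, because $X$ is cut out by quadrics for $g\geq 5$ (by a quartic, resp.\ a $(2,3)$ complete intersection, for $g=3,4$). This already gives quasi-$F$-splitting.

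The $\Omega^2$-vanishing is reduced, via $\wedge^2$ of the conormal sequence of the Mukai model $X\subset\Sigma$ and the Koszul resolution, to vanishings $H^j(\Sigma,\Omega^2_\Sigma(m))=0$ on the homogeneous space $\Sigma$. These are proved by writing $\Omega^2_\Sigma$ as the pushforward of a line bundle from $G/B$ (or via good filtrations on $\Gr(4,7)$). One then applies Kodaira vanishing on flag varieties and the strong linkage principle.

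Non-prime Fano threefolds, and prime ones with $-K_X$ not very ample, are quoted from earlier work. It is the sextic double solid, not the quartic, that forces the bounds $11$ and $5$; the quartic needs only $p>7$, resp.\ $p>3$.
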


\begin{rem}
Given a smooth Fano variety $X$ over $k$, 
$X$ is $F$-split (resp.\ quasi-$F$-split) if and only if $X$ is globally $F$-regular (resp.\ globally quasi-$F$-regular). 
\end{rem}

\begin{rem}
The bounds in Theorem \ref{intro main} are optimal. 
Indeed,  the weighted hypersurface
\[
X := \{ x_0^6 + x_1^6 + x_2^6 + x_3^6 +y^2=0 \} \subset \P(1, 1, 1, 1, 3)
\]
is a Fano threefold when $p>3$, and 
the following hold:  
\begin{enumerate}
\item If $p=11$, then $X$ is not $F$-split 
\cite[Example 8.4]{KTLift2}. 
\item If $p=5$, then $X$ is not quasi-$F$-split 
\cite[Example 8.7]{KTLift2}. 
\end{enumerate}
\end{rem}

We say that $X$ is a {\em prime Fano threefold} over $k$ if $X$ is a Fano threefold over $k$ %which is a closed subscheme of $\P^{g+1}_k$ 
such that $\Pic X$ is generated by $\omega_X$. The positive integer $g$ defined by $(-K_X)^3 = 2g-2$ is called the {\em genus} of $X$. 
Theorem \ref{intro main} was established in \cite{KTLift2} 
unless $X$ is a prime Fano threefold such that $|-K_X|$ is very ample. 
Thus it remains to treat prime Fano threefolds such that $|-K_X|$ is very ample, which is the new contribution of this paper. 
In fact, we obtain the following sharper bounds depending on the genus.

\begin{thm}[Theorem \ref{t QFS main}, Theorem \ref{t prime Fsplit main}]\label{intro main prime}
Let $k$ be an algebraically closed field of characteristic $p>0$ and 
let $X$ be a prime Fano threefold of genus $g$ over $k$. 
Then  the following table gives sufficient conditions for $X$ to be
globally $F$-regular or globally quasi-$F$-regular:
{\small
\[
\begin{array}{c|cccccccccc}
g
& 2 & 3 & 4 & 5 & 6 & 7 & 8 & 9 & 10 & 12\\
\hline
\text{GFR}
& p>11 & p>7 & p>5 & p>3 & p>3
& p>2 & p>2 & p>2 & p>3 & p>2\\
\text{GQFR}
& p>5 & p>3 & p>2 & p>0 & p>0
& p>0 & p>0 & p>0 & p>0 & p>0
\end{array}
\]
}
Here GFR (resp.\ GQFR) stands for globally $F$-regular
(resp.\ globally quasi-$F$-regular).
\end{thm}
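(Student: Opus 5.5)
Here is the plan. I go genus by genus and use an explicit model of $X$ in each case. The criteria are the ones used in \cite{KTLift2}. For a Fano variety, global $F$-regularity follows from the existence of a degeneration, or of an anticanonical-type divisor, whose $F$-splitting can be checked by Fedder-type computations. Global quasi-$F$-regularity follows from quasi-$F$-split heights bounded by Fedder-type criteria for complete intersections together with inversion of adjunction. The guiding principle is: if a general member $S\in|-K_X|$ (a K3 surface) is $F$-split (resp. quasi-$F$-split), then $X$ is globally $F$-regular (resp. globally quasi-$F$-regular). Inversion of adjunction for (quasi-)$F$-splitting applies here, because $-K_X-S=0$ and $H^1(X,\mathcal{O}_X(-S))=0$ by Kodaira-type vanishing, which holds for Fano threefolds by \cite{KTLift2}. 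So the problem reduces to finding one K3 surface section, or a suitable special member of the family, with the required splitting property. Openness of $F$-splitting and of quasi-$F$-split height in families then reduces the claim to exhibiting a single good member of each deformation family. This requires knowing that each family is irreducible, which I will take as known for the Mukai classification in characteristic $p$ and which should be recorded as an input.

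For the low genera $g\le 5$ I use the explicit complete intersection models:
\begin{itemize}
\item $g=2$: a sextic double cover of $\P^3$, i.e.\ a hypersurface in $\P(1,1,1,1,3)$;
\item $g=3$: a quartic in $\P^4$ (or a double quadric);
\item $g=4$: a $(2,3)$ complete intersection in $\P^5$;
\item $g=5$: a $(2,2,2)$ complete intersection in $\P^6$.
\end{itemize}
Here Fedder's criterion applies directly: $X=V(f_1,\dots,f_r)$ is $F$-split if and only if $(f_1\cdots f_r)^{p-1}\notin \mathfrak m^{[p]}$. For a suitably chosen member this reduces to a monomial coefficient computation. Take diagonal or Fermat-type equations, or better, products of linear forms arranged so that the Fermat-type term survives. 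The coefficient of $(x_0\cdots x_n)^{p-1}$ is then a multinomial coefficient. It is nonzero mod $p$ exactly when $p$ exceeds the bounds in the table, e.g.\ $p>11$ for the sextic, where the relevant number is $\binom{\cdot}{\cdot}$ with the weight-$6$ structure. The quasi-$F$-split bounds come from the Fedder-type criterion for quasi-$F$-splitting of complete intersections in \cite{KTLift2}, namely the computation of the quasi-$F$-split height of Calabi--Yau hypersurfaces via $\Delta_1(f^{p-1})$. For $g\ge 5$ the Gorenstein-ring computation gives height $\le$ something independent of $p$, which yields $p>0$.

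For $g\ge 6$ I would use Mukai's descriptions: linear sections of Grassmannians and of homogeneous spaces ($\Gr(2,5)$, $\OGr(5,10)$, $\Gr(2,6)$, $\LG(3,6)$, $G_2/P$), together with $\Gr(3,7)$-type zero loci for $g=12$. The strategy is to degenerate $X$ to a toric (or Stanley--Reisner-type) degeneration through Gröbner/SAGBI degenerations of the Plücker embedding, or to find a section whose defining ideal admits a Frobenius splitting compatible with a Schubert-type stratification. Homogeneous spaces are $F$-split, and there is a canonical splitting given by $\sigma^{p-1}$ where $\sigma$ is a section of $-K$ vanishing on boundary divisors. If the linear section $X$ can be chosen so that it is compatibly split, for instance as an intersection of Schubert-type divisors in general position via a Richardson-type construction, then $X$ is $F$-split. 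The small exceptions, $p>3$ for $g=6,10$, would come from the failure of this compatibility in small characteristic. In the worst cases I fall back to a Fedder computation on an explicit monomial-rich equation.

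The main obstacle is genera $6$--$12$ in small characteristic. Producing an explicit $F$-split member there, and verifying that it is smooth and lies in the right deformation family, requires a genuinely case-by-case computation. I would first try to find degenerations to K3 surfaces that are $F$-split, such as Kummer-type or toric K3 sections, and reduce via inversion of adjunction.
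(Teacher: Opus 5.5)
\textbf{The gap: one good member does not give every member.} You reduce to ``a single good member of each deformation family''. The theorem, however, concerns \emph{every} smooth prime Fano threefold of genus $g$, while $F$-splitting and quasi-$F$-splitting are at best open conditions in families. An $F$-split member, a split toric/SAGBI degeneration, or a Fedder computation on a Fermat- or product-type equation therefore only proves the claim for a \emph{general} member. The thresholds in the table are dictated precisely by special members.

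For example, a general smooth quartic threefold is $F$-split in every characteristic: the coefficient of $(x_0x_1x_2x_3)^{p-1}$ in $f^{p-1}$ is a nonzero polynomial in the coefficients of $f$. The Fermat quartic, by contrast, is not $F$-split for $p=7$. Every monomial of $f^6$ is $\prod x_i^{4a_i}$ with $\sum_{i=0}^4 a_i=6$, so some $a_i\ge 2$ and $f^6\in\mathfrak{m}^{[7]}$. Likewise $x_0^6+\cdots+x_3^6+y^2=0$ is not $F$-split for $p=11$ and not quasi-$F$-split for $p=5$. So no choice of a nice equation can produce the bounds $p>7$ or $p>11$, and no multinomial coefficient becomes nonzero ``exactly when $p$ exceeds the bounds''.

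Your inversion-of-adjunction step is correct: the obstruction lies in $H^1(X,\MO_X(K_X))=0$. But it only moves the problem, since you would need an $F$-split (ordinary) K3 member of $|-K_X|$ for \emph{every} $X$, and for the non-split examples above none exists. Finally, for $g\ge 6$ Fedder's criterion is unavailable because $X$ is not a complete intersection, and the ``$p$-independent height bound'' for $g\ge5$ is not justified.

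\textbf{What is needed, and how the paper does it.} You need a criterion that applies uniformly to all smooth members. The paper uses the Cartier-operator criterion (Proposition~\ref{p GFS criterion}). Quasi-$F$-splitting follows from $H^2(X,\Omega^1_X(p^r))=0$ for all $r>0$. $F$-splitting follows from the case $r=1$ together with $H^1(X,\Omega^2_X(p))=0$, where $\MO_X(1)=\MO_X(-K_X)$.

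For $g\ge5$, $X\subset\P^{g+1}$ is cut out by quadrics, so there is a surjection $\MO_X(-2)^{\oplus r}\twoheadrightarrow\cN^\vee_{X/\P^{g+1}}$. Together with the Euler sequence this gives $H^2(X,\Omega^1_X(m))=0$ for all $m\ge 2$, hence quasi-$F$-splitting in every characteristic (Proposition~\ref{p quasi-F-split quadrics}).

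For $H^1(X,\Omega^2_X(p))$, one applies $\wedge^2$ to the conormal sequence of $X$ in its Mukai ambient space $\Sigma$ and resolves $\MO_X$ by the Koszul complex. This reduces the problem to vanishings such as $H^{\ell+1}(\Sigma,\Omega^2_\Sigma(p-\ell))=0$ on the homogeneous variety $\Sigma$. These are proved by pushing forward line bundles from $G/B$ and using three tools: Kodaira vanishing on flag varieties, the strong linkage principle for the negative twists, and good filtrations for $g=12$.

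The same computation on the complete-intersection models gives the bounds for $g=3,4,5$. For instance, on a quartic, $H^1(X,\Omega^2_X(m))=0$ for $m\ge 8$ yields $p>7$. The non-very-ample cases are quoted from earlier work. The characteristic bounds therefore measure how small the twist $p$ may be for these vanishings to hold uniformly in $X$, not properties of a chosen equation.
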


\subsection{Overview of the proof of Theorem \ref{intro main  prime}}

Let $X$ be a prime Fano threefold of genus $g$ such that $|-K_X|$ is very ample. 
In what follows, we focus on the case when $g\geq6$, 
as otherwise the problem is simpler. 
By the Cartier operator criterion (cf.\ Proposition \ref{p GFS criterion}), it is enough to prove the following: 
\begin{enumerate}
\item[(I)] $H^2(X, \Omega^1_X(m))=0$ if $m\geq 2$. 
\item[(II)] $H^1(X, \Omega^2_X(p))=0$. 
\end{enumerate}

(I) 
Fix $m\geq 2$. 
Let $X \subset \P^{g+1}$ be the closed embedding 
induced by $|-K_X|$. 
By the conormal exact sequence
\[
0
\to 
\cN_{X/\mathbb P^{g+1}}^\vee(m)
\to 
\Omega_{\mathbb P^{g+1}}^1(m)|_X
\to 
\Omega_X^1(m)
\to 
0, 
\]
it suffices to show (i) and (ii) below.  
\begin{enumerate}
\item[(i)] $H^2(X, \Omega_{\mathbb P^{g+1}}^1(m)|_X)=0$. 
\item[(ii)] $H^3(X, \cN_{X/\mathbb P^{g+1}}^\vee(m))=0$. 
\end{enumerate}
The Euler sequence on $\mathbb P^{g+1}$ induces 
\[
0
\to 
\Omega_{\mathbb P^{g+1}}^1(m)|_X
\to 
\cO_X(m-1)^{\oplus(g+2)}
\to 
\cO_X(m)
\to 
0, 
\]
which implies  (i). Since $X \subset \P^{g+1}$ is an intersection of quadrics, 
there exists a surjection
$\cO_{\mathbb P^{g+1}}^{\oplus r}
\twoheadrightarrow
\cI_{X/\P^{g+1}}(2)$ for some $r>0$. Hence we get a surjection 
\[
\cO_X(m-2)^{\oplus r}
\twoheadrightarrow
\cI_X/\cI_X^2(m)
=
\cN_{X/\mathbb P^{g+1}}^\vee(m). 
\]
Then we get  $0= H^3(X, \MO_X(m-2))^{\oplus r} 
\twoheadrightarrow
H^3(X, \cN_{X/\mathbb P^{g+1}}^\vee(m))$. 
Thus (ii) holds.

\medskip

(II, $g=7$) 
As the proofs of $g \in \{6, 7, 9\}$ are quite similar, 
we overview the proof for the case when $g=7$. 
In this case, $X$ is a linear section of $\Sigma := \OGr^+(5, 10) \subset \P^{15}$ of codimension $7$. 
Applying $\wedge^2$ to the conormal exact sequence $0 \to \MO_X(-1)^{\oplus 7} \to \Omega_\Sigma^1|_X \to \Omega^1_X \to 0$, 
we get the following exact sequences for some vector bundle $\cF$: 
\[
0 \to \cF \to \Omega_\Sigma^2|_X \to \Omega^2_X \to 0, \qquad 
0 \to \MO_X(-2)^{\oplus 21} 
\to \cF \to \Omega^1_X(-1)^{\oplus 7} \to 0. 
\]
Moreover, we have the Koszul exact sequence
\[
0
\to
\Omega^2_\Sigma(-7)
\to
\Omega^2_\Sigma(-6)^{\oplus 7}
\to
\Omega^2_\Sigma(-5)^{\oplus 21}
\to
\Omega^2_\Sigma(-4)^{\oplus 35}
\]
\[
\to
\Omega^2_\Sigma(-3)^{\oplus 35}
\to
\Omega^2_\Sigma(-2)^{\oplus 21}
\to
\Omega^2_\Sigma(-1)^{\oplus 7}
\to
\Omega^2_\Sigma
\to
\Omega^2_\Sigma|_X
\to0. 
\]
Then it is easy to see that the problem is reduced to showing 
\[
H^{\ell+1}(\Sigma, \Omega^2_\Sigma(p-\ell))=0
\text{ \qquad if \qquad}
0 \leq \ell \leq 7.
\]

For the full flag variety $F := \Spin_{10}/B$ of type $D_5$ and the projection $\pi: F= \Spin_{10}/B \to \Spin_{10}/P = \Sigma$, 
we can check that 
\[
H^i(\Sigma, \Omega^2_\Sigma(p-\ell)) \simeq H^i(F, \cL_{\omega_2+(p-\ell-3)\omega_4+\omega_5})
\]
for the standard choice of fundamental weights $\omega_1, ..., \omega_5$. 
If $\cL_{\omega_2+(p-\ell-3)\omega_4+\omega_5} \otimes \MO_F(-K_F)$ is ample, then  the required equality 
$H^i(F, \cL_{\omega_2+(p-\ell-3)\omega_4+\omega_5})=0$ 
is ensured by the Kodaira vanishing for flag varieties. 
However, we also need some cases in which this condition does not hold, e.g., $(p, \ell) = (3, 7)$. 
%\in \{-1, -2, -3, -4\}$. 

In what follows, we treat the case when 
$(p, \ell) = (3, 7)$, i.e., we overview the proof of 
$H^i(F, \cL_{\lambda})=0$ for 
\[
\lambda := \omega_2-7\omega_4+\omega_5 = (-2,-2,-3,-3,4). 
\]
Suppose $H^i(F, \cL_\lambda) \neq 0$ for some $i$. 
Then 
$H^i(F, \cL_\lambda)$ has some composition factor 
$L(\mu)$ of highest weight $\mu$. 
By the strong linkage principle (Theorem \ref{t SLP}), 
we obtain $\mu \leq \lambda^+$, i.e., 
\[
\lambda^+-\mu
=
\sum_{j=1}^5c_j\alpha_j
\]
for some $c_1,\ldots,c_5\in\mathbb Z_{\geq0}$. 
By using $\rho=\omega_1+\cdots+\omega_5=(4, 3, 2, 1, 0)$,  
$\lambda^+$ can be computed as follows (cf.\ Subsection \ref{ss SLP}): 
\[
\begin{array}{c|c|c|c}
\lambda & \lambda+\rho & \lambda^++\rho & \lambda^+\\
\hline
(-2,-2,-3,-3,4)
& (2,1,-1,-2,4)
& (4,2,2,1,1)
& (0,-1,0,0,1)
\end{array}
\]
For the linear function $f: \R^5 \to \R$ defined by  $f(x_1,\ldots,x_5):=x_1+x_2$, 
we have  $f(\lambda^+)=-1$, $f(\alpha_2)=1$, and 
$f(\alpha_1) = f(\alpha_3)=f(\alpha_4)=f(\alpha_5)=0$. 
As $\mu$ is dominant, we have $f(\mu) \geq 0$, 
which leads to the following contradiction: 
\[
-1 \geq -1-f(\mu)
=f(\lambda^+)- f(\mu)
=f(\lambda^+-\mu)
=\sum_{j=1}^5 c_j f(\alpha_j)
=c_2\geq0. 
\]

\medskip

(II, $g=12$) 
By the conormal sequence and Koszul sequence, 
the problem is reduced to proving 
\[
H^{>0}(\Gr(4, 7), 
\cQ^{\otimes k} \otimes \MO_{\Gr(4, 7)}(t-1)) =0 
 \]
for every $k \geq 0$ and $t \geq 0$, 
or a similar vanishing result. 
Given a dominant weight $\lambda$ of $\GL_3$, we set 
$\nabla(\lambda) := H^0(\GL_3/B_{\GL_3}, \cL_{\lambda})$. 
For 
$o:=eP\in\GL_7/P$ and the natural projection 
\[
\pi : E := \Fl(4, 5, 6; 7) =\GL_7/Q \to \Gr(4, 7) = \GL_7/P, 
\]
we have $\cQ_o \simeq \nabla(1, 0, 0)$. 

We say that a $\GL_3$-module $V$ has a
{\em good filtration} if there exists a sequence
$V=:F_0\supset F_1\supset\cdots\supset F_N=0$
of $\GL_3$-submodules such that
$F_i/F_{i+1}\simeq\nabla(\lambda_i)$ for every $i$, 
where each $\lambda_i$ is a dominant weight of $\GL_3$.
Tensor products of $\GL_3$-modules with good
filtrations again have good filtrations
\cite[Corollary 6.3]{Mat00}.
Consequently, the $\GL_3$-module
$\cQ_o^{\otimes k}$  has a good filtration.
As $\cQ_o^{\otimes k}$ is a polynomial
$\GL_3$-module,
any weight $\mu = (\mu_1, \mu_2, \mu_3)$ of $\cQ_o^{\otimes k}$ satisfies 
$\mu_i\geq0$ for every $i$. % \cite[(3.2c)]{Gre07}.
Then this property also holds for the filtration quotients,
and hence their highest weights
$\lambda=(\lambda_1,\lambda_2,\lambda_3)$ satisfy
$\lambda_1\geq \lambda_2\geq\lambda_3\geq0$. 
By standard argument, we have 
the following $P$-module isomorphisms 
for the pullback
$\lambda'\in X(Q)$ of
$\lambda\in X(B_{\GL_3})$: 
\[
\bigl(\pi_*\cL_{\lambda'}\bigr)_o
\simeq
H^0(P/Q,\cL_{\lambda'}|_{P/Q})
\simeq
H^0(\GL_3/B_{\GL_3},\cL_\lambda)
=
\nabla(\lambda). 
\]
Corresponding to the good filtration
$\cQ^{\otimes k}_o=:F_0\supset F_1
\supset\cdots\supset F_N=0$,
we get a sequence of locally free $\MO_{\Gr(4, 7)}$-submodules
$\cQ^{\otimes k}=:\cF_0\supset\cF_1
\supset\cdots\supset\cF_N=0$
whose graded quotients are of the form
$\pi_*\cL_{\lambda'}$. 
By the Kodaira vanishing on $E = \Fl(4, 5, 6; 7)$, 
it is enough to show that each $\cL_{\lambda'}$
is nef, which can be checked by using $\lambda_1\geq \lambda_2\geq\lambda_3\geq0$.

\medskip
\noindent {\bf Acknowledgements.}
The author was supported by JSPS KAKENHI Grant number JP22H01112 and JP23K03028. 
The author thanks Akihiro Kanemitsu for fruitful discussions. 
ChatGPT (OpenAI) was used during the preparation of this article for assistance with mathematical exploration, computations, and language editing.
\section{Preliminaries}

\subsection{Notation}\label{ss-notation}

In this subsection, we summarise notation used in this paper. 

\begin{enumerate}
\item We will freely use the notation and terminology in \cite{Har77} and \cite{KM98}. 
In particular, $D_1 \sim D_2$ means linear equivalence of Weil divisors. 
\item 
Throughout this paper, 
we work over an algebraically closed field $k$ 
of characteristic $p>0$ unless otherwise specified. 
\item For an integral scheme $X$, 
we define the {\em function field} $K(X)$ of $X$ 
as the local ring $\MO_{X, \xi}$ at the generic point $\xi$ of $X$. 
For an integral domain $A$, $K(A)$ denotes the function field of $\Spec\,A$. 

\item 
For a scheme $X$, its {\em reduced structure} $X_{\red}$ 
is the reduced closed subscheme of $X$ such that the induced closed immersion 
$X_{\red} \to X$ is surjective. 
\item 
Our notation will not distinguish between invertible sheaves and 
Cartier divisors. For example, we will write $L+D$ for 
an invertible sheaf $L$ and a Cartier divisor $D$. 
\item We say that $X$ is a {\em variety} (over $k$) if 
$X$ is a separated integral scheme which is of finite type over $k$. 
We say that $X$ is a {\em curve} (resp. a {\em surface}, resp. a {\em threefold})  
if $X$ is a variety over $k$ of dimension one (resp. {\em two}, resp. {\em three}). 
\item We say that $f: X \to Y$ is a {\em contraction} if $f$ is a projective morphism of schemes satisfying $f_*\MO_X = \MO_Y$. 
Here the equality $f_*\MO_X = \MO_Y$ means that the induced ring homomorphism $\MO_Y \to f_*\MO_X$ is an isomorphism. 

\item For a smooth projective variety $V$ satisfying  $\Pic V \simeq \Z$, 
$\MO_V(1)$ denotes the ample invertible sheaf which  generates $\Pic\,V (\simeq \Z)$ and 
we set $\MO_V(\ell) := \MO_V(1)^{\otimes \ell}$. 
\item 
An {\em algebraic group} is a smooth group scheme of finite type over $k$. 
We say that $G$ is a {\em reductive} (resp.\ {\em semisimple}) group if $G$ is a connected algebraic group 
whose unipotent radical $R_u(G)$ 
(resp.\ its radical $R(G)$) is trivial. 
We say that $F$ is a {\em flag variety} 
if $F \simeq G/P$ for some reductive group $G$ and a 
reduced parabolic subgroup $P$ of $G$. 
For some foundational results on algebraic groups and their representations, 
we refer to \cite{Spr98}, \cite{Mil17}, \cite{Jan03}.
\end{enumerate}

\subsection{Fano threefolds}

 We say that $X$ is a {\em Fano threefold}  
if $X$ is a three-dimensional smooth projective variety over $k$ such that $-K_X$ is ample. 
For a Fano threefold $X$, 
the following holds: 
\begin{enumerate}
\item 
The {\em index} $r_X$ of $X$ is the largest positive integer $r$
that divides $-K_X$ in $\Pic\,X$, and it satisfies
$1\leq r_X\leq4$ \cite[Theorem 2.18]{FanoI}.
\item $\Pic X \simeq \Z^{\oplus \rho(X)}$ \cite[Theorem 2.4]{FanoI}. 
\item 
For $m \geq 0$, we have 
\[
h^0(X, -mK_X) = \chi(X, -mK_X) = \frac{1}{12} m(m+1)(2m+1)(-K_X)^3 + 2m + 1
\]
by \cite[Corollary 2.6]{FanoI} and Theorem \ref{t Fano3 ANV} below. 
\end{enumerate}

\begin{thm}\label{t Fano3 ANV}
Let $X$ be a Fano threefold and let $A$ be an ample Cartier divisor. 
Then $H^j(X, \Omega_X^i(A))= 0$ for every $i+j >3$. 
\end{thm}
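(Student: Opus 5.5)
By Serre duality, $H^j(X,\Omega^i_X(A))$ is dual to $H^{3-j}(X,\Omega^{3-i}_X(-A))$. So the statement is equivalent to
\[
H^{j'}(X,\Omega^{i'}_X(-A))=0 \qquad\text{for } i'+j'<3.
\]
For a threefold this leaves three cases:
\begin{enumerate}
\item[(a)] $i'=0$, $j'\in\{0,1,2\}$, i.e.\ Kodaira vanishing $H^{<3}(X,\MO_X(-A))=0$;
\item[(b)] $i'=1$, $j'\in\{0,1\}$;
\item[(c)] $i'=2$, $j'=0$.
\end{enumerate}
I would treat them in order of difficulty, using that $-K_X$ is ample and the results cited just before the theorem: the Mori--Mukai type classification \cite{FanoI} and the liftability results of \cite{KTLift2}.

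For (a): $H^0(X,\MO_X(-A))=0$ is trivial. For $H^1$ and $H^2$ I would use the known Kodaira vanishing for smooth Fano threefolds in positive characteristic. This comes either from the classification, which shows that $X$ lifts to $W_2(k)$ so that Deligne--Illusie and Raynaud apply, or directly from \cite{KTLift2}. In fact, liftability of $X$ to $W_2(k)$ together with $\dim X=3<p$ would give full Akizuki--Nakano vanishing in one stroke. So my first attempt is to invoke liftability to $W_2(k)$ wherever it is available, and to fall back on the case-by-case arguments below only for small $p$.

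For (c) and $j'=0$ in (b), I need $H^0(X,\Omega^2_X(-A))=0$ and $H^0(X,\Omega^1_X(-A))=0$, and I would use rational curves. A smooth Fano threefold is covered by rational curves $C$ with $-K_X\cdot C\le 4$ (lines or conics), and the classification lets one choose a family whose general member is free. For a free curve,
\[
\Omega^1_X|_C\simeq\MO_{\P^1}(a_1)\oplus\MO_{\P^1}(a_2)\oplus\MO_{\P^1}(a_3), \qquad a_i\le 0 .
\]
Hence both $\Omega^1_X(-A)|_C$ and $\wedge^2\Omega^1_X(-A)|_C$ have only negative summands. Any global section therefore vanishes along a covering family of curves, and so vanishes identically.

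The main obstacle is $H^1(X,\Omega^1_X(-A))=0$, the case $(i',j')=(1,1)$ of (b). Here I would take a general smooth member $S\in|mA|$ with $m\gg0$, available by Bertini. From the sequences
\[
0\to\Omega^1_X(-A-S)\to\Omega^1_X(-A)\to\Omega^1_X(-A)|_S\to0, \qquad 0\to\MO_S(-A-S)\to\Omega^1_X(-A)|_S\to\Omega^1_S(-A)\to0,
\]
the vanishing reduces to two inputs. The first is the Serre-dual statement $H^2(X,\Omega^2_X(A+S))=0$ for large twists, which is Serre vanishing. The second is $H^1(S,\Omega^1_S(-A|_S))=0$ together with $H^1(S,\MO_S(-A-S))=0$, which follows from (a) and the restriction sequence. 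The real difficulty is the vanishing on the surface $S$, which need not be of Fano type. If this fails, I would instead argue case by case via the classification: for prime Fano threefolds, use the embeddings as linear sections of homogeneous varieties and the Koszul and conormal sequences as in the overview; for $\rho(X)\ge2$ or index $\ge2$, use the blow-up or conic bundle descriptions as in \cite{KTLift2}.
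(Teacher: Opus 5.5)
The paper gives no argument of its own here; it quotes the vanishing from \cite{KTLift1} and \cite{KTLift2}. Your Serre-duality reformulation, $H^{j'}(X,\Omega^{i'}_X(-A))=0$ for $i'+j'<3$, is correct. Your first route (lift to $W_2(k)$, then apply Deligne--Illusie) amounts to quoting that same literature: the liftability is a deep input, not a formal consequence of the classification in \cite{FanoI}. Moreover, Deligne--Illusie gives vanishing only for $i'+j'<\min(\dim X,p)$. That covers $p=3$ too, since the hypothesis is $\dim X\le p$, not $<p$. But for $p=2$ it does not reach $(i',j')=(1,1)$ or $(2,0)$, whereas the theorem is asserted for every $p$. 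So in characteristic $2$ everything rests on your fallback arguments, and these have a genuine gap.

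The decisive failure is at $(1,1)$. For $m\gg0$, Serre duality and Serre vanishing kill both $H^1(X,\Omega^1_X(-A-S))$ and $H^2(X,\Omega^1_X(-A-S))$. So your first sequence gives an \emph{isomorphism} $H^1(X,\Omega^1_X(-A))\simeq H^1(S,\Omega^1_X(-A)|_S)$. The second sequence identifies the latter with the kernel of the connecting map $H^1(S,\Omega^1_S(-A|_S))\to H^2(S,\MO_S(-A-S))$. Thus nothing has been reduced.

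The sufficient condition you propose, $H^1(S,\Omega^1_S(-A|_S))=0$, is the middle-degree case on a surface of general type. Akizuki--Nakano does not give it even in characteristic $0$, and it is false in general. Take $X=\P^3$, $A$ a plane and $S=\{f=0\}$ smooth of degree $m\geq3$. Bott vanishing, together with the conormal and Euler sequences, identifies $H^1(S,\Omega^1_S(-1))^\vee$ with the cokernel of $H^0(S,\MO_S(m-2))^{\oplus4}\to H^0(S,\MO_S(2m-3))$, $(g_i)\mapsto\sum_i g_i\,\partial f/\partial x_i$. In characteristic $0$ this is the degree-$(2m-3)$ part of the Jacobian ring of $f$, which is nonzero. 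Lifting $f$ to $W(k)$ and using semicontinuity gives nonvanishing in characteristic $p$ as well. For $m=3$ it is just $H^1(S,T_S)$, which has dimension $\geq 4$ in every characteristic.

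Your replacement, ``case by case via the classification'', is only named and not carried out. For prime Fano threefolds, the linear-section description you mention is available only in genera $7$--$10$. Finally, the $H^0$ cases need a dominating family of \emph{free} rational curves, i.e.\ separable uniruledness. This is not automatic in positive characteristic, and you merely assert it.
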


\begin{proof}
The assertion follows from \cite{KTLift1} and \cite{KTLift2}. 
\end{proof}

We say that $X$ is a {\em prime Fano threefold} 
if $X$ is a Fano threefold such that $\Pic X  = \Z K_X$. 
For a prime Fano threefold $X$, 
the integer $g$ defined by $(-K_X)^3 = 2g -2$ is called the {\em genus} of $X$. 
It is well known that the following hold  \cite[Theorem 1.1]{FanoI}, \cite[Theorem 1.2]{FanoII}: 
\begin{enumerate}
\item[(i)] $2 \leq g \leq 12$ and $g \neq 11$. 
\item[(ii)] 
If $|-K_X|$ is very ample, then $g \geq 3$. 
If $g \geq 4$, then $|-K_X|$ is very ample. 
\end{enumerate}
If $X$ is a prime Fano threefold and $|-K_X|$ is very ample, 
then $X$ has a concrete description of Mukai type 
\cite[Theorem 1.2]{KTprime} ($g \geq 6$), \cite[Proposition 2.8]{FanoII} ($g \leq 5$).

\begin{prop}\label{p GFS criterion}
Let $X$ be a Fano threefold. 
Then the following hold: 
\begin{enumerate}
\item If $H^2(X, \Omega_X^1(-p^rK_X))=0$ for every $r>0$, 
then $X$ is quasi-$F$-split. 
\item If $H^2(X, \Omega_X^1(-pK_X)) =H^1(X, \Omega_X^2(-pK_X))=0$, then $X$ is $F$-split. 
\end{enumerate}
\end{prop}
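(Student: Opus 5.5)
We prove both criteria through the Cartier operator formalism, using the standard sequences built from $B_r\Omega^1_X$ and the Witt-vector description of quasi-$F$-splitting from \cite{KTLift2}. Fix $p$ and write $F: X \to X$ for the absolute Frobenius. For (2), $X$ is $F$-split if and only if the evaluation map $\Hom(F_*\MO_X, \MO_X) \to H^0(X,\MO_X)$ is surjective. By Grothendieck duality this is equivalent to the vanishing of a connecting map. Set $B_1 := B\Omega^1_X = F_*\MO_X/\MO_X$ (via $d$). The sequence $0 \to \MO_X \to F_*\MO_X \to B_1 \to 0$ shows that the splitting exists if the class of this extension in $\mathrm{Ext}^1(B_1,\MO_X)$ vanishes. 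A sufficient condition is $\mathrm{Ext}^1(B_1, \MO_X) = 0$, which by Serre duality is $H^2(X, B_1 \otimes \omega_X)^\vee$. So the goal for (2) is $H^2(X, B_1(K_X)) = 0$.

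To obtain this vanishing, we use the two exact sequences
\[
0 \to B_1 \to Z_1 \xrightarrow{C} \Omega^1_X \to 0, \qquad 0 \to Z_1 \to F_*\Omega^1_X \xrightarrow{d} F_*\Omega^2_X \to B_2\Omega^2 \to 0.
\]
Twisting by $K_X$ and using the projection formula $F_*(\mathcal{E})\otimes \MO(K_X) = F_*(\mathcal{E}(pK_X))$, we have $H^j(X, F_*\Omega^i_X \otimes \omega_X) = H^j(X, \Omega^i_X(pK_X))$. By Serre duality these dualize to $H^{3-j}(X, \Omega^{3-i}_X(-(p-1)K_X)\otimes\ldots)$. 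A cleaner route is to work directly with $-K_X$ twists: twist by $\omega_X^{-1}$-powers so that the relevant groups become $H^1(X, \Omega^2_X(-pK_X))$ and $H^2(X, \Omega^1_X(-pK_X))$. The hypotheses then kill the obstruction. Concretely, we chase $H^2(B_1(K_X)) \leftarrow H^1(\Omega^1_X(K_X))$ and $H^2(Z_1(K_X))$. The first term is controlled by Theorem \ref{t Fano3 ANV} and Kodaira-type vanishing. The second term injects, via the four-term sequence split into two short ones, into terms involving $H^2(\Omega^1_X(pK_X))$ and $H^1(\Omega^2_X(pK_X))$. By Serre duality these are dual to $H^1(\Omega^2_X(-pK_X))$ and $H^2(\Omega^1_X(-pK_X))$, which vanish by hypothesis. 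The remaining terms, such as $H^3(\Omega^2_X(pK_X))^\vee = H^0(\Omega^1_X(-pK_X))$-type pieces, should appear only on the harmless side of the long exact sequences. Theorem \ref{t Fano3 ANV} handles any $H^j(\Omega^i(A))$ with $i+j>3$ that shows up.

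For (1), we use the criterion of \cite{KTLift2}. $X$ is $n$-quasi-$F$-split if the relevant map $H^3(X, \omega_X) \to H^3(X, Q_{X,n}\otimes\omega_X)$ is injective. Its kernel is filtered by images of $H^2(X, B_r\Omega^1_X(K_X))$ for $r \le n$, via the sequences $0 \to F_*B_r \to B_{r+1} \xrightarrow{C^r} B_1 \to 0$. So it suffices to show $H^2(X, B_r(K_X)) = 0$ for some or all $r$, or at least that the obstruction dies at some finite level. By induction on $r$, using $0 \to B_r \to Z_r \to \Omega^1_X \to 0$ and $0\to Z_{r+1} \to F_*Z_r \to F_*B_1\otimes\ldots$, the twists become $p^rK_X$. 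Serre duality converts the conditions into $H^2(X, \Omega^1_X(-p^rK_X))=0$, together with $H^1(\Omega^1_X(K_X))=0$-type terms handled by Theorem \ref{t Fano3 ANV}. Note that no $\Omega^2$ hypothesis is needed here. Quasi-$F$-splitting only needs the obstruction to vanish eventually rather than at level one, and the $H^1(\Omega^2)$ contribution appears only at the first step, where it can be absorbed by passing to higher $r$.

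The main obstacle is this last point: bookkeeping the diagram chase precisely so that only $H^2(\Omega^1_X(-p^rK_X))$ appears in (1). I would follow the formulation in \cite{KTLift2}, where a lemma of this shape (via $B_r$ and $Z_r$) is stated for Fano varieties of any dimension, and specialize it to threefolds using Theorem \ref{t Fano3 ANV}.
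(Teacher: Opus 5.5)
There is a genuine gap in (1), at exactly the point you flag as the main obstacle, and the explanation you offer for it is not correct.

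Run your chase at level $r$. Use $H^1(X,\Omega^1_X(K_X))=0$ and the sequences $0\to Z_{j+1}\Omega^1_X\to F_*Z_j\Omega^1_X\to B_1\Omega^2_X\to 0$, with $Z_0\Omega^1_X:=\Omega^1_X$. This gives injections
\[
H^2(X,B_r\Omega^1_X\otimes\omega_X)\hookrightarrow H^2(X,Z_r\Omega^1_X\otimes\omega_X)\hookrightarrow H^2(X,F^r_*\Omega^1_X\otimes\omega_X)=H^2(X,\Omega^1_X(p^rK_X)),
\]
provided $H^1(X,B_1\Omega^2_X\otimes\omega_X^{\otimes p^j})=0$ for $0\le j<r$. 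That vanishing follows from three facts: $H^0(X,\Omega^2_X(p^jK_X))=0$, $H^0(X,\omega_X^{\otimes(1+p^{j+1})})=0$, and $H^1(X,\Omega^2_X(p^{j+1}K_X))=0$. The last is Serre dual to your hypothesis.

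The final group $H^2(X,\Omega^1_X(p^rK_X))\simeq H^1(X,\Omega^2_X(-p^rK_X))^\vee$ is covered by no hypothesis. It sits at the end of the chase for every $r$, not ``only at the first step,'' and nothing ``absorbs'' it. The missing idea is Serre vanishing: since $-K_X$ is ample, $H^1(X,\Omega^2_X(-p^rK_X))=0$ for $r\gg0$. Only then do you get $H^2(X,B_r\Omega^1_X\otimes\omega_X)=0$, and hence $r$-quasi-$F$-splitting via $Q_{X,r}/\MO_X\simeq B_r\Omega^1_X$.

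This works only for large $r$, not ``for some or all $r$.'' At $r=1$ the last group is $H^1(X,\Omega^2_X(-pK_X))^\vee$, which is exactly why (2) carries that extra hypothesis. Also, the kernel of $H^3(X,\omega_X)\to H^3(X,Q_{X,n}\otimes\omega_X)$ is simply the image of $H^2(X,B_n\Omega^1_X\otimes\omega_X)$; no filtration is involved.

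For (2), your outline is the standard argument and is fine in substance, but some terms are misidentified. The cokernel of $d\colon F_*\Omega^1_X\to F_*\Omega^2_X$ is $F_*\Omega^2_X/B_1\Omega^2_X$, an extension of $B_1\Omega^3_X$ by $\Omega^2_X$; it is not ``$B_2\Omega^2$.'' So the auxiliary groups that must vanish are $H^0(X,\Omega^2_X(K_X))$ and $H^0(X,B_1\Omega^3_X\otimes\omega_X)\subset H^0(X,\omega_X^{\otimes(p+1)})=0$. Groups like $H^0(X,\Omega^1_X(-pK_X))$ never appear, and they would not vanish.

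The auxiliary vanishings $H^1(X,\Omega^1_X(K_X))=0$ and $H^0(X,\Omega^2_X(mK_X))=0$ for $m>0$ are exactly what the paper imports from \cite[Theorem B]{KTLift1} before invoking \cite[Propositions 2.15 and 2.20]{KTLift2}. You can indeed get them from Theorem \ref{t Fano3 ANV} by Serre duality, since they are dual to $H^2(X,\Omega^2_X(-K_X))$ and $H^3(X,\Omega^1_X(-mK_X))$.
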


\begin{proof}
It holds that 
$H^1(X, \Omega_X^1(K_X))=0$ and 
$H^0(X, \Omega_X^2(mK_X))=0$
for every $m>0$ \cite[Theorem B]{KTLift1}. 
Then the assertion (1) (resp. (2)) 
follows from \cite[Proposition 2.20]{KTLift2} (resp. 
\cite[Proposition 2.15]{KTLift2}).  
\end{proof}

\subsection{Cohomologies of line bundles on flag varieties}

\begin{dfn}
Let $G$ be a connected reductive algebraic group, let $T\subset B\subset G$ be a maximal torus and a Borel subgroup. 
Fix a system $\Phi^+$ of positive roots such that 
$B$ is the corresponding negative Borel subgroup. 
Set
$\rho
:=
\frac{1}{2}
\sum_{\alpha\in\Phi^+}\alpha$. 
For the Weyl group $W:=N_G(T)/T$, 
the {\em dot action} of $W$ on the weight lattice $X(T)$ is defined by 
\[
w\cdot\lambda
:=
w(\lambda+\rho)-\rho
\]
for $w\in W$ and $\lambda\in X(T)$.
\end{dfn}

\noindent
For a simple root $\alpha$ and the corresponding simple reflection $s_\alpha$, we have
\[
s_\alpha\cdot\lambda
=
\lambda
-
(
\left\langle
\lambda,\alpha^\vee
\right\rangle
+1
)
\alpha.
\]

\begin{prop}\label{p positive characteristic reflection formula}
Let $G$ be a reductive algebraic group.
Fix a maximal torus $T\subset B\subset G$, where $B$ is a Borel subgroup, and set $F:=G/B$. 
For a weight $\lambda\in X(T)$, let $\cL_\lambda$ be the corresponding line bundle on $F$.
Let $\alpha$ be a simple root and let $s_\alpha$ be the corresponding simple reflection.
Then the following hold: 
\begin{enumerate}
\item
If $-p
\leq
\left\langle
\lambda,\alpha^\vee
\right\rangle
\leq
-2$, then $H^i(
F,
\cL_\lambda
)
\simeq
H^{i-1}(
F,
\cL_{s_\alpha\cdot\lambda}
)$ for every integer $i$. 
\item
If $\left\langle
\lambda,\alpha^\vee
\right\rangle
=
-1,$ then $H^i(
F,
\cL_\lambda
)
=
0$ for every integer $i$.
\end{enumerate}
\end{prop}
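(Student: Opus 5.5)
The natural route is to reduce to the rank-one case via the minimal parabolic $P_\alpha \supset B$ associated with $\alpha$. Consider the projection $\pi: F = G/B \to G/P_\alpha$. It is a $\P^1$-bundle, and its fibre $P_\alpha/B \simeq \P^1$ carries the restriction of $\cL_\lambda$ as $\cO_{\P^1}(\langle \lambda, \alpha^\vee\rangle)$. The Leray spectral sequence then gives
\[
H^i(F,\cL_\lambda) \simeq H^{i-1}(G/P_\alpha, R^1\pi_*\cL_\lambda)
\]
whenever $\pi_*\cL_\lambda = 0$, and $H^i(F,\cL_\mu)\simeq H^i(G/P_\alpha,\pi_*\cL_\mu)$ whenever $R^1\pi_*\cL_\mu=0$.

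Assertion (2) is immediate. If $\langle \lambda,\alpha^\vee\rangle=-1$, the restriction to every fibre is $\cO_{\P^1}(-1)$, which has no cohomology. By cohomology and base change, $R\pi_*\cL_\lambda=0$, so $H^i(F,\cL_\lambda)=0$ for every $i$.

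For (1), set $n:=\langle\lambda,\alpha^\vee\rangle$ with $-p\le n\le -2$. Then $\mu:=s_\alpha\cdot\lambda$ satisfies $\langle \mu,\alpha^\vee\rangle = -n-2 \in [0,p-2]$. Hence $\pi_*\cL_\lambda=0$ and $R^1\pi_*\cL_\mu=0$, and it suffices to construct an isomorphism of vector bundles on $G/P_\alpha$:
\[
R^1\pi_*\cL_\lambda \simeq \pi_*\cL_\mu.
\]
Both are $G$-equivariant bundles $G\times^{P_\alpha} V$, with fibres the $P_\alpha$-modules
\[
H^1(P_\alpha/B,\cL_\lambda) \quad\text{and}\quad H^0(P_\alpha/B,\cL_\mu).
\]
So the question becomes an isomorphism of $P_\alpha$-modules. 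Serre duality on $P_\alpha/B\simeq\P^1$, whose canonical bundle is $\cL_{-\alpha}$ with the appropriate sign convention, gives
\[
H^1(P_\alpha/B,\cL_\lambda)\simeq H^0(P_\alpha/B,\cL_{-\lambda-\alpha})^\vee.
\]
This reduces the problem to the rank-one comparison $H^0(\cL_{-\lambda-\alpha})^\vee \simeq H^0(\cL_{s_\alpha\cdot\lambda})$ as $L_\alpha$-modules, where $L_\alpha$ is the Levi factor, since the unipotent radical acts trivially on both sides.

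This rank-one comparison is the main obstacle, and it is exactly where the bound $-p\le n$ enters. The module $H^0(\cL_{\mu})$ is a dual Weyl module of highest weight $\mu$ for the $\SL_2$ (or $\GL_2$) part of $L_\alpha$, with $\alpha$-weight $m:=-n-2\le p-2$. In this restricted range, $\nabla(m)$ is simple and equals $L(m)=\Delta(m)$, because no linkage occurs below $p-1$. Hence its dual is again simple, and it is determined by its highest weight.

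The step I would carry out is a weight computation. The weights of $H^0(\cL_{-\lambda-\alpha})^\vee$ are the negatives of the weights of a module whose lowest weight is $-\lambda-\alpha$ up to the Borel-convention sign, which shows that its highest weight is $\lambda+\alpha + (n+1)\alpha = s_\alpha\cdot\lambda$. The torus parts on the centre of $L_\alpha$ also agree. Since both sides are simple $L_\alpha$-modules with the same highest weight, they are isomorphic. Outside the range $-p\le n\le -2$ this argument breaks down, because $\nabla(m)$ need not be self-dual up to twist, which explains the hypothesis.
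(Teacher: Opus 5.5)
Your argument is correct and is the standard proof behind the paper's one-line citation of \cite[Proposition II.5.4]{Jan03}, parts (a) and (d). It uses Leray for the $\P^1$-bundle $G/B\to G/P_\alpha$, equivariant Serre duality on $P_\alpha/B$, and simplicity of $H^0(P_\alpha/B,\cL_{s_\alpha\cdot\lambda})$ when $\langle s_\alpha\cdot\lambda,\alpha^\vee\rangle\leq p-2$ to identify $R^1\pi_*\cL_\lambda\simeq\pi_*\cL_{s_\alpha\cdot\lambda}$. The only slip is arithmetic: with $m=-n-2$, the highest weight of $H^0(P_\alpha/B,\cL_{-\lambda-\alpha})^\vee$ is $\lambda+\alpha+m\alpha=\lambda-(n+1)\alpha=s_\alpha\cdot\lambda$, not $\lambda+\alpha+(n+1)\alpha$ as you wrote, and your conclusion is unaffected.
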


\begin{proof}
The assertion (1) and (2) follow from 
(d) and (a) of \cite[Proposition II.5.4]{Jan03}, respectively. 
\end{proof}

\begin{prop}\label{p flag Kodaira}
Let $X$ be a flag variety and let 
$f : X \to Y$ be a morphism to a projective scheme $Y$. 
Let $L$ be a Cartier divisor 
such that $L-K_X$ is $f$-ample or $L$ is $f$-nef. 
Then $R^if_*\MO_X(L) = 0$ for every $i>0$. 
\end{prop}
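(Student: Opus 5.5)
The plan is to deduce the statement from Frobenius splitting of flag varieties, combined with the Kodaira-type vanishing this splitting yields in the relative setting. The precise input is the theorem of Mehta--Ramanathan: $X=G/P$ is $F$-split, and the splitting is compatible with the Bruhat decomposition. Better still, $X$ admits a splitting given by a $(p-1)$-st power of a section of $\omega_X^{-1}$, so that $X$ is globally $F$-regular. This is stronger than plain $F$-splitting and is what the relative statement requires.

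We treat the two hypotheses in turn.

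\textbf{The case $L$ is $f$-nef.} Fix an $f$-ample divisor $A$ on $X$. Because $X$ is $F$-split, the map $\MO_X(L)\to F^e_*\MO_X(p^eL)$ splits, and so does its pushforward under $f$. This gives an injection
\[
R^if_*\MO_X(L)\hookrightarrow R^if_*\MO_X(p^eL)
\]
for every $e\geq 1$. The cohomology does not vanish directly from this, since $p^eL$ is merely $f$-nef. To fix this, we use global $F$-regularity in the form: for an effective divisor $D$, the map $\MO_X\to F^e_*\MO_X(D)$ splits for $e\gg0$. Take $D$ in $|mA|$ for suitable $m$; this is possible after twisting by a pullback of a sufficiently ample divisor on $Y$, which is harmless because the question is local on $Y$. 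We then get an injection
\[
R^if_*\MO_X(L)\hookrightarrow R^if_*\MO_X(p^eL+mA).
\]
Since $p^eL+mA$ is $f$-ample, the right-hand side vanishes by relative Serre vanishing once $e\gg0$ with $m$ fixed. This gives $R^if_*\MO_X(L)=0$ for $i>0$.

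\textbf{The case $L-K_X$ is $f$-ample.} Here we use duality together with the $F$-splitting. By the relative version of the Mehta--Ramanathan argument, the splitting by $(p-1)$-powers implies that $\MO_X(L)\to F^e_*\MO_X(p^eL+(1-p^e)K_X)$ splits. Put $L=K_X+N$ with $N$ $f$-ample. Then
\[
p^eL+(1-p^e)K_X=K_X+p^eN,
\]
and $R^if_*\MO_X(K_X+p^eN)=0$ for $e\gg0$ by relative Serre vanishing. The splitting again injects $R^if_*\MO_X(L)$ into this group, so it vanishes.

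\textbf{Main obstacle.} The key point is to justify that the $F$-splitting of $G/P$ is given by a $(p-1)$-st power of a section of $-K_X$. This is the standard result of Mehta--Ramanathan and Ramanan--Ramanathan, and we cite it. The one subtlety is working relatively over $Y$. This is handled by noting that the splitting is global on $X$, so it induces splittings of the relevant maps after applying $R^if_*$. Alternatively, we can reduce to absolute vanishing on $X$ by twisting with $f^*H$ for $H$ sufficiently ample on $Y$, applying the Leray spectral sequence and Serre vanishing on $Y$, and then invoking the absolute Kodaira vanishing $H^i(X,\MO_X(L+f^*H))=0$ for nef or $(-K_X)$-twisted ample line bundles on flag varieties.
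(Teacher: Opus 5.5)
\textbf{Case $L-K_X$ $f$-ample.} This half of your proposal is correct. Since $X$ is split by $\sigma^{p-1}$ with $\sigma\in H^0(X,\omega_X^{-1})$, the map $\MO_X(L)\to F^e_*\MO_X(K_X+p^eN)$ splits. Relative Serre vanishing in $e$ then finishes, because the $f$-ample divisor $N$ is the one being multiplied. This is the Mehta--Ramanathan argument behind \cite[Theorem 1 in page 38]{MR85}, which the paper cites, so here you match the paper.

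\textbf{Case $L$ $f$-nef.} Here the last step fails as written. You obtain an injection $R^if_*\MO_X(L)\hookrightarrow F^e_{Y*}R^if_*\MO_X(p^eL+mA)$, where $m$ is fixed and $e$ is large enough for the splitting along $D\in|mA|$. You then invoke relative Serre vanishing ``once $e\gg0$ with $m$ fixed''. But Serre vanishing needs the $f$-ample twist to grow, and here the growing part $p^eL$ is only $f$-nef. For example, if $L=f^*M$ then $R^if_*\MO_X(p^eL+mA)\simeq R^if_*\MO_X(mA)\otimes\MO_Y(p^eM)$. Its vanishing is then exactly the vanishing of $R^if_*\MO_X(mA)$ for your fixed $m$, which Serre vanishing gives only for $m\gg0$, and $m$ was never chosen with that in mind.

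There are two repairs. You can iterate the $F$-splitting once more on the $f$-ample divisor $M':=p^eL+mA$: this gives $R^if_*\MO_X(M')\hookrightarrow F^{e'}_{Y*}R^if_*\MO_X(p^{e'}M')$, which vanishes for $e'\gg0$. Alternatively, you can choose $m$ large via a Fujita-type vanishing.

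The whole nef case is in fact unnecessary. On a flag variety $-K_X$ is ample, so an $f$-nef $L$ has $L-K_X$ $f$-ample, and your first case applies directly. That one-line reduction is exactly what the paper does. It also removes the need for global $F$-regularity, which you only asserted follows from the $\sigma^{p-1}$-splitting. That deduction needs an extra argument, for instance Smith's criterion using that $X\setminus\mathrm{div}(\sigma)$ is smooth and affine.
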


\begin{proof}
Since $-K_X$ is ample, we may assume that $L-K_X$ is $f$-ample. 
In this case, the assertion follows from \cite[Theorem 1 in page 38]{MR85}. 
\end{proof}

\begin{prop}\label{p almost ample general}
Let $G$ be a reductive algebraic group and let
$P\subset Q\subset G$ be reduced parabolic subgroups.
Let $\pi:G/P\to  G/Q$ be the natural projection. 
Let $M$ be an ample Cartier divisor on $G/Q$ and set
$L:=\pi^*M.$ 
Then
\[
H^i(
G/P,
K_{G/P}+L
)=0 \qquad \text{and} \qquad H^d(
G/P,
K_{G/P}+L
)
\simeq
H^0(
G/Q,
K_{G/Q}+M
)
\]
for every $i\neq d$, where $d:=\dim(Q/P)$.
\end{prop}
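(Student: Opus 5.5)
We prove Proposition \ref{p almost ample general} by pushing $K_{G/P}+L$ forward along $\pi$ and using the Leray spectral sequence. The fibres of $\pi$ are all isomorphic to $Q/P$, which is again a flag variety: it is $Q_{\red}$-homogeneous, and modulo the unipotent radical it is $L_Q/(P\cap L_Q)$ for a Levi factor $L_Q$. Moreover, $\pi$ is a smooth projective morphism with connected fibres, and in fact a Zariski locally trivial fibration with fibre $Q/P$.

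\textbf{Step 1: relative canonical divisor.} We have $K_{G/P}=\pi^*K_{G/Q}+K_{G/P/G/Q}$, and $K_{G/P/G/Q}$ restricts to $K_{Q/P}$ on every fibre. Since $-K_{Q/P}$ is ample, $-K_{G/P/G/Q}$ is $\pi$-ample.

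\textbf{Step 2: higher direct images.} The projection formula gives
\[
R^j\pi_*\MO_{G/P}(K_{G/P}+L)\simeq R^j\pi_*\omega_{G/P/G/Q}\otimes \MO_{G/Q}(K_{G/Q}+M).
\]
We compute $R^j\pi_*\omega_{G/P/G/Q}$ fibrewise. By Serre duality on $Q/P$, $H^j(Q/P,\omega_{Q/P})\simeq H^{d-j}(Q/P,\MO)^\vee$. The flag variety $Q/P$ has $H^{>0}(\MO)=0$ by Proposition \ref{p flag Kodaira} with $L=0$ (which is nef), and $H^0(\MO)=k$. So the cohomology is $k$ in degree $d$ and $0$ otherwise. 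Cohomology and base change (the fibre dimensions are constant) then gives $R^j\pi_*\omega_{G/P/G/Q}=0$ for $j\neq d$, and $R^d\pi_*\omega_{G/P/G/Q}$ is a line bundle. Relative Grothendieck duality identifies the trace $R^d\pi_*\omega_{\pi}\to\MO_{G/Q}$ as an isomorphism. Alternatively, one can note that this line bundle is $G$-equivariant and that the $Q$-action on the one-dimensional space $H^d(Q/P,\omega_{Q/P})$ is trivial by duality with $H^0(\MO)$.

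\textbf{Step 3: Leray.} The Leray spectral sequence degenerates, so
\[
H^i(G/P,K_{G/P}+L)\simeq H^{i-d}(G/Q,K_{G/Q}+M).
\]
Proposition \ref{p flag Kodaira} on $G/Q$ with $Y=\Spec k$ applies because $(K_{G/Q}+M)-K_{G/Q}=M$ is ample. It kills all $H^{i-d}$ with $i-d>0$, and $H^{i-d}=0$ trivially for $i<d$. This gives both claims.

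The main obstacle is Step 2: showing that $R^d\pi_*\omega_\pi$ is canonically $\MO_{G/Q}$, with no twist by a nontrivial character of $Q$. The plan is to use the trace map of Grothendieck duality for the smooth proper morphism $\pi$ with geometrically connected fibres. Failing that, we argue via equivariance: the character must be trivial because it is dual to the trivial representation on $H^0(Q/P,\MO)$.
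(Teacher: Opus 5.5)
Your proof is correct. It differs from the paper's proof in where duality is applied.

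The paper dualizes globally first. It applies Serre duality on $G/P$ to pass to $H^{\dim G/P-i}(G/P,-\pi^*M)^\vee$. It then uses the projection formula together with $R\pi_*\MO_{G/P}\simeq\MO_{G/Q}$ to move to $H^{\dim G/P-i}(G/Q,-M)^\vee$. This isomorphism is immediate from Proposition \ref{p flag Kodaira} applied to $f=\pi$ with $L=0$, plus connectedness of the fibres. Finally, Serre duality on $G/Q$ lands on $H^{i-d}(G/Q,K_{G/Q}+M)$, which Proposition \ref{p flag Kodaira} kills for $i>d$.

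You instead push $\omega_{G/P}(L)$ forward directly. That forces you to identify $R\pi_*\omega_{G/P/G/Q}\simeq\MO_{G/Q}[-d]$, which is exactly the Grothendieck dual of $R\pi_*\MO_{G/P}\simeq\MO_{G/Q}$. Both of your justifications for this are sound: the trace isomorphism from relative duality, and the equivariance argument via the $Q$-character on $H^d(Q/P,\omega_{Q/P})$.

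The paper's ordering makes what you call the ``main obstacle'' disappear. By dualizing on the total space and the base, it only ever needs the easy statement $R\pi_*\MO_{G/P}\simeq\MO_{G/Q}$, and never needs a trace map or a character computation. Your route, in exchange, gives the slightly stronger sheaf-level statement $R\pi_*\MO_{G/P}(K_{G/P}+L)\simeq\MO_{G/Q}(K_{G/Q}+M)[-d]$ before taking cohomology.

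Two minor simplifications. Step 1 (the $\pi$-ampleness of $-K_{G/P/G/Q}$) is never used. The fibrewise computation followed by cohomology and base change could be replaced by applying Proposition \ref{p flag Kodaira} to $\pi$ directly, and then invoking duality.
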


\begin{proof}
The assertion follows from 
Proposition \ref{p flag Kodaira} and 
\begin{align*}
H^i(
G/P,
K_{G/P}+L
) 
&\simeq 
H^{\dim G/P -i}(G/P, -\pi^*M)^\vee\\
&\simeq H^{\dim G/P -i}(G/Q, -M)^\vee\\
&\simeq H^{\dim G/Q -(\dim G/P -i)} (G/Q, K_{G/Q} +M)\\
&= H^{i-d}(G/Q, K_{G/Q} +M). 
\end{align*}

\end{proof}

\begin{prop}\label{p almost ample}
Let $F:=G/B$, where 
$G$ is a semisimple group and $B$ is a Borel subgroup. 
Let $\omega_0, ..., \omega_n$ be the fundamental weights. 
Take positive integers $a_1, ..., a_n$. 
For the Cartier divisor $L := L_{a_1 \omega_1+ \cdots a_n\omega_n}$ 
corresponding to the weight $a_1 \omega_1+ \cdots a_n\omega_n$, 
\[
H^i(F, K_F + L)=0
\]
for every $i \neq 1$. 
\end{prop}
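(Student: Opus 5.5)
The statement lists fundamental weights $\omega_0,\ldots,\omega_n$ but lets $L$ involve only $\omega_1,\ldots,\omega_n$. I read this as: $L$ has degree zero against the simple root $\alpha_0$ corresponding to $\omega_0$, and positive degree against all the others. If instead every fundamental weight had a positive coefficient, $L$ would be ample. Kodaira vanishing (Proposition \ref{p flag Kodaira}) would then give $H^i(F,K_F+L)=0$ for all $i>0$, while $H^0$ can be nonzero, so the exceptional index $1$ would not make sense. The plan is therefore to realise $L$ as a pullback from a $\P^1$-fibration and apply Proposition \ref{p almost ample general} with $d=1$.

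\textbf{Step 1 (the fibration).} Let $Q\supset B$ be the minimal parabolic subgroup associated with the simple root $\alpha_0$, and let $\pi:F=G/B\to G/Q$ be the natural projection. Then $Q/B\simeq\P^1$, so $d:=\dim(Q/B)=1$.

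\textbf{Step 2 (the weight descends).} Put $\lambda:=a_1\omega_1+\cdots+a_n\omega_n$. It satisfies $\langle\lambda,\alpha_0^\vee\rangle=0$.
\begin{itemize}
\item The character group of $Q$ is identified with the weights orthogonal to $\alpha_0^\vee$. Hence $\lambda$ extends to a character of $Q$.
\item Consequently $\cL_\lambda\simeq\pi^*\cM_\lambda$, where $\cM_\lambda$ is the homogeneous line bundle on $G/Q$ attached to $\lambda$.
\item Equivalently, $\cL_\lambda$ is trivial on the fibres $Q/B\simeq\P^1$, since its degree there is $\langle\lambda,\alpha_0^\vee\rangle=0$. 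Together with $\pi_*\MO_F=\MO_{G/Q}$, this shows $\cL_\lambda$ descends.
\end{itemize}

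\textbf{Step 3 (ampleness of $M:=\cM_\lambda$).} This is the step needing the most care.
\begin{itemize}
\item $\Pic(G/Q)\otimes\R$ has basis the restrictions of $\omega_1,\ldots,\omega_n$.
\item The cone of curves of $G/Q$ is spanned by the images $\pi(C_j)$, $j\neq 0$, of the one-dimensional Schubert curves $C_j=P_{\alpha_j}/B\subset F$.
\item The projection formula gives $M\cdot\pi_*C_j=\pi^*M\cdot C_j=\langle\lambda,\alpha_j^\vee\rangle=a_j>0$.
\item The Kleiman criterion then shows $M$ is ample.
\end{itemize}
Alternatively, $\lambda$ is strictly dominant for the parabolic $Q$, and the standard ampleness criterion on $G/Q$ (positivity against all simple coroots outside the Levi of $Q$) applies directly.

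\textbf{Step 4 (conclusion).} Proposition \ref{p almost ample general}, applied with $P=B$, $Q$, $M$ and $L=\pi^*M$, now gives $H^i(F,K_F+L)=0$ for every $i\neq d=1$, which is the claim. As a by-product it also gives $H^1(F,K_F+L)\simeq H^0(G/Q,K_{G/Q}+M)$.

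The main obstacle is the bookkeeping in Steps 2 and 3: checking that $\lambda$ descends to $G/Q$ and that the descended line bundle is ample. The rest is a direct application of the earlier proposition.
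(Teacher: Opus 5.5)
Your proposal is correct and follows the same route as the paper, which takes the $\P^1$-bundle contraction $\pi\colon F\to E$ induced by $L$, writes $L\sim\pi^*M$ with $M$ ample, and applies Proposition~\ref{p almost ample general} with $d=1$. You additionally spell out the descent and ampleness checks that the paper leaves implicit. Your reading of $\omega_0$ as the fundamental weight with coefficient zero also matches how the paper uses the result, for instance in the case $m=1$ of Proposition~\ref{p g=7 ambient Omega2}, where the coefficient of $\omega_4$ vanishes.
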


\begin{proof}
Let $\pi : F \to E$ be the contraction induced by $L$. 
Then $\pi$ is a $\P^1$-bundle and $L \sim \pi^*M$ for some ample Cartier divisor $M$ on $E$. 
Then the assertion follows from Proposition \ref{p almost ample general}. 
\end{proof}

\begin{rem}\label{r non-diag vanishing}
\begin{enumerate}
\item 
Given a flag variety $X$, we have $H^j(X, \Omega_X^i)=0$ 
for $i \neq j$ \cite[Theorem 2 in page 7]{Lau97}. 
\item 
Given integers $i, j, n, a$ satisfying $0<j<n$, we have 
\[
H^j(\P^n, \Omega_{\P^n}^i(a))=0
\]
if $i \neq j$ or $a \neq 0$. Indeed, this follows from 
(1) and the Bott vanishing theorem. 
\end{enumerate}
\end{rem}

\subsection{Strong linkage principle}\label{ss SLP}

Let $G$ be a reductive group. 
Take a maximal torus $T$ and a Borel subgroup $B$ satisfying $G \supset B \supset T$. 
For $\lambda \in X(T)$, 
we define $\lambda^+$ as the unique element $\lambda^+ \in X(T)$ such that 
\begin{enumerate}
\item $\lambda^+ + \rho$ is dominant, and 
\item $\lambda^+$ belongs to the dot action orbit of $\lambda$ by the Weyl group $W$, i.e., 
\[
\lambda^+ + \rho = w(\lambda + \rho)
\]
for some $w \in W$. 
\end{enumerate}
For example, if $\lambda$ is dominant, then $\lambda = \lambda^+$.

\begin{thm}\label{t SLP}
Let $G$ be a reductive group. 
Take a maximal torus $T$ and a Borel subgroup $B$ satisfying $G \supset B \supset T$. 
For a weight $\lambda \in X(T)$ and a dominant weight $\mu \in X(T)$, 
assume that the simple $G$-module $L(\mu)$ of highest weight $\mu$ is a composition factor of $H^i(G/B, \cL_\lambda)$ for some $i$. 
Then the following hold: 
\begin{enumerate}
\item $\mu \leq \lambda^+$. 
\item $\lambda + \rho = w(\mu + \rho) + p \gamma$ 
for some $w \in W$ and 
$\gamma \in \Z \Phi$, 
where $W$ and $\Phi$ denote the Weyl group and the set of roots 
of $(G, T)$.
\end{enumerate}
\end{thm}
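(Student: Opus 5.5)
The statement is the strong linkage principle for the cohomology of line bundles on $G/B$. I would not reprove it from scratch; instead I would reduce it to the versions in \cite{Jan03} (Andersen's strong linkage principle, \cite[II.6.13, II.6.17]{Jan03}, together with the linkage principle \cite[II.6.17]{Jan03}). The plan is to compare $\mu$ with $\lambda^+$ through the dot action. First, $H^i(G/B,\cL_\lambda)$ is, in Jantzen's notation, $H^i(\lambda)$ with our sign convention: $B$ is the negative Borel, so $H^0(\lambda)\neq 0$ iff $\lambda$ is dominant. If $\lambda+\rho$ is singular, i.e.\ $\langle \lambda+\rho,\alpha^\vee\rangle=0$ for some root $\alpha$, I would check that all cohomology vanishes, so there is nothing to prove. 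This is immediate if $\alpha$ is simple, by Proposition \ref{p positive characteristic reflection formula}(2). In general I would use Jantzen's vanishing for $\lambda+\rho$ on a wall of the Weyl chamber orbit, or reduce via (1) of that proposition. In fact Andersen's theorem is stated for arbitrary $\lambda$, so I would simply invoke it.

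For assertion (1), Andersen's strong linkage principle says the following. If $L(\mu)$ is a composition factor of $H^i(\lambda)$, then $\mu \uparrow w\cdot\lambda$, where $w\in W$ is chosen with $w\cdot\lambda+\rho$ dominant, i.e.\ $w\cdot\lambda=\lambda^+$ in our notation. Here $\uparrow$ is the strong linkage order for the affine Weyl group $W_p$ under the dot action. The key step is then that $\mu\uparrow\lambda^+$ implies $\mu\le\lambda^+$. This holds because each elementary step $s_{\beta,np}\cdot\nu\le\nu$ in the definition of $\uparrow$ changes the weight by a nonnegative multiple of the positive root $\beta$ \cite[II.6.4]{Jan03}. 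Chaining these steps gives $\lambda^+-\mu\in\Z_{\ge0}\Phi^+$, i.e.\ a nonnegative combination of simple roots.

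For assertion (2), I would use that $\mu\uparrow\lambda^+$ in particular gives $\mu\in W_p\cdot\lambda^+=W_p\cdot\lambda$. Write $W_p=W\ltimes p\Z\Phi$, acting by $(w,\gamma)\cdot\nu=w(\nu+\rho)+p\gamma-\rho$. Then $\lambda=x\cdot\mu$ for some $x\in W_p$, which gives $\lambda+\rho=w(\mu+\rho)+p\gamma$ with $w\in W$ and $\gamma\in\Z\Phi$. Alternatively, (2) is exactly the linkage principle \cite[II.6.17]{Jan03}, since it only concerns blocks. One subtlety is that Jantzen assumes $G$ reductive with simply connected derived group, or works with $p\Z\Phi$ versus $p X(T)$. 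Using the translation lattice $p\Z\Phi$ requires the $W_p$-version, which Jantzen proves for all reductive $G$, so no extra hypothesis is needed.

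I expect the main obstacle to be bookkeeping of conventions rather than mathematics. I have to make sure that our $\cL_\lambda$ with $B$ the negative Borel matches Jantzen's $H^i(\lambda)$ and not $H^i(-w_0\lambda)$ or a dual. I also have to check that ``$\lambda^+$'' coincides with the dominant dot-conjugate that Jantzen uses, so that the order $\le$ goes in the stated direction. I would verify this on $\mathrm{SL}_2$, where $H^1(\cL_{-n})$ with $n\ge2$ has composition factors with highest weights $\le n-2=\lambda^+$.
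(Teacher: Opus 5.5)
Your proposal is correct and is essentially the paper's proof, which simply cites the strong linkage principle \cite[Theorem 5.1]{And23}, \cite[Proposition II.6.13]{Jan03}; your unpacking via $\mu\uparrow\lambda^+\Rightarrow\mu\le\lambda^+$ and $\mu\in W_p\cdot\lambda^+=W_p\cdot\lambda$ (with $W_p=W\ltimes p\Z\Phi$) is exactly what is needed. Drop the detour on singular $\lambda+\rho$: the cited result already covers every $\lambda$, and your claim that $H^\bullet(G/B,\cL_\lambda)=0$ for every singular $\lambda+\rho$ is false in characteristic $p$ once the wall is not that of a simple root (e.g.\ for $\SL_3$, $p=2$, $\langle\lambda,\alpha_1^\vee\rangle=2$, $\langle\lambda,\alpha_2^\vee\rangle=-4$, one has $H^1\neq0$).
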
 

\begin{proof}
See 
\cite[Theorem 5.1]{And23} or \cite[Proposition II.6.13]{Jan03}. 
\end{proof}

\begin{rem}
With the above notation, 
the $W$-dot-action-orbits of $\lambda$ and $\lambda^+$ are the same. 
The condition (2) of Theorem \ref{t SLP} means that 
the $W_p$-dot-action-orbits of $\lambda$ and $\mu$ are the same for the affine Weyl group $W_p:=W\ltimes p\mathbb Z\Phi$. 
In particular,  $\lambda, \lambda^+$, and $\mu$ belong to a single $W_p$-dot-action-orbit. 
\end{rem}

\section{Case $g \leq 5$ and quasi-F-splitting}

\begin{prop}\label{p quasi-F-split quadrics}
Let $X$ be a prime Fano threefold of genus $g \geq 5$. 
%over an algebraically closed field $k$of characteristic $p>0$.
Then the following hold: 
\begin{enumerate}
\item $H^2(
X,
\Omega_X^1(m)
)
=0$ for every $m \geq 2$. 
\item $X$ is quasi-$F$-split. 
\item If $H^1(X, \Omega_X^2(p))=0$, then $X$ is $F$-split. 
\end{enumerate}
%In particular, $X$ is quasi-$F$-split.
\end{prop}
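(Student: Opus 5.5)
The plan follows the overview in the introduction: (1) comes from the conormal and Euler sequences, and (2), (3) then follow from Proposition \ref{p GFS criterion}. Since $g \geq 5$, the divisor $-K_X$ is very ample and gives an embedding $X \subset \P^{g+1}$ with $\MO_X(1) = \MO_X(-K_X)$. Moreover, $X$ is an intersection of quadrics: this holds for $g\geq 5$ by the Mukai-type descriptions cited above, namely \cite[Proposition 2.8]{FanoII} for $g=5$ and \cite[Theorem 1.2]{KTprime} for $g \geq 6$. Hence the twisted ideal sheaf $\cI_X(2)$ is globally generated, which gives a surjection $\MO_{\P^{g+1}}^{\oplus r} \twoheadrightarrow \cI_X(2)$.

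For (1), fix $m \geq 2$ and use the conormal sequence
\[
0 \to \cN^\vee_{X/\P^{g+1}}(m) \to \Omega^1_{\P^{g+1}}(m)|_X \to \Omega^1_X(m) \to 0.
\]
It then suffices to show $H^2(X,\Omega^1_{\P^{g+1}}(m)|_X)=0$ and $H^3(X,\cN^\vee(m))=0$.

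For the first vanishing, restrict the Euler sequence to $X$ and twist by $m$:
\[
0\to \Omega^1_{\P^{g+1}}(m)|_X\to \MO_X(m-1)^{\oplus(g+2)}\to\MO_X(m)\to 0.
\]
Both $\MO_X(m-1)$ and $\MO_X(m)$ have $H^1=H^2=0$ for $m\geq 2$. This follows from Kodaira-type vanishing on Fano threefolds, which is Theorem \ref{t Fano3 ANV} with $i=3$: we have $\MO_X(j) = K_X + (j+1)(-K_X)$, so $H^{j'}(X,\MO_X(j))=0$ for $j'>0$ and $j\geq 0$. The long exact sequence then gives $H^2(X,\Omega^1_{\P^{g+1}}(m)|_X)=0$.

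For the second vanishing, tensor the surjection by $\MO_X(m-2)$. This gives a surjection $\MO_X(m-2)^{\oplus r} \twoheadrightarrow \cI_X/\cI_X^2(m)$. Since $X$ is a threefold, $H^3$ is right exact on coherent sheaves, so $H^3(X,\cN^\vee(m))$ is a quotient of $H^3(X,\MO_X(m-2))^{\oplus r}=0$, using $m-2\geq 0$ and the same vanishing as above.

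For (2), apply Proposition \ref{p GFS criterion}(1). For every $r>0$ we have $-p^rK_X = \MO_X(p^r)$ with $p^r\geq 2$, so $H^2(X,\Omega^1_X(p^r))=0$ by (1).

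For (3), apply Proposition \ref{p GFS criterion}(2). We have $H^2(X,\Omega^1_X(p))=0$ by (1) because $p\geq 2$, and $H^1(X,\Omega^2_X(p))=0$ by hypothesis.

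The only step I expect to need care is the input that $X\subset\P^{g+1}$ is cut out scheme-theoretically by quadrics for every $g\geq 5$. For $g=5$ this means $X$ is a complete intersection of three quadrics, and for $g\geq 6$ it means $X$ is a linear section of a homogeneous variety or Mukai model. I would extract this from the cited classification. If it were only available set-theoretically, I would instead use the Mukai descriptions directly to produce a surjection from a sum of copies of $\MO(-2)$ onto $\cI_X$.
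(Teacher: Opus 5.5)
Your proposal is correct and follows the paper's proof essentially step for step. The steps are the same: the conormal sequence, the restricted Euler sequence for $H^2(X,\Omega^1_{\P^{g+1}}(m)|_X)$, the surjection $\MO_X(m-2)^{\oplus r}\twoheadrightarrow \cN^\vee_{X/\P^{g+1}}(m)$ coming from $X$ being cut out by quadrics, and then Proposition \ref{p GFS criterion} for (2) and (3). The only cosmetic difference is the source of the line-bundle vanishings: you derive $H^{>0}(X,\MO_X(n))=0$ for $n\geq 0$ from Theorem \ref{t Fano3 ANV} with $i=3$, whereas the paper quotes the vanishing of $H^1$ and $H^2$ for all $n$ and uses Serre duality for $H^3$.
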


\begin{proof}
As (1) implies (2) and (3) (Proposition \ref{p GFS criterion}), let us show (1). 
Let $X \subset \P^{g+1}$ be the closed embedding 
induced by $|-K_X|$. 
By the conormal exact sequence
\[
0
\to 
\cN_{X/\mathbb P^{g+1}}^\vee(m)
\to 
\Omega_{\mathbb P^{g+1}}^1(m)|_X
\to 
\Omega_X^1(m)
\to 
0, 
\]
it suffices to show (i) and (ii) below.  
\begin{enumerate}
\item[(i)] $H^2(X, \Omega_{\mathbb P^{g+1}}^1(m)|_X)=0$ for every $m \in \Z$. 
\item[(ii)] $H^3(X, \cN_{X/\mathbb P^{g+1}}^\vee(m))=0$ for every $m \geq 2$. 
\end{enumerate}

Let us show (i). 
Restricting the Euler sequence on $\mathbb P^{g+1}$ to $X$, we obtain
\[
0
\to 
\Omega_{\mathbb P^{g+1}}^1(m)|_X
\to 
\cO_X(m-1)^{\oplus(g+2)}
\to 
\cO_X(m)
\to 
0.
\]
As $H^1(X, \MO_X(n)) = H^2(X, \MO_X(n))=0$ for every $n \in \Z$, 
we get $H^2(X, \Omega_{\mathbb P^{g+1}}^1(m)|_X)=0$ for every $m\in \Z$. 
Thus (i) holds.

Let us show  (ii). 
Since $X \subset \P^{g+1}$ is an intersection of quadrics, 
there exists a surjection
$\cO_{\mathbb P^{g+1}}^{\oplus r}
\twoheadrightarrow
\cI_{X/\P^{g+1}}(2)$ for some integer $r>0$. 
Taking the restriction to $X$, we obtain a surjection
\[
\cO_X(-2)^{\oplus r}
\twoheadrightarrow
\cI_X/\cI_X^2
=
\cN_{X/\mathbb P^{g+1}}^\vee.
\]
As the functor $H^3(X, -)$ preserves surjectivity, 
it is enough to prove $H^3(X, \MO_X(m-2))=0$ for every $m\geq 2$, 
which follows from Serre duality. 
Thus (ii) holds. 
\end{proof}

\begin{prop}\label{p genus 3 F-split}
Let $X \subset \P^4$ be a smooth quartic hypersurface.  
%For every integer $m\geq4$, we have
Then the following hold: 
\begin{enumerate}
\item 
$H^2(
X,
\Omega_X^1(m)
)
=0$ for every $m \geq 4$. 
\item 
$H^1(
X,
\Omega_X^2(m)
)
=0$ for every $m \geq 8$. 
\item 
$X$ is quasi-$F$-split if $p>3$. 
\item 
$X$ is $F$-split if $p>7$. 
\end{enumerate}
\end{prop}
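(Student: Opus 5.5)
Here $X\subset\P^4$ is a quartic, so $\MO_X(-K_X)=\MO_X(1)$. Hence $\Omega^i_X(-mK_X)=\Omega^i_X(m)$, and Proposition \ref{p GFS criterion} reduces (3) and (4) to (1) and (2). For (3), take $p\geq 5$: then $p^r\geq 5\geq 4$ for all $r>0$, and (1) gives quasi-$F$-splitting. For (4), take $p\geq 11$: then $p\geq 8$, and (1) and (2) give $F$-splitting. So the work is in (1) and (2). Both will use the conormal sequence $0\to\MO_X(-4)\to\Omega^1_{\P^4}|_X\to\Omega^1_X\to0$, the Koszul sequence $0\to\cF(-4)\to\cF\to\cF|_X\to0$ for vector bundles $\cF$ on $\P^4$, and Bott vanishing on $\P^4$ (Remark \ref{r non-diag vanishing}(2)).

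For (1), twist the conormal sequence by $m$. Then $H^2(X,\Omega^1_X(m))$ sits between $H^2(X,\Omega^1_{\P^4}(m)|_X)$ and $H^3(X,\MO_X(m-4))$. The latter is dual to $H^0(X,\MO_X(-m+3))$, since $K_X=\MO_X(-1)$, and this vanishes for $m\geq4$. For the former, restrict to $X$ via $0\to\Omega^1_{\P^4}(m-4)\to\Omega^1_{\P^4}(m)\to\Omega^1_{\P^4}(m)|_X\to0$. This gives a sequence $H^2(\P^4,\Omega^1_{\P^4}(m))\to H^2(X,\Omega^1_{\P^4}(m)|_X)\to H^3(\P^4,\Omega^1_{\P^4}(m-4))$, whose outer terms vanish by Bott vanishing, since $j=2,3$ differs from $i=1$. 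Alternatively one can use the Euler sequence on $X$ together with $H^1=H^2=0$ of line bundles on $X$, exactly as in Proposition \ref{p quasi-F-split quadrics}(i).

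For (2), take $\wedge^2$ of the conormal sequence. This gives $0\to\Omega^1_X(-4)\to\Omega^2_{\P^4}|_X\to\Omega^2_X\to0$. Twisting by $m$, it suffices to show that $H^1(X,\Omega^2_{\P^4}(m)|_X)=0$ and $H^2(X,\Omega^1_X(m-4))=0$. The second follows from (1) because $m-4\geq4$; this is exactly where the bound $m\geq8$ comes from. For the first, the restriction sequence bounds $H^1(X,\Omega^2_{\P^4}(m)|_X)$ between $H^1(\P^4,\Omega^2_{\P^4}(m))$ and $H^2(\P^4,\Omega^2_{\P^4}(m-4))$. The first vanishes since $1\neq2$. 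The second vanishes since $m-4\neq0$.

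The only delicate point is the exact-sequence bookkeeping in (2), namely identifying the kernel of $\Omega^2_{\P^4}|_X\to\Omega^2_X$ as $\Omega^1_X\otimes\MO_X(-4)$. This holds because the conormal bundle is a line bundle, so the filtration of $\wedge^2$ has only two pieces. Everything else is routine Bott vanishing, so I expect no serious obstacle.
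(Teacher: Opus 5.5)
Your proposal is correct and follows the paper's proof essentially step for step. Both arguments run as follows. For (1), use the conormal sequence with Serre duality for $H^3(X,\MO_X(m-4))$, and the restriction sequence on $\P^4$ with Bott vanishing. For (2), use the $\wedge^2$ of the conormal sequence, (1) at the twist $m-4\geq 4$, and Bott vanishing, with $m-4\neq 0$ handling $H^2(\P^4,\Omega^2_{\P^4}(m-4))$. Then (3) and (4) follow from Proposition \ref{p GFS criterion} via $\MO_X(-K_X)\simeq\MO_X(1)$.
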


\begin{proof}
Since (1) and (2) imply (3) and (4) 
(Proposition \ref{p GFS criterion}), 
it suffices to prove (1) and (2).

Let us show (1). 
Fix an integer $m \geq 4$. 
By the conormal exact sequence %of $X\subset\P^4$ is
$0
\to 
\cO_X(-4)
\to 
\Omega_{\P^4}^1|_X
\to 
\Omega_X^1
\to 
0$, 
it is enough to show $H^2(X, 
\Omega_{\P^4}^1(m)|_X)=0$ and $H^3(X, \MO_X(m-4))=0$. 
The latter one follows from $m \geq 4$ and Serre duality. 
It suffices to prove 
$H^2(X, \Omega_{\P^4}^1(m)|_X)=0$, which 
follows from Remark \ref{r non-diag vanishing} and the restriction exact sequence
\[
0
\to 
\Omega_{\P^4}^1(m-4)
\to 
\Omega_{\P^4}^1(m)
\to 
\Omega_{\P^4}^1(m)|_X
\to 
0. 
\]
Thus (1) holds.

Let us show (2). 
Fix an integer $m\geq 8$. 
Applying $\wedge^2$ to the conormal exact sequence 
$0
\to 
\cO_X(-4)
\to 
\Omega_{\P^4}^1|_X
\to 
\Omega_X^1
\to 
0$,
we obtain
\[
0
\to 
\Omega_X^1(-4)
\to 
\Omega_{\P^4}^2|_X
\to 
\Omega_X^2
\to 
0.
\]
By (1), 
%$H^2(X,\Omega_X^1(m-4)))=0$ (Lemma \ref{l genus 3 Omega1}), 
it suffices to show $H^1(
X,
\Omega_{\P^4}^2(m)|_X
)
=0.$ 
This follows from 
Remark \ref{r non-diag vanishing} and the restriction exact sequence
\[
0
\to 
\Omega_{\P^4}^2(m-4)
\to 
\Omega_{\P^4}^2(m)
\to 
\Omega_{\P^4}^2(m)|_X
\to 
0. 
\]
%and the Bott vanishing theorem $H^{>0}(\P^4, \Omega^2_{\P^4}(n))=0$ ($n>0$). 
Thus (2) holds. 
\end{proof}

\begin{prop}\label{p genus 4 F-split}
Let $X$ be a prime Fano threefold of genus $4$. 
Then the following hold: 
\begin{enumerate}
\item 
$H^2(
X,
\Omega_X^1(m)
)=0$ for every integer $m\geq3$. 
\item 
$H^1(
X,
\Omega_X^2(m)
)
=0$ for every integer $m\geq6$. 
\item If $p>2$, then $X$ is quasi-$F$-split. 
\item 
If $p>5$, then $X$ is $F$-split. 
\end{enumerate}
\end{prop}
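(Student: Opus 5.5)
Since (1) and (2) imply (3) and (4) by Proposition \ref{p GFS criterion} (note $-K_X = \MO_X(1)$, so $-p^rK_X=\MO_X(p^r)$ with $p^r\geq 3$ for $p>2$, and $p\geq 7>6$ for $p>5$), it suffices to prove (1) and (2). A prime Fano threefold of genus $4$ with $|-K_X|$ very ample is a complete intersection of a quadric $Q$ and a cubic in $\P^5$; we take this description from \cite[Proposition 2.8]{FanoII}. Set $V:=\P^5$. We will mimic the proof of Proposition \ref{p genus 3 F-split}, with the conormal bundle now $\cN^\vee_{X/V}\simeq \MO_X(-2)\oplus\MO_X(-3)$.

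For (1), fix $m\geq 3$. From the conormal sequence $0\to \MO_X(m-2)\oplus\MO_X(m-3)\to \Omega^1_V(m)|_X\to \Omega^1_X(m)\to 0$, it suffices to show $H^3(X,\MO_X(m-2))=H^3(X,\MO_X(m-3))=0$, which holds by Serre duality since $m-3\geq 0>-1$, and $H^2(X,\Omega^1_V(m)|_X)=0$. For the latter we resolve $\MO_X$ on $V$ by the Koszul complex $0\to\MO_V(-5)\to\MO_V(-2)\oplus\MO_V(-3)\to\MO_V\to\MO_X\to0$, tensor with $\Omega^1_V(m)$, and reduce to $H^2(V,\Omega^1_V(m))$, $H^3(V,\Omega^1_V(m-2))$, $H^3(V,\Omega^1_V(m-3))$, $H^4(V,\Omega^1_V(m-5))$ vanishing. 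All of these vanish by Remark \ref{r non-diag vanishing}(2) since the cohomological degree $j\in\{2,3,4\}$ satisfies $0<j<5$ and $j\neq 1$. Alternatively one can use the Euler sequence restricted to $X$ as in Proposition \ref{p quasi-F-split quadrics}, using $H^1=H^2=0$ for line bundles on $X$; this is simpler.

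For (2), fix $m\geq 6$. Taking $\wedge^2$ of the conormal sequence gives a filtration on $\Omega^2_V|_X$: there are exact sequences $0\to\cF\to\Omega^2_V|_X\to\Omega^2_X\to0$ and $0\to \MO_X(-5)\to\cF\to \Omega^1_X(-2)\oplus\Omega^1_X(-3)\to 0$, where $\MO_X(-5)=\wedge^2\cN^\vee$. Then $H^1(X,\Omega^2_X(m))=0$ follows from $H^1(X,\Omega^2_V(m)|_X)=0$ and $H^2(X,\cF(m))=0$. The latter needs $H^2(X,\MO_X(m-5))=0$ (true for all line bundles on $X$) and $H^2(X,\Omega^1_X(m-2))=H^2(X,\Omega^1_X(m-3))=0$, which follow from (1) since $m-3\geq3$. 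For $H^1(X,\Omega^2_V(m)|_X)$, tensor the Koszul resolution with $\Omega^2_V(m)$ and reduce to the vanishing of $H^1(V,\Omega^2_V(m))$, $H^2(V,\Omega^2_V(m-2))$, $H^2(V,\Omega^2_V(m-3))$ and $H^3(V,\Omega^2_V(m-5))$. The only potentially nonzero groups are the diagonal ones $H^2(\Omega^2_V(a))$ with $a=0$, which would need $m\in\{2,3\}$; these are excluded since $m\geq6$.

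The main bookkeeping point is the twist bound in (2): it requires $m-3\geq 3$ to apply (1), which is exactly where the threshold $6$ comes from. The Bott-type vanishing on $\P^5$ is routine.
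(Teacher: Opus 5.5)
Your proposal is correct and follows essentially the same route as the paper: for (1), the conormal sequence together with the Koszul resolution of $X\subset\P^5$ and Remark \ref{r non-diag vanishing}; for (2), the $\wedge^2$-filtration with $\wedge^2\cN^\vee_{X/\P^5}\simeq\MO_X(-5)$, part (1) applied at the twists $m-2,m-3\geq 3$, and the same Koszul argument for $\Omega^2_{\P^5}(m)|_X$. Your Euler-sequence alternative for $H^2(X,\Omega^1_{\P^5}(m)|_X)=0$ is also valid; it is the argument the paper uses in Proposition \ref{p quasi-F-split quadrics}.
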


\begin{proof}
Since (1) and (2) imply (3) and (4) 
(Proposition \ref{p GFS criterion}), 
it suffices to prove (1) and (2).

Let us show (1). 
Fix an integer $m \geq 3$. 
The conormal exact sequence of $X\subset\P^5$ is given by 
\[
0
\to 
\cO_X(-2)\oplus\cO_X(-3)
\to 
\Omega_{\P^5}^1|_X
\to 
\Omega_X^1
\to 
0.
\]
As $H^3(X, \MO_X(m-2))=H^3(X, \MO_X(m-3))=0$, 
it suffices to show $H^2(X, \Omega_{\P^5}^1(m)|_X)=0$. 
By the Koszul exact sequence 
\[
0
\to 
\cO_{\P^5}(-5)
\to 
\cO_{\P^5}(-2)\oplus\cO_{\P^5}(-3)
\to 
\cO_{\P^5}
\to 
\cO_X
\to 
0, 
\]
it suffices to show 
\[
H^2(\P^5, \Omega^1_{\P^5}(m)) =
H^3(\P^5, \Omega^1_{\P^5}(m-2))=
H^3(\P^5, \Omega^1_{\P^5}(m-3))=
H^4(\P^5, \Omega^1_{\P^5}(m-5))=0, 
\]
which follows from Remark \ref{r non-diag vanishing}. 
Thus (1) holds.

Let us show (2). 
Fix an integer $m \geq 6$. 
Set $\cE:=
\cN_{X/\P^5}^{\vee}
\simeq
\cO_X(-2)\oplus\cO_X(-3).$ 
Applying $\wedge^2$ to the conormal exact sequence, we obtain
the following exact sequences for some vector bundle $\cF$:
\[
0
\to 
\cF
\to 
\Omega_{\P^5}^2|_X
\to 
\Omega_X^2
\to 
0,\qquad 
0
\to 
\wedge^2\cE
\to 
\cF
\to 
\cE\otimes\Omega_X^1
\to 
0.
\]
As $\wedge^2\cE
\simeq
\cO_X(-5)$, 
we get $H^2(X, \wedge^2\cE(m)) =H^2(X, \cE \otimes \Omega_X^1(m))=0$ by (1).

It suffices to  show
$H^1(
X,
\Omega_{\P^5}^2(m)|_X
)
=0.$ 
The Koszul exact sequence is given by 
\[
0
\to 
\Omega_{\P^5}^2(m-5)
\to 
\Omega_{\P^5}^2(m-2)
\oplus
\Omega_{\P^5}^2(m-3)
\to 
\Omega_{\P^5}^2(m)
\to 
\Omega_{\P^5}^2(m)|_X
\to 
0.
\]
Since $m\geq6$, all the higher cohomologies except for $\Omega_{\P^5}^2(m)|_X$ are zero by 
Remark \ref{r non-diag vanishing}. 
Therefore, $H^{>0}(X, \Omega_{\P^5}^2(m)|_X)=0.$ 
Thus (2) holds. 
\end{proof}

\begin{prop}\label{p genus 5 Omega2}
Let $X$ be a prime Fano threefold of genus $5$. 
Then the following hold: 
\begin{enumerate}
\item $H^1(
X,
\Omega_X^2(m)
)
=0$ for every  $m\geq 5$. 
\item 
If $p>3$, then $X$ is $F$-split. 
\end{enumerate}
\end{prop}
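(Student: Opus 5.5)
A prime Fano threefold $X$ of genus $5$ is a complete intersection of three quadrics in $\P^6$. I will follow the pattern of Proposition \ref{p genus 4 F-split}. Assertion (2) follows from (1) together with Proposition \ref{p quasi-F-split quadrics}(3): if $p>3$ then $p\geq 5$, so $H^1(X,\Omega^2_X(p))=0$ by (1). Hence it suffices to prove (1).

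Fix $m\geq 5$ and set $\cE:=\cN^\vee_{X/\P^6}\simeq \cO_X(-2)^{\oplus 3}$. Applying $\wedge^2$ to the conormal exact sequence $0\to\cE\to\Omega^1_{\P^6}|_X\to\Omega^1_X\to 0$ gives two exact sequences
\[
0\to\cF\to\Omega^2_{\P^6}|_X\to\Omega^2_X\to0,\qquad 0\to\wedge^2\cE\to\cF\to\cE\otimes\Omega^1_X\to0.
\]
Here $\wedge^2\cE\simeq\cO_X(-4)^{\oplus3}$.

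It therefore suffices to prove two vanishings, $H^2(X,\cF(m))=0$ and $H^1(X,\Omega^2_{\P^6}(m)|_X)=0$.
\begin{itemize}
\item For the first, note that $H^2(X,\cO_X(m-4))=0$ because $H^2$ of every line bundle on $X$ vanishes. Also $H^2(X,\Omega^1_X(m-2))^{\oplus3}=0$ by Proposition \ref{p quasi-F-split quadrics}(1), since $m-2\geq3\geq2$.
\item For the second, I use the Koszul resolution of $\cO_X$ by $\cO_{\P^6}(-2)^{\oplus3}$, tensored with $\Omega^2_{\P^6}(m)$:
\[
0\to\Omega^2_{\P^6}(m-6)\to\Omega^2_{\P^6}(m-4)^{\oplus3}\to\Omega^2_{\P^6}(m-2)^{\oplus3}\to\Omega^2_{\P^6}(m)\to\Omega^2_{\P^6}(m)|_X\to0.
\]
Splitting this into short exact sequences, $H^1(X,\Omega^2_{\P^6}(m)|_X)$ is controlled by $H^1(\Omega^2_{\P^6}(m))$, $H^2(\Omega^2_{\P^6}(m-2))$, $H^3(\Omega^2_{\P^6}(m-4))$ and $H^4(\Omega^2_{\P^6}(m-6))$.
\end{itemize}

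By Remark \ref{r non-diag vanishing}(2), the groups $H^j(\P^6,\Omega^2_{\P^6}(a))$ with $0<j<6$ vanish unless $j=2$ and $a=0$. The degree indices here are $j=1,2,3,4$, all in the range $0<j<6$. The only potentially nonzero term is $H^2(\Omega^2_{\P^6}(m-2))$, which requires $m=2$, and this is excluded by $m\geq5$.

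The main point to check is exactly this last step: the bound $m\geq5$ is what keeps the twist $m-2$ away from the diagonal class $H^2(\Omega^2_{\P^6})$. In fact the argument seems to work even for smaller $m$, as long as $m-2\geq2$ for the $\Omega^1_X$ term. The rest is bookkeeping of the Koszul complex exactly as in the genus $4$ case.
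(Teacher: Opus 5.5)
Your proposal is correct and follows the paper's proof essentially step for step. Both take $\wedge^2$ of the conormal sequence with $\wedge^2\cE\simeq\cO_X(-4)^{\oplus 3}$, use Proposition \ref{p quasi-F-split quadrics} for the $\cF$-terms, and reduce via the Koszul complex to $H^{1+\ell}(\P^6,\Omega^2_{\P^6}(m-2\ell))=0$ for $0\leq\ell\leq 3$, which Remark \ref{r non-diag vanishing} settles. Your side remark is also right: the same argument already gives the vanishing for $m\geq 4$, though this does not change conclusion (2).
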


\begin{proof}
Since (1) and Proposition \ref{p quasi-F-split quadrics} imply (2) (Proposition \ref{p GFS criterion}), let us show (1). 
Fix $m \geq 5$. 
Set $
\cE:=
\cN_{X/\P^6}^{\vee}
\simeq
\cO_X(-2)^{\oplus3}.$ 
Applying $\wedge^2$ to the conormal exact sequence 
$0
\to 
\cE
\to 
\Omega_{\P^6}^1|_X
\to 
\Omega_X^1
\to 
0$, 
we get  the following exact sequences for some vector bundle $\cF$:
\[
0
\to 
\cF
\to 
\Omega_{\P^6}^2|_X
\to 
\Omega_X^2
\to 
0, \qquad 
0
\to 
\wedge^2\cE
\to 
\cF
\to 
\cE\otimes\Omega_X^1
\to 
0.
\]
We have
$\wedge^2\cE
\simeq
\cO_X(-4)^{\oplus3}.$ 
Hence $H^2(X, \wedge^2\cE(m)) = H^2(X, 
\cE\otimes\Omega_X^1(m))=0$ (Proposition \ref{p quasi-F-split quadrics}).

It suffices to show
$H^1(
X,
\Omega_{\P^6}^2(m)|_X
)
=0.$
The Koszul exact sequence of $X\subset\P^6$ is given by 
\[
0
\to 
\cO_{\P^6}(-6)
\to 
\cO_{\P^6}(-4)^{\oplus3}
\to 
\cO_{\P^6}(-2)^{\oplus3}
\to 
\cO_{\P^6}
\to 
\cO_X
\to 
0.
\]
Hence it is enough to prove 
\[
H^{1 +\ell}(\P^6, \Omega^2_{\P^6}(m-2\ell)) =0
\]
for every $\ell \in \{ 0, 1, 2, 3\}$. 
By $0< 1+\ell < 6$, 
this follows from Remark \ref{r non-diag vanishing} when $m -2\ell \neq 0$. 
If $m -2\ell =0$, then it suffices to show $1+ \ell \neq 2$ (Remark \ref{r non-diag vanishing}), which holds by 
$1+\ell = 1+m/2 \geq 1+5/2 >2$. 
Thus (1) holds. 
\end{proof}

\begin{thm}\label{t QFS main}
Let $X$ be a Fano threefold. 
Then the following hold: 
\begin{enumerate}
\item 
If $X$ is not quasi-$F$-split, 
then $\Pic X = \Z K_X$, 
$g \in \{2, 3, 4\}$, and 
the following hold 
%he pair $(g, p)$ satisfies one of the following 
%$g \in \{2, 3, 4\}$ 
for $g := \frac{(-K_X)^3}{2} +1$: 
\begin{enumerate}
\item If $g=2$, then $p \in \{2, 3, 5\}$. 
\item If $g=3$, then $p \in \{2, 3\}$. 
\item If $g=4$, then $p=2$. 
\end{enumerate}
\item 
If $p>5$, then $X$ is quasi-$F$-split. 
\end{enumerate}
\end{thm}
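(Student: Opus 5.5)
Assertion (2) follows directly from (1): if $p>5$, then the pair $(g,p)$ lies in none of the exceptional lists (a)--(c). So the work is to prove (1), and I would do it by contraposition. Assume $X$ is not quasi-$F$-split.

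\textbf{Step 1: reduce to prime Fano threefolds.} The introduction records that Theorem \ref{intro main} was established in \cite{KTLift2} except for prime Fano threefolds with $|-K_X|$ very ample. I would quote the precise quasi-$F$-splitting statement from \cite{KTLift2}. As I recall it, a Fano threefold that is not quasi-$F$-split must satisfy $\Pic X=\Z K_X$. Moreover, when $|-K_X|$ is not very ample, one has $g=2$ (a sextic double solid) or possibly the hyperelliptic $g=3$ case, and then $p\in\{2,3,5\}$, respectively $p\in\{2,3\}$. These constraints match the allowed lists for $g=2$ and $g=3$. This is the step I expect to be the main obstacle, since it depends on stating the external result from \cite{KTLift2} correctly rather than on any computation here.

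\textbf{Step 2: the very ample case, $g\geq 5$.} Now assume $\Pic X=\Z K_X$ and $|-K_X|$ very ample, so $g\geq 3$, and $\mathcal{O}_X(1)=\mathcal{O}_X(-K_X)$. If $g\geq 5$ (so $g\in\{5,\dots,10,12\}$), Proposition \ref{p quasi-F-split quadrics}(2) says $X$ is quasi-$F$-split in every characteristic. This contradicts our assumption, so $g\in\{3,4\}$.

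\textbf{Step 3: the cases $g=4$ and $g=3$.}
\begin{itemize}
\item For $g=4$, Proposition \ref{p genus 4 F-split}(3) shows $X$ is quasi-$F$-split whenever $p>2$, so $p=2$.
\item For $g=3$ with $|-K_X|$ very ample, $X$ is a smooth quartic in $\P^4$. Proposition \ref{p genus 3 F-split}(3) then forces $p\in\{2,3\}$.
\end{itemize}
Together with the $g=2$ case from Step 1, this shows $g\in\{2,3,4\}$ with the stated restrictions on $p$, completing the proof of (1).
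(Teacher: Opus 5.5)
Your proof is correct and is essentially the paper's argument. The paper likewise gets $\Pic X=\Z K_X$ from \cite[Theorem E]{KTLift2} and rules out $g\geq 5$ via Proposition \ref{p quasi-F-split quadrics}; it then settles the non-very-ample case by \cite[Proposition 6.16]{KTLift2} (the precise reference you were looking for) and the very ample cases $g=3,4$ by Propositions \ref{p genus 3 F-split} and \ref{p genus 4 F-split}, with (2) following immediately from (1). The only difference is ordering: the paper applies Proposition \ref{p quasi-F-split quadrics} before splitting by very ampleness, which is harmless since $g\geq 4$ already forces $|-K_X|$ to be very ample.
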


\begin{proof}
Let us show (1). 
Assume that $X$ is not quasi-$F$-split. 
Then we get $\Pic X = \Z K_X$ by \cite[Theorem E]{KTLift2}. 
It follows from Proposition \ref{p quasi-F-split quadrics} that $g \leq 4$. 
If $|-K_X|$ is not very ample (resp.\ is very ample), 
then (a)-(c) hold by \cite[Proposition 6.16]{KTLift2} 
(resp.\ Proposition \ref{p genus 3 F-split} and Proposition \ref{p genus 4 F-split}). 
Thus (1) holds. 
The assertion (2) immediately follows from (1). 
% Let us show (2). 
% By (1), we may assume that 
% $X$ is a prime Fano threefold of genus $g \in  \{2, 3, 4\}$. 
% If $|-K_X|$ is very ample, then 
% we have $g \in \{3, 4\}$, and hence the assertion follows from Proposition \ref{p genus 3 F-split} and Proposition \ref{p genus 4 F-split}. 
% If $|-K_X|$ is not very ample, then 
% the assertion holds by \cite[Proposition 6.16]{KTLift2}. 
\end{proof}

\section{Genus $6$}

Let $X$ be a prime Fano threefold of genus $6$. 
The purpose of this section is to prove that $X$ is $F$-split 
when $p>3$ (Theorem \ref{t genus 6 F-split}). 
Recall that one of the following holds \cite[Theorem 5.5]{KTLift1}:
\begin{enumerate}
\item
%There exists a smooth complete intersection
We have a complete intersection 
\[
X=
\Sigma\cap H_1\cap H_2\cap Q,
\]
where $\Sigma:=\Gr(2,5)$, $H_1$ and $H_2$ are hyperplanes, and $Q$ is a quadric hypersurface.
\item
There exists a separable double cover
$f\colon X\to V,$ 
where $V$ is the smooth quintic del Pezzo threefold.
\end{enumerate}
These cases will be treated separately in 
Subsection \ref{ss g=6 CI} and
Subsection \ref{ss g=6 2-to-1}. 
We start by establishing vanishing results 
Lemma \ref{l genus 6 Grassmannian Omega1} and Lemma \ref{l genus 6 Grassmannian Omega2} for $\Gr(2, 5)$ 
used in both cases.

\begin{nota}\label{n weight lattice on GL5}
Let $T  \subset \SL_5$ be the diagonal maximal torus and 
let $B \subset \SL_5$ be the lower triangular Borel subgroup. 
\begin{enumerate}
\item 
For $1\leq i\leq 5$, let $\epsilon_i\in X(T)$ be the character defined by $\epsilon_i(\diag(t_1,\ldots,t_5)):=
t_i.$ 
For $\delta := (1, 1, 1, 1, 1) \in \Z^5$, 
%we define $[a_1,\ldots,a_5]:= (a_1,..., a_5) + \Z \delta \in \Z^5 / \Z \delta$, 
we use the following identification: 
\[
X(T)
%=(\bigoplus_{i=1}^5\Z e_i)/ \Z \delta 
\simeq 
\Z^5 / \Z \delta, \quad 
a_1\epsilon_1+\cdots+a_5\epsilon_5 \leftrightarrow 
[a_1,\ldots,a_5] := (a_1,..., a_5) + \Z \delta. 
\]
For $\alpha_{ij} := \epsilon_i -\epsilon_j$ 
and 
$\alpha_i
:=
\epsilon_i-\epsilon_{i+1}$, 
$\Phi^+ := \{ \alpha_{ij} \,|\, 1 \leq i< j \leq 5\}$ 
(resp.\ $\Delta := \{ \alpha_1, \alpha_2, \alpha_3, \alpha_4\}$)  
is the set of positive (resp.\ simple) roots corresponding to the upper triangular Borel subgroup $B^-$. 
For 
$\omega_i:=
\epsilon_1+\cdots+\epsilon_i$, 
$\omega_1, \omega_2, \omega_3, \omega_4$ are the fundamental weights. 
The Weyl group $W (\simeq S_5)$ acts on $X(T) = \Z^5/\Z\delta$ by permuting the coordinates of $\Z^5$. 
Set 
\[
\rho := \frac{1}{2}\sum_{\alpha \in \Phi^+} \alpha = 
[4, 3, 2, 1, 0] = \omega_1 + \omega_2 + \omega_3+\omega_4. 
\]
\item 
Given $\lambda \in X(T)$, 
there is the corresponding line bundle $\cL_{\lambda}$ on
$\SL_5/B$. 
% For $\delta := (1, 1, 1, 1, 1) \in \Z^5 = X(T)$, 
% we have an isomorphism $\cL_{\lambda} \simeq 
% \cL_{\lambda +n\delta}$ of coherent sheaves 
% for every $n \in \Z$. 
% For $\omega_i
% :=
% \epsilon_1+\cdots+\epsilon_i$, 
%For $(a_1,\ldots,a_5) \in \Z^5$, we set $\cL_{(a_1, ..., a_5)} := \cL_{[a_1, ..., a_5]}$. 
The following are equivalent for $a_1, ..., a_5 \in \Z$:  
\begin{itemize}
\item $[a_1,\ldots,a_5]$ is dominant. 
\item $a_1\geq a_2\geq a_3\geq a_4\geq a_5$. 
\item $\cL_{[a_1,\ldots,a_5]}$ is nef. 
\item $[a_1,\ldots,a_5] =\sum_{i=1}^4 m_i \omega_i$ 
for some $m_1, ..., m_4 \in \Z_{\geq 0}$. 
\end{itemize}
\item
For $1\leq r\leq 4$, let 
$\pi_r:F=\SL_5/B\longrightarrow \Gr(r,5)$ 
be the natural projection. 
Then it is well known that $\pi_r^*\MO_{\Gr(r, 5)}(1) \simeq 
\cL_{\omega_{5-r}}$. 
In particular,  we have
$\pi^*_2\MO_{\Gr(2, 5)}(1)
\simeq
\cL_{\omega_3}
=
\cL_{[1, 1, 1, 0, 0]}$. 
\item 
For $F:=\SL_5/B$ and $\Sigma := \Gr(2,5)$, 
let $\pi:F=\SL_5/B\to \Sigma = \Gr(2, 5)$ be the natural projection. 
\end{enumerate}
\end{nota}

\begin{lem}\label{l genus 6 line bundle pushforwards}
We use Notation \ref{n weight lattice on GL5}. 
Let $\cU$ and $\cQ$ be the universal subbundle and quotient bundle on $\Sigma$, respectively. 
Then the following hold:
\begin{enumerate}\item 
% $\cL_{\omega_2-2\omega_3+\omega_4} \simeq \cL_{[0,0,-1,1,0]}$ 
% and 
$
R\pi_*\cL_{\omega_2-2\omega_3+\omega_4}
\simeq
\pi_*\cL_{\omega_2-2\omega_3+\omega_4}
\simeq
\cU\otimes\cQ^\vee \simeq \Omega^1_\Sigma.$ 
\item 
%$\cL_{2\omega_2-2\omega_3} \simeq \cL_{[0,0,-2,0,0]}$ and 
$
R\pi_*\cL_{2\omega_2-2\omega_3}
\simeq
\pi_*\cL_{2\omega_2-2\omega_3}
\simeq
\Sym^2\cQ^\vee.$ 
\item 
%$\cL_{\omega_1-2\omega_3+2\omega_4} \simeq \cL_{[1,0,0,2,0]}$ and 
$
R\pi_*\cL_{\omega_1-2\omega_3+2\omega_4}
\simeq
\pi_*\cL_{\omega_1-2\omega_3+2\omega_4}
\simeq
\Sym^2\cU\otimes\cQ.$
\end{enumerate}
%Moreover, the three line bundles appearing above are $\pi$-nef.
\end{lem}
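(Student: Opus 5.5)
Our plan is to treat the three items uniformly. The fibre of $\pi:F=\SL_5/B\to\Sigma=\Gr(2,5)$ over $o=eP$ is $P/B\simeq \SL_2/B_2\times\SL_3/B_3$, where the Levi of $P$ is $S(\GL_3\times\GL_2)$ acting on coordinates $(1,2,3)$ and $(4,5)$; the coordinates $4,5$ correspond to $\cU$ and $1,2,3$ to $\cQ$, consistently with $\pi^*\MO_\Sigma(1)\simeq\cL_{[1,1,1,0,0]}$. We first rewrite each weight in coordinates:
\[
\omega_2-2\omega_3+\omega_4=[0,0,-1,1,0],\quad 2\omega_2-2\omega_3=[0,0,-2,0,0],\quad \omega_1-2\omega_3+2\omega_4=[1,0,0,2,0].
\]
In each case the pairings with $\alpha_1,\alpha_2$ (the $\GL_3$-part) and with $\alpha_4$ (the $\GL_2$-part) are nonnegative. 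Indeed, writing $\lambda=\sum m_i\omega_i$, we have $m_1,m_2,m_4\geq 0$ in all three cases, and only $m_3$ is negative. Hence each $\cL_\lambda$ is $\pi$-nef, and Proposition \ref{p flag Kodaira} applied to $\pi$ gives $R^i\pi_*\cL_\lambda=0$ for $i>0$. This proves the first isomorphism $R\pi_*\cL_\lambda\simeq\pi_*\cL_\lambda$ in each item.

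Next we identify $\pi_*\cL_\lambda$ as the homogeneous bundle $G\times^P H^0(P/B,\cL_\lambda|_{P/B})$. The $P$-module is the induced module from $B$ to $P$ of $\lambda$, and the unipotent radical of $P$ acts trivially on it. On $\SL_3/B_3\times\SL_2/B_2$ the restricted line bundle is an external tensor product, so the fibre is
\[
\nabla_{\GL_3}(\lambda_1,\lambda_2,\lambda_3)\otimes\nabla_{\GL_2}(\lambda_4,\lambda_5).
\]
Here $\nabla$ of a dominant weight is the dual Weyl module. For the weights occurring, these are very small:
\begin{itemize}
\item $\nabla_{\GL_2}(1,0)$ and $\nabla_{\GL_3}(1,0,0)$ are the standard representations, giving $\cU^{\vee}$ or $\cU$, and $\cQ$, up to the sign convention below;
\item $\nabla_{\GL_3}(0,0,-1)$ is the dual of the standard representation;
\item $\nabla(2,0)$ and $\nabla(0,0,-2)$ are $\Sym^2$ of the standard representation and of its dual, respectively.
\end{itemize}
These are true characteristic-free statements: $H^0(\P^n,\MO(d))=\Sym^d$. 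So in item (1) the fibre is a standard representation tensored with a dual standard one, giving $\cU\otimes\cQ^\vee$; in item (2) it gives $\Sym^2\cQ^\vee$; and in item (3) it gives $\Sym^2\cU\otimes\cQ$.

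The main obstacle is bookkeeping of conventions rather than mathematics. Since $B$ is lower triangular, the global-section convention is $H^0(G/B,\cL_\lambda)\neq0$ iff $\lambda$ is dominant. We must check which of $\cU,\cU^\vee$, and which of $\cQ,\cQ^\vee$, correspond to the standard representations on coordinates $4,5$ and $1,2,3$. We will calibrate this with the given normalization $\pi_2^*\MO(1)=\cL_{[1,1,1,0,0]}$, i.e.\ $\det\cQ$ corresponds to $(1,1,1)$ on the $\GL_3$-block, together with $\det\cU^\vee\simeq\det\cQ$ mod $\delta$. Under this normalization, $(1,0,0)$ gives $\cQ$ and $(-1,-1)\sim(1,1,1)$ gives $\det\cU$-dual. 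Consistency of the three stated answers (e.g.\ $[0,0,-1,1,0]\mapsto\cQ^\vee\otimes\cU$) then pins down that the $\GL_2$-block weight $(1,0)$ corresponds to $\cU$ twisted appropriately modulo $\delta$. We will verify this with a rank/determinant check: for instance $\det(\cU\otimes\cQ^\vee)=\MO(-5)=\omega_\Sigma$ must match the pushforward's determinant. Finally, $\cU\otimes\cQ^\vee\simeq\Omega^1_\Sigma$ is the standard description of the cotangent bundle of a Grassmannian.
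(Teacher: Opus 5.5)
Your proposal is correct, but it takes a different route from the paper's proof.

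\textbf{The paper's argument.} The paper never computes a Levi representation in this lemma. It realises $\Fl(1,2;5)$, $\Fl(2,3;5)$ and $\Fl(2,4;5)$ as the projective bundles $\P_\Sigma(\cU)$, $\P_\Sigma(\cQ^\vee)$ and $\P_\Sigma(\cQ)$. It then combines $Rq_*\MO(a\xi)\simeq\Sym^a$ with flat base change over the fibre products $\Fl(1,2,3;5)$ and $\Fl(1,2,4;5)$. The tautological classes are pinned down as $\xi_1\sim\omega_4-\omega_3$, $\xi_3\sim\omega_2-\omega_3$ and $\xi_4\sim\omega_1$. This is done by comparing the flag-variety formula for $-K_{E_i}$ with the projective-bundle formula. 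The three weights are then $\xi_1+\xi_3$, $2\xi_3$ and $\xi_4+2\xi_1$.

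\textbf{Your argument.} You compute the fibre $H^0(P/B,\cL_\lambda)\simeq\nabla_{\GL_3}\otimes\nabla_{\GL_2}$ by K\"unneth on $P/B$ and recognise the small induced modules. This is the method the paper itself uses later, in genus $7$, $9$ and $12$.

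\textbf{Comparison.} Your route is more uniform: it yields the fibre module for any $\pi$-nef weight, not only for combinations of tautological classes. The price is that you must track sign and duality conventions. The paper trades that for canonical-class bookkeeping.

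\textbf{Calibrating the conventions.} Do not fix the conventions by appeal to the stated answers, which is circular. The clean way is as follows. The lower block-triangular $P$, with blocks $A\in\GL_3$ and $D\in\GL_2$, stabilises $\langle e_4,e_5\rangle$. It acts there through $D$, and through $A$ on $k^5/\langle e_4,e_5\rangle$. Its unipotent radical acts trivially on both. Hence $\cU_o$ has weights $\epsilon_4,\epsilon_5$ and $\cQ_o$ has weights $\epsilon_1,\epsilon_2,\epsilon_3$. This agrees with $\pi^*\MO_\Sigma(1)\simeq\cL_{\omega_3}$. It gives $[0,0,-1,1,0]\mapsto\cQ^\vee\otimes\cU$, $[0,0,-2,0,0]\mapsto\Sym^2\cQ^\vee$ and $[1,0,0,2,0]\mapsto\cQ\otimes\Sym^2\cU$ directly.
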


\begin{proof}
% For each statement of (1), (2), (3), 
% the former isomorphism is ensured by direct computation. 
% Let us prove the latter ones. 
We have the following commutative diagram in which 
the lower two squares are cartesian.   
\[
\begin{tikzcd}[column sep=0.3em, row sep=2.2em]
&
&
F=\Fl(1,2,3,4;5)
\arrow[dl]
\arrow[dr]
\arrow[lldd, bend right, "p_3"']
\arrow[dd, "p_1"]
\arrow[rrdd, bend left, "p_4"]
&
\\
&
\Fl(1,2,3;5)
\arrow[dl]
\arrow[dr]
\arrow[dd, "q_{13}"]
%\arrow[ddr, phantom, "\lrcorner", very near start]
&
&
\Fl(1,2,4;5)
\arrow[dl]
\arrow[dr]
\arrow[dd, "q_{14}"]
%\arrow[ddl, phantom, "\lrcorner", very near start]
\\
E_3 := \Fl(2,3;5)
\arrow[dr, "q_3"']
&
&
E_1 :=\Fl(1,2;5)
\arrow[dl, "q_1"]
\arrow[dr, "q_1"']
&
&
E_4 :=\Fl(2,4;5)
\arrow[dl, "q_4"]
\\
&
\Gr(2, 5) \arrow[r, equal]
&
\Sigma \arrow[r, equal]
&
\Gr(2, 5)
&
\end{tikzcd}
\]
Note that 
\begin{equation}\label{e1 genus 6 line bundle pushforwards}
E_1\simeq \P_\Sigma(\cU),\qquad
E_3\simeq \P_\Sigma(\cQ^\vee),\qquad
E_4\simeq \P_\Sigma(\cQ),
\end{equation}
and each $q_i$ is the induced projective space bundle. 
For an integer $a \geq 0$ and the tautological divisor $\xi_i$ of $q_i$, 
we obtain %it holds that %the following hold for every $a\geq0$: 
\[
Rq_{1*}\MO_{E_1}(a\xi_1)\simeq \Sym^a\cU,\quad 
Rq_{3*}\MO_{E_3}(a\xi_3)\simeq \Sym^a\cQ^\vee,\quad 
Rq_{4*}\MO_{E_4}(a\xi_4)\simeq \Sym^a\cQ.
\]
As the above lower squares are cartesian, 
the following hold for all $a_1,a_3,a_4\geq0$: 
%the projection formula yields
\[
Rq_{13*}\MO_{\Fl(1,2,3;5)}(a_1\xi_1+a_3\xi_3)
\simeq
\Sym^{a_1}\cU\otimes_{\MO_\Sigma}\Sym^{a_3}\cQ^\vee, 
\]
\[
Rq_{14*}\MO_{\Fl(1,2,4;5)}(a_1\xi_1+a_4\xi_4)
\simeq
\Sym^{a_1}\cU\otimes_{\MO_\Sigma}\Sym^{a_4}\cQ.
\]
The three line bundles $\cL_{b_1\omega_1+ b_2 \omega_2 + b_3 \omega_3 + b_4 \omega_4}$ on $F$ appearing in the statement are $\pi$-nef, 
because the coefficients $b_1, b_2, b_4$ of $\omega_1, \omega_2, \omega_4$ are non-negative. 
Then it is enough to prove 
\begin{equation}\label{e2 genus 6 line bundle pushforwards}
\xi_1
\sim
\omega_4-\omega_3,
\qquad
\xi_3
\sim
\omega_2-\omega_3,
\qquad
\xi_4
\sim
\omega_1, 
\end{equation}
where we identify the fundamental weights with the corresponding divisor classes on the flag varieties.

By the canonical divisor formula for flag varieties, we have
\begin{equation}\label{e3 genus 6 line bundle pushforwards}
-K_{E_1}
\sim
2\omega_4+4\omega_3,
\qquad
-K_{E_3}
\sim
3\omega_2+3\omega_3,
\qquad
-K_{E_4}
\sim
3\omega_1+4\omega_3.
\end{equation}
Here 
% On the other hand, since
% \[
% E_1\simeq\P_\Sigma(\cU),
% \qquad
% E_3\simeq\P_\Sigma(\cQ^\vee),
% \qquad
% E_4\simeq\P_\Sigma(\cQ),
% \]
% the canonical divisor formula for projective bundles gives
it follows from (\ref{e1 genus 6 line bundle pushforwards}) that 
\begin{equation}\label{e4 genus 6 line bundle pushforwards}
-K_{E_1}
\sim
2\xi_1+6q_1^*H, \quad 
-K_{E_3}
\sim
3\xi_3+6q_3^*H,\quad 
-K_{E_4}
\sim
3\xi_4+4q_4^*H.
\end{equation}
As $H$ corresponds to $\omega_3$, 
(\ref{e2 genus 6 line bundle pushforwards}) holds by comparing 
(\ref{e3 genus 6 line bundle pushforwards}) and 
(\ref{e4 genus 6 line bundle pushforwards}). 
\qedhere
\end{proof}

\begin{lem}\label{l genus 6 Grassmannian Omega1}
Set $\Sigma:=\Gr(2,5)$.
Then the following hold: 
\begin{enumerate}
\item
$H^i(
\Sigma,
\Omega_\Sigma^1(m)
)
=0$ for every $i>0$ and $m>0$. 
\item
$H^i(
\Sigma,
\Omega_\Sigma^1(m)
)
=0$ for  every $i\geq0$ and $m\in\{-1,-2\}$. 
\end{enumerate}
\end{lem}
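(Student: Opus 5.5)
By Lemma \ref{l genus 6 line bundle pushforwards}(1), $R\pi_*\cL_{\omega_2-2\omega_3+\omega_4}\simeq\Omega^1_\Sigma$. The plan is to pull back to the flag variety and work with line bundles there. Since $\pi^*\MO_\Sigma(1)\simeq\cL_{\omega_3}$, the projection formula gives
\[
H^i(\Sigma,\Omega^1_\Sigma(m))\simeq H^i(F,\cL_{\lambda_m}),\qquad \lambda_m:=\omega_2+(m-2)\omega_3+\omega_4=[2,2,m,1,0].
\]
So everything reduces to cohomology of line bundles on $F=\SL_5/B$.

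For (1), take $m\geq 1$. For $m\geq 2$, $\lambda_m$ is dominant, so $\cL_{\lambda_m}$ is nef. Proposition \ref{p flag Kodaira}, applied to $F\to\Spec k$, then gives $H^{i}(F,\cL_{\lambda_m})=0$ for $i>0$.

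For $m=1$, $\lambda_1=\omega_2-\omega_3+\omega_4$ is not nef. We have $-K_F=2\rho=2(\omega_1+\cdots+\omega_4)$, so
\[
\lambda_1-K_F=2\omega_1+3\omega_2+\omega_3+3\omega_4,
\]
whose coefficients are all positive. This line bundle is ample, so Proposition \ref{p flag Kodaira} again gives the vanishing.

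For (2), take $m\in\{-1,-2\}$ and use Proposition \ref{p positive characteristic reflection formula}. For $m=-1$, $\lambda_{-1}=\omega_2-3\omega_3+\omega_4$ has $\langle\lambda_{-1},\alpha_3^\vee\rangle=-3$. For $m=-2$, $\lambda_{-2}=\omega_2-4\omega_3+\omega_4$ has $\langle\lambda_{-2},\alpha_3^\vee\rangle=-4$.

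When $p$ is at least this pairing in absolute value, part (1) of that proposition applies and replaces $\lambda$ by $s_{\alpha_3}\cdot\lambda$, shifting the cohomological degree. I compute the reflected weights in coordinates.
\begin{itemize}
\item For $m=-1$: $\lambda_{-1}+\rho=[6,5,1,2,0]$, and reflecting gives $[6,5,2,1,0]-\rho=[2,2,0,0,0]$.
\item For $m=-2$: $[2,2,-2,1,0]+\rho=[6,5,0,2,0]$, and reflecting gives $[6,5,2,0,0]-\rho=[2,2,0,-1,0]$.
\end{itemize}

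In the second case, $[2,2,0,-1,0]+\rho=[6,5,2,0,0]$ has two equal entries. So the pairing with $\alpha_4^\vee$ equals $-1$, and part (2) of the proposition gives vanishing in all degrees.

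In the first case, $\rho$-regularity is not violated. Instead I would reflect further until reaching a weight whose $\rho$-shift has repeated coordinates. A cleaner alternative is to use the characteristic-free vanishing: $[6,5,1,2,0]$ has distinct entries, so degenerate-coordinate vanishing does not apply directly.

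On reflection, for $m=-1$ I would rather argue on $\Sigma$ itself by Serre duality. We have $H^i(\Sigma,\Omega^1_\Sigma(-1))\simeq H^{6-i}(\Sigma,T_\Sigma(-4))^\vee$, and $T_\Sigma=\cU^\vee\otimes\cQ$. Pulling back, this corresponds to the weight $[0,0,-4,\ldots]$-type line bundle. Its $\rho$-shift has a repeated entry, which gives vanishing via Proposition \ref{p positive characteristic reflection formula}(2) directly. Repeated entries of $\lambda+\rho$ force the pairing with some simple coroot, after a permutation, to be $-1$, and this is where care is needed.

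The main obstacle is that the reflection formula requires $|\langle\lambda,\alpha^\vee\rangle|\leq p$, which fails for small $p$ (for example $p=2,3$). To handle all $p$ uniformly, I would instead use the strong linkage principle, Theorem \ref{t SLP}(1). If some $H^i$ is nonzero, there is a dominant $\mu\leq\lambda^+$. I would then choose a linear functional $f$ as in the introduction that is nonnegative on dominant weights and on simple roots but negative on $\lambda^+$, which yields a contradiction. Computing $\lambda^+$ for $m=-1,-2$ and finding such an $f$ (for example, a suitable sum of coordinates) is the step I expect to require the most checking.
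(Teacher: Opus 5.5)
Your argument for (1) is correct and differs from the paper, which simply cites \cite[Proposition 8.2]{KT26}. You pass to $F$ via Lemma \ref{l genus 6 line bundle pushforwards}(1) and $\pi^*\MO_\Sigma(1)\simeq\cL_{\omega_3}$. Since $\cL_{\omega_2+(m-2)\omega_3+\omega_4}\otimes\MO_F(-K_F)\simeq\cL_{2\omega_1+3\omega_2+m\omega_3+3\omega_4}$ is ample for every $m\geq 1$, Proposition \ref{p flag Kodaira} gives the vanishing directly. This is self-contained and needs no case split between $m=1$ and $m\geq 2$.

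Part (2), however, has a genuine gap.

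\emph{Wrong coordinates.} Your weight is misidentified: $\omega_2+(m-2)\omega_3+\omega_4=[m,m,m-1,1,0]=[0,0,-1,1-m,-m]$. Your $[2,2,m,1,0]$ equals $(2-m)\omega_2+(m-1)\omega_3+\omega_4$; it even contradicts your own value $\la\lambda_{-1},\alpha_3^\vee\ra=-3$, since it gives $-2$. Hence every reflected weight you list is wrong, and so is your claim that $\lambda_{-1}+\rho$ is regular.

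\emph{The decisive step is missing.} The strategy you finally adopt (strong linkage plus a linear functional) is exactly the paper's. But you stop precisely at the step that constitutes the proof: computing $\lambda_m^+$ and exhibiting the functional. That step is not automatic. With your coordinates, $[6,5,1,2,0]$ sorts to $[6,5,2,1,0]$, so $\lambda_{-1}^+=[2,2,0,0,0]$ would be dominant. Then $\mu=\lambda_{-1}^+$ would be permitted by Theorem \ref{t SLP}, and no functional could give a contradiction.

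\emph{The missing computation.} With the correct weights, $\lambda_{-1}+\rho=[4,3,1,3,1]$ and $\lambda_{-2}+\rho=[4,3,1,4,2]$. Hence $\lambda_{-1}^+=[0,0,1,0,1]$ and $\lambda_{-2}^+=[0,1,1,1,1]$. Take $f([a_1,\ldots,a_5]):=a_1-a_5$, i.e.\ pairing with $\eta=\alpha_1^\vee+\alpha_2^\vee+\alpha_3^\vee+\alpha_4^\vee$; it is well defined modulo $\delta$. It satisfies:
\begin{itemize}
\item $f(\lambda_m^+)=-1$ for $m\in\{-1,-2\}$;
\item $f(\mu)\geq 0$ for every dominant $\mu$;
\item $f(\alpha_1)=f(\alpha_4)=1$ and $f(\alpha_2)=f(\alpha_3)=0$.
\end{itemize}
So $\lambda_m^+-\mu=\sum_i c_i\alpha_i$ with $c_i\geq 0$ forces $-1\geq 0$, in every characteristic.

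Note that $\lambda_{-1}+\rho$ is in fact singular, but its repeated entries are not adjacent, so Proposition \ref{p positive characteristic reflection formula}(2) does not apply directly. Proposition \ref{p positive characteristic reflection formula}(1) is available only for $p\geq 3$ when $m=-1$, and only for $p\geq 5$ when $m=-2$. This is why the linkage argument is the one that works uniformly in $p$. Your Serre-duality detour for $m=-1$ is too vague to fill the gap and is not needed.
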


\begin{proof}
The assertion (1) follows from
\cite[Proposition 8.2]{KT26}.

Let us prove (2). 
Fix $i\geq 0$ and $m \in \{-1, -2\}$. 
In what follows, we use Notation \ref{n weight lattice on GL5}. 
By Lemma \ref{l genus 6 line bundle pushforwards}, 
the following holds for $\lambda_m := [0,0,-1,1-m,-m]$: 
%we get 
\[
H^i(\Sigma, \Omega^1_\Sigma(m)) 
\simeq H^i(F, \cL_{\omega_2 +(m-2)\omega_3+\omega_4})
\simeq 
H^i(F, \cL_{\lambda_m}). 
\]
%for every $m \in \Z$. 
Suppose that $H^i(F, \cL_{\lambda_m}) \neq 0$. 
It suffices to derive a contradiction. 
Let $\lambda^+_m$ denote the unique weight such that $\lambda^+_m +\rho$ is dominant 
and $\lambda^+_m = w\cdot \lambda_m$ for some $w \in W$. 
We can find a dominant weight $\mu \in X(T)$ such that 
$L(\mu)$ is a simple composition factor of $H^i(F, \cL_{\lambda_m})$. 
Then 
the strong
linkage principle (Theorem \ref{t SLP}) 
implies 
$\mu\leq\lambda^+_m$, that is,
\[
\lambda^+_m-\mu
=
\sum_{i=1}^4c_i\alpha_i
\]
for some $c_1,c_2,c_3,c_4\in\mathbb Z_{\geq0}$.
By direct computation, we obtain the following table:
\[
\begin{array}{c|c|c|c}
m & \lambda_m+\rho & \lambda_m^++\rho & \lambda_m^+\\
\hline
-1
&[4,3,1,3,1]
&[4,3,3,1,1]
&[0,0,1,0,1]= -\omega_2 + \omega_3-\omega_4
\\[2pt]
-2
&[4,3,1,4,2]
&[4,4,3,2,1]
&[0,1,1,1,1] =-\omega_1.
\end{array}
\]
Set
$\eta
:=
\alpha_1^\vee+\alpha_2^\vee+\alpha_3^\vee+\alpha_4^\vee$. 
By $\langle \omega_i,\eta\rangle=1$ for every $i\in\{1,2,3,4\}$, 
we get $\langle\lambda_m^+,\eta\rangle=-1$. %for $r\in\{1,2\}$.
On the other hand, 
we have  $\la \mu, \eta \ra \geq 0$ as $\mu$ is dominant. 
By using the fact that $( \la \alpha_i, \alpha_j^\vee\ra)$
 is the Cartan matrix of type $A_4$, we see that $\la \alpha_i, \eta \ra \geq 0$ for every $i \in \{1,2,3,4\}$. 
To summarise, we obtain the following contradiction: 
\[
-1 
= \langle\lambda_m^+,\eta\rangle 
=\langle \mu + \sum_{i=1}^4 c_i\alpha_i,\eta\rangle 
=\la \mu, \eta\rangle + \sum_{i=1}^4 c_i \la \alpha_i,  \eta\ra \geq 0+\sum_{i=1}^4 c_i \cdot 0
=0. 
\]

\qedhere

\end{proof}

\begin{lem}\label{l genus 6 Grassmannian Omega2}
Assume $p \neq 2$. 
Set $\Sigma:=\Gr(2,5)$. 
%and let $H$ be the Pl\"ucker divisor.
%Assume that $p\neq2$.
Then $H^i(
\Sigma,
\Omega_\Sigma^2(m)
)
=0$ for every $i>0$ and every $m>0$.
\end{lem}

\begin{proof}
For the universal subbundle $\cU$ (resp.\ quotient bundle $\cQ$) on $\Sigma = \Gr(2, 5)$, 
the following hold: 
\begin{itemize}
\item $0
\to
\cU
\to
\cO_\Sigma^{\oplus5}
\to
\cQ
\to
0$. 
\item $\rank\cU=2$ and $\rank\cQ=3$.
\item $\Omega_\Sigma^1\simeq\cU\otimes\cQ^\vee.$ 
\item $\cU^\vee \simeq \cU(1)$ and $\wedge^2 \cQ^\vee \simeq \cQ(-1)$. 
\end{itemize}
Since $p\neq2$, we obtain
\[
\Omega_\Sigma^2
\simeq
\wedge^2\cU\otimes\Sym^2\cQ^\vee \oplus
\Sym^2\cU\otimes\wedge^2\cQ^\vee. 
\]
Hence it is enough to show (1) and (2) below. 
\begin{enumerate}
\item $H^{>0}(\Sigma, \wedge^2\cU\otimes\Sym^2\cQ^\vee (m))=0$ for every $m>0$. 
\item $H^{>0}(\Sigma, \Sym^2\cU\otimes\wedge^2\cQ^\vee (m))=0$ for every $m>0$. 
\end{enumerate}

Let us show (1). 
As $\wedge^2\cU\simeq\cO_\Sigma(-1)$, we have
\[
\wedge^2\cU\otimes\Sym^2\cQ^\vee(m)
\simeq
\Sym^2\cQ^\vee(m-1).
\]
We use the same notation as in the proof of Lemma \ref{l genus 6 line bundle pushforwards}. 
Then $-K_{E_3} \sim 3\omega_2 + 3 \omega_3$ and 
\[
H^{>0}(\Sigma, \wedge^2\cU\otimes\Sym^2\cQ^\vee(m)) 
\simeq 
H^{>0}(\Sigma, \Sym^2\cQ^\vee(m-1)) 
\overset{{\rm (i)}}{\simeq} 
H^{>0}(F, \cL_{2\omega_2 +(m-3)\omega_3}) 
\]
\[
\simeq 
H^{>0}(E_3, \cL_{2\omega_2 +(m-3)\omega_3}) 
\overset{{\rm (ii)}}{=} 0, 
\]
where (i) follows from Lemma \ref{l genus 6 line bundle pushforwards} and (ii) holds by the fact that 
the divisor $2\omega_2 +(m-3)\omega_3 -K_{E_3} \sim 5\omega_2 +m\omega_3$ on $E_3$ is ample. 
Thus (1) holds.

Let us show (2). 
As $\wedge^2\cQ^\vee(1)\simeq\cQ$, we have 
\[
\Sym^2\cU\otimes\wedge^2\cQ^\vee (m) \simeq \Sym^2\cU\otimes \cQ(m-1). 
\]
For $F' := \Fl(1, 2, 4; 5)$, we have 
$-K_{F'} \sim 3\omega_1 + 3\omega_3  + 2 \omega_4$ and 
\[
H^{>0}(\Sigma,\Sym^2\cU\otimes\wedge^2\cQ^\vee (m)) 
\simeq 
H^{>0}(\Sigma, \Sym^2\cU\otimes \cQ(m-1)) 
\overset{{\rm (iii)}}{\simeq} 
H^{>0}(F, \cL_{\omega_1 +(m-3)\omega_3+2\omega_4}) 
\]
\[
\simeq 
H^{>0}(F', \cL_{\omega_1 +(m-3)\omega_3+2\omega_4}) 
\overset{{\rm (iv)}}{=} 0, 
\]
where (iii) follows from Lemma \ref{l genus 6 line bundle pushforwards} and (iv) holds by the fact that 
the divisor $\omega_1 +(m-3)\omega_3+2\omega_4 -K_{F'} 
\sim 4\omega_1 +m\omega_3 +4\omega_4$ on $F'$ is ample. 
Thus (2) holds. 
\end{proof}

\subsection{Complete-intersection case}\label{ss g=6 CI}

We first treat the complete-intersection case.

\begin{prop}\label{p genus 6 ordinary}
Assume $p>3$. Let $\Sigma := \Gr(2, 5) \subset \P^9$ be the Pl\"{u}cker embedding. 
Let $X$ be a prime Fano threefold of genus $6$ 
which is a complete intersection 
$\Sigma\cap H_1\cap H_2\cap Q$, 
where  $H_1$ and $H_2$ are hyperplanes and $Q$ is a quadric hypersurface on $\P^9$. 
Then
$H^1(
X,
\Omega_X^2(pH)
)
=0.$ 
\end{prop}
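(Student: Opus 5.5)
We follow the pattern of Propositions \ref{p genus 4 F-split} and \ref{p genus 5 Omega2}, with $\Sigma=\Gr(2,5)$ in place of projective space. Write $\cO(1)=\cO_\Sigma(H)$. The conormal sequence of $X\subset\Sigma$ reads
\[
0\to \cE\to \Omega^1_\Sigma|_X\to\Omega^1_X\to 0,\qquad \cE:=\cO_X(-1)^{\oplus 2}\oplus\cO_X(-2).
\]
Taking $\wedge^2$ gives exact sequences
\[
0\to\cF\to\Omega^2_\Sigma|_X\to\Omega^2_X\to0,\qquad 0\to\wedge^2\cE\to\cF\to\cE\otimes\Omega^1_X\to0,
\]
where $\wedge^2\cE\simeq\cO_X(-2)\oplus\cO_X(-3)^{\oplus2}$.

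After twisting by $\cO(p)$, it suffices to prove three vanishings:
\begin{enumerate}
\item $H^2(X,\wedge^2\cE(p))=0$. This holds because $H^2(X,\cO_X(n))=0$ for all $n$.
\item $H^2(X,\cE\otimes\Omega^1_X(p))=0$. This is $H^2(X,\Omega^1_X(p-1))\oplus H^2(X,\Omega^1_X(p-2))$, which vanishes by Proposition \ref{p quasi-F-split quadrics}(1) since $p-2\ge 3\ge2$.
\item $H^1(X,\Omega^2_\Sigma(p)|_X)=0$. This is the main step.
\end{enumerate}

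For the third vanishing, use the Koszul resolution of $\cO_X$ on $\Sigma$ with respect to the section of $\cO(1)^{\oplus2}\oplus\cO(2)$, tensored with $\Omega^2_\Sigma(p)$:
\[
0\to\Omega^2_\Sigma(p-4)\to\Omega^2_\Sigma(p-3)^{\oplus2}\oplus\Omega^2_\Sigma(p-2)\to\Omega^2_\Sigma(p-1)\oplus\Omega^2_\Sigma(p-2)^{\oplus2}\oplus\Omega^2_\Sigma(p-1)\ldots
\]
The terms are
\begin{itemize}
\item degree $0$: $\Omega^2_\Sigma(p)$;
\item degree $-1$: $\Omega^2_\Sigma(p-1)^{\oplus2}\oplus\Omega^2_\Sigma(p-2)$;
\item degree $-2$: $\Omega^2_\Sigma(p-2)\oplus\Omega^2_\Sigma(p-3)^{\oplus2}$;
\item degree $-3$: $\Omega^2_\Sigma(p-4)$.
\end{itemize}
Chasing the resolution, it is enough to show $H^{1+\ell}(\Sigma,\Omega^2_\Sigma(p-j))=0$ for every twist $p-j$ occurring in degree $-\ell$, $\ell\in\{0,1,2,3\}$.

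Since $p\ge5$, every twist is at least $p-4\ge1$, so all these groups vanish by Lemma \ref{l genus 6 Grassmannian Omega2} ($p\neq2$). This gives $H^{>0}(X,\Omega^2_\Sigma(p)|_X)=0$, and the long exact sequences then give $H^1(X,\Omega^2_X(p))=0$.

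The only real obstacle is that the twists must stay positive so that Lemma \ref{l genus 6 Grassmannian Omega2} applies. The smallest twist is $p-4$ in the top Koszul term, which is positive exactly because $p>3$; this is where the hypothesis enters. If positivity had failed, we would instead compute the group on $F=\SL_5/B$ via Lemma \ref{l genus 6 line bundle pushforwards} and apply the strong linkage principle, as in Lemma \ref{l genus 6 Grassmannian Omega1}. That is not needed here.
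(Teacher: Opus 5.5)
Your proposal is correct and is essentially the paper's proof: the same $\wedge^2$ of the conormal sequence of $X\subset\Sigma$, the same three reductions, and the same Koszul argument in which every twist of $\Omega^2_\Sigma$ is at least $p-4\geq 1$, so Lemma \ref{l genus 6 Grassmannian Omega2} gives all the needed vanishings. The only slips are cosmetic: the third term of your displayed Koszul complex is garbled (your itemized list of terms is the correct one), and $\cE\otimes\Omega^1_X(p)$ contains $\Omega^1_X(p-1)$ with multiplicity two, which does not affect the vanishing.
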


\begin{proof}
%We identify the restriction of $H$ to $X$ with $H$.
Set $\cE:=
\cN_{X/\Sigma}^{\vee}
\simeq
\cO_X(-1)^{\oplus2}
\oplus
\cO_X(-2)$. 
Applying $\wedge^2$ to the conormal exact sequence 
$0
\to
\cE
\to
\Omega_\Sigma^1|_X
\to
\Omega_X^1
\to
0,$ we get the following exact sequences for some vector bundle $\cF$:
\[
0
\to
\cF
\to
\Omega_\Sigma^2|_X
\to
\Omega_X^2
\to
0, \quad 
0
\to
\wedge^2\cE
\to
\cF
\to
\cE\otimes\Omega_X^1
\to
0.
\]
Hence it is enough to prove (i)-(iii) below. 
\begin{enumerate}
\item[(i)] $H^1(X, \Omega^2_\Sigma(p)|_X)=0$. 
\item[(ii)] $H^2(X, \wedge^2 \cE(p))=0$. 
\item[(iii)] $H^2(X, \cE \otimes \Omega^1_X(p))=0$. 
\end{enumerate}
By using $\wedge^2 \cE \simeq \MO_X(-2) \oplus \MO_X(-3)^{\oplus 2}$, 
the assertions (ii) and (iii) follow from 
Theorem \ref{t Fano3 ANV} and 
Proposition \ref{p quasi-F-split quadrics}, respectively.

Let us show (i). 
The Koszul resolution of $X\subset \Sigma$ is given as follows: 
\[
0
\to
\cO_\Sigma(-4)
\to
 \MO_{\Sigma}(-2) \oplus \MO_{\Sigma}(-3)^{\oplus 2}
\to
\cO_\Sigma(-1)^{\oplus2}
\oplus
\cO_\Sigma(-2)
\to
\cO_\Sigma
\to
\cO_X
\to
0.
\]
By taking the decomposition into the corresponding short exact sequences, it is enough to prove
\[
\begin{aligned}
&H^1(\Sigma,\Omega_\Sigma^2(p))=0,\\
&H^2(\Sigma,\Omega_\Sigma^2(p-1))
=
H^2(\Sigma,\Omega_\Sigma^2(p-2))=0,\\
&H^3(\Sigma,\Omega_\Sigma^2(p-2))
=
H^3(\Sigma,\Omega_\Sigma^2(p-3))=0,\\
&H^4(\Sigma,\Omega_\Sigma^2(p-4))=0.
\end{aligned}
\]
Since $p > 4$, all the twists appearing above are positive, 
and hence these cohomologies vanish by  Lemma \ref{l genus 6 Grassmannian Omega2}. 
Thus (i) holds. 
\end{proof}

\subsection{Double-cover case}\label{ss g=6 2-to-1}

We next treat the double-cover case.
We first establish the vanishings on the del Pezzo threefold of degree $5$
that will be used below.

\begin{lem}\label{l genus 6 V5}
Assume $p \neq 2$. 
Let $V$ be the smooth quintic del Pezzo threefold.
Then the following hold:
\begin{enumerate}
\item 
$H^1(V,\cO_V(m))= H^2(V, \MO_V(m))=0$ for every $m \in \Z$ and 
$H^3(V, \MO_V(m))=0$ for every $m \geq -1$. 
\item 
$H^i(V,\Omega_V^1(m))=0$ for every $i>0$ and $m \geq 1$. 
% one of the following holds: 
% \begin{itemize}
% \item $i=1$ and $m\geq 2$. 
% \item $i\in\{2,3\}$ and $m\geq1$. 
% \end{itemize}
\item
$H^i(V,\Omega_V^2(m))=0$ for $i >0$ and $m\geq3$.
\end{enumerate}
\end{lem}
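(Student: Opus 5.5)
We realise $V$ as a codimension-$3$ linear section $V=\Sigma\cap H_1\cap H_2\cap H_3$ of $\Sigma=\Gr(2,5)\subset\P^9$, with $\MO_V(1)=\MO_\Sigma(1)|_V$ and $K_V\sim -2H$. The strategy is the one already used for Proposition \ref{p genus 6 ordinary}. Relate $\Omega^i_V$ to $\Omega^i_\Sigma|_V$ through the conormal sequence $0\to\MO_V(-1)^{\oplus3}\to\Omega^1_\Sigma|_V\to\Omega^1_V\to0$ and its $\wedge^2$. Then pass to $\Sigma$ through the Koszul resolution
\[
0\to\MO_\Sigma(-3)\to\MO_\Sigma(-2)^{\oplus3}\to\MO_\Sigma(-1)^{\oplus3}\to\MO_\Sigma\to\MO_V\to0 .
\]
On $\Sigma$ itself we will use Lemma \ref{l genus 6 Grassmannian Omega1} and Lemma \ref{l genus 6 Grassmannian Omega2}.

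For (1), split the Koszul resolution twisted by $m$ into short exact sequences. This reduces the claim to $H^{j}(\Sigma,\MO_\Sigma(n))=0$ for $0<j<6$ and all $n$, together with $H^6(\Sigma,\MO_\Sigma(n))=0$ for $n\geq -5$. Both follow from Kodaira vanishing on the flag variety $\Sigma$ (Proposition \ref{p flag Kodaira}) and Serre duality, using $K_\Sigma=\MO_\Sigma(-5)$. Tracking degrees gives $H^3(V,\MO_V(m))\simeq H^6(\Sigma,\MO_\Sigma(m-3))$, which vanishes for $m-3\geq -4$, i.e.\ $m\geq -1$. Alternatively this last vanishing is Serre duality on $V$: $H^0(V,\MO_V(-m-2))=0$.

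For (2), fix $m\geq1$. From the conormal sequence it suffices to show $H^i(V,\Omega^1_\Sigma(m)|_V)=0$ for $i>0$ and $H^{i+1}(V,\MO_V(m-1))=0$. The latter holds by (1), since $m-1\geq 0\geq -1$. For the former, the Koszul resolution reduces us to $H^{i+\ell}(\Sigma,\Omega^1_\Sigma(m-\ell))=0$ for $0\leq\ell\leq3$. When $m-\ell>0$ this is Lemma \ref{l genus 6 Grassmannian Omega1}(1), and when $m-\ell\in\{-1,-2\}$ it is Lemma \ref{l genus 6 Grassmannian Omega1}(2). The only remaining case is $m=\ell$, i.e.\ $H^{i+\ell}(\Sigma,\Omega^1_\Sigma)$ with $i+\ell\geq 2$, and this vanishes by Remark \ref{r non-diag vanishing}(1) because $i+\ell\neq1$.

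For (3), fix $m\geq 3$. As in the genus-$5$ argument, $\wedge^2$ of the conormal sequence gives
\[
0\to\cF\to\Omega^2_\Sigma|_V\to\Omega^2_V\to0,\qquad 0\to\MO_V(-2)^{\oplus3}\to\cF\to\Omega^1_V(-1)^{\oplus3}\to0 .
\]
The required $H^{i+1}(V,\cF(m))=0$ follows from (1) and from (2) applied with twist $m-1\geq2$. It then remains to show $H^{i+\ell}(\Sigma,\Omega^2_\Sigma(m-\ell))=0$ for $0\leq\ell\leq3$. When $m-\ell>0$ this is Lemma \ref{l genus 6 Grassmannian Omega2}, which uses $p\neq2$. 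Since $m\geq 3$, the only other possibility is $m=\ell=3$, which requires $H^{i+3}(\Sigma,\Omega^2_\Sigma)=0$; this holds by Remark \ref{r non-diag vanishing}(1) because $i+3\geq4\neq2$.

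The main obstacle is the bookkeeping of the boundary twists $m-\ell\leq0$. The bound $m\geq3$ in (3) is exactly what avoids needing vanishing of $\Omega^2_\Sigma(-1)$ and $\Omega^2_\Sigma(-2)$, which the paper has not established. In (2), by contrast, the twists $-1$ and $-2$ do occur (for $m=1,2$), and they are covered precisely by Lemma \ref{l genus 6 Grassmannian Omega1}(2).
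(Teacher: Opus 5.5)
Your proof is correct, and for (2) and (3) it follows the paper's argument essentially verbatim; for (1) the paper applies Kodaira vanishing on $V$ itself (Theorem \ref{t Fano3 ANV}) plus Serre duality, while your Koszul detour through $\Sigma$ works equally well. There are two harmless slips in (1): first, $H^6(\Sigma,\MO_\Sigma(-5))\simeq H^0(\Sigma,\MO_\Sigma)^\vee\neq0$, so the correct $H^6$-vanishing range is $n\geq-4$, which is the range you actually use; second, the Koszul chase gives in general only an injection $H^3(V,\MO_V(m))\hookrightarrow H^6(\Sigma,\MO_\Sigma(m-3))$ rather than an isomorphism, but an injection is all you need.
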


\begin{proof}
Let $\Sigma := \Gr(2, 5) \subset \P^9$ denote the Pl\"{u}cker embedding. 
We may assume 
\[
V=
\Sigma\cap H_1\cap H_2\cap H_3,
\]
for some hyperplanes $H_1, H_2, H_3$ on $\Sigma$. 
The assertion (1) follows from the Kodaira vanishing theorem for $V$ (Theorem \ref{t Fano3 ANV}).

Let us show (2). 
Fix $i>0$ and $m\geq 1$. 
%$m$ and $i$ as in the statement. 
By $m \geq 1$, (1) implies  $H^{>0}(V, \MO_V(m-1))=0$. 
By the conormal exact sequence
\[
0
\to
\cO_V(-1)^{\oplus3}
\to
\Omega_\Sigma^1|_V
\to
\Omega_V^1
\to
0, 
\]
it suffices to show $H^i(V, \Omega_\Sigma^1(m)|_V) =0$. 
%$H^i(V, \Omega_\Sigma^1(m)|_V) \xrightarrow{\simeq} H^i(V, \Omega_V^1(m))$. 
We have the Koszul exact sequence 
\[
0 
\to \MO_\Sigma(-3) 
\to \MO_\Sigma(-2)^{\oplus 3} 
\to \MO_\Sigma(-1)^{\oplus 3} \to  \MO_\Sigma \to \MO_V \to 0. 
\]
Hence the problem is reduced to showing 
\[
H^{i+\ell}(\Sigma, \Omega_\Sigma^1(m-\ell)) = 0
\]
for every $\ell \in \{0, 1, 2, 3\}$. 
Since $i+\ell >0$, 
this follows from Lemma \ref{l genus 6 Grassmannian Omega1} when $m- \ell \neq 0$. 
If $m -\ell =0$, then we have $i+\ell =i+m \geq 2$, and hence 
the equality $H^{i+\ell}(\Sigma, \Omega_\Sigma^1(m-\ell)) = 0$  holds by 
Remark \ref{r non-diag vanishing}. 
Thus (2) holds.

Let us show (3). 
Fix $i >0$ and $m \geq 3$. 
Applying $\wedge^2$ to the conormal exact sequence, we obtain
the following exact sequences for some vector bundle $\cF$:
\[
0
\to
\cF
\to
\Omega_\Sigma^2|_V
\to
\Omega_V^2
\to
0, \qquad 
0
\to
\cO_V(-2)^{\oplus3}
\to
\cF
\to
\Omega_V^1(-1)^{\oplus3}
\to
0.
\]
By (1) and (2), we get $H^{>0}(V, \MO_V(m-2))= H^{>0}(V, \Omega_V^1(m-1))=0$, 
which implies $H^{>0}(V, \cF(m))=0$. 
Hence it suffices to show $H^i(V, \Omega^2_\Sigma(m)|_V)=0$. 
By the above Koszul exact sequence, the problem is reduced to showing 
\[
H^{i+\ell}(\Sigma, \Omega^2_\Sigma(m-\ell))=0
\]
for every $\ell \in \{0, 1, 2, 3\}$. 
If $m-\ell >0$, then this follows from Lemma \ref{l genus 6 Grassmannian Omega2}. 
Otherwise, we get $m=\ell =3$ and $i+\ell \geq 1 + 3=4$. 
In this case, the equality $H^{i+\ell}(\Sigma, \Omega^2_\Sigma(m-\ell))=0$ follows from 
$i+\ell \neq 2$ (Remark \ref{r non-diag vanishing}). 
Thus (3) holds. 
\end{proof}

\begin{prop}\label{p genus 6 special}
Assume $p>3$. 
Let $X$ be a prime Fano threefold of genus $6$ which has a separable double cover
\[
f\colon X\to V
\]
onto the smooth quintic del Pezzo threefold $V$.
Then $H^1(
X,
\Omega_X^2(p)
)
=0.$ 
\end{prop}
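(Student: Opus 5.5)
Since $f$ is finite, I will compute $H^1(X,\Omega_X^2(p))$ as $H^1(V, f_*\Omega_X^2\otimes\MO_V(p))$, writing $\MO_X(1)=f^*\MO_V(1)$; this is consistent because $-K_X = f^*H$ and $X$ has genus $6$ with $(-K_X)^3 = 2\cdot 5 = 10$. The double cover $f$ is separable with $p\neq 2$, so it is given by $\MO_X = \MO_V\oplus \MO_V(-1)$, branched along a smooth divisor $B\in|\MO_V(2)|$. Indeed $K_X = f^*(K_V + \tfrac12 B) = f^*(-2H+H)=-f^*H$ forces $B\sim 2H$. The ramification divisor $R\subset X$ maps isomorphically onto $B$.

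First I use the standard exact sequences for a double cover branched along a smooth divisor:
\[
0\to f^*\Omega_V^1\to \Omega_X^1\to \MO_R(-R)\to 0,\qquad 0\to f^*\Omega^2_V \to \Omega^2_X\to \Omega^1_R(-R)\to 0 .
\]
Alternatively, I can use the $\mu_2$-eigen-decomposition $f_*\Omega_X^2 \simeq \Omega_V^2\oplus \Omega_V^2(\log B)(-1)$. With the second sequence, it suffices to show:

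(a) $H^1(X, f^*\Omega_V^2(p))=H^1(V,\Omega_V^2(p))\oplus H^1(V,\Omega_V^2(p-1))=0$;

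(b) $H^1(R,\Omega_R^1(p)(-R))=0$.

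Part (a) follows from Lemma \ref{l genus 6 V5}(3), since $p-1\geq 4\geq 3$ when $p>3$. For (b), $R\simeq B$ and $\MO_X(R)|_R\simeq \MO_B(1)$, so (b) becomes $H^1(B,\Omega_B^1(p-1))=0$. Here $B\in|\MO_V(2)|$ is a smooth surface.

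To prove (b), I will use the conormal sequence $0\to\MO_B(-2)\to\Omega_V^1|_B\to\Omega_B^1\to 0$. This reduces (b) to $H^1(B,\Omega_V^1(p-1)|_B)=0$ and $H^2(B,\MO_B(p-3))=0$. The latter holds by Serre duality on $B$: we have $K_B=\MO_B(-2+2)=\MO_B$, so $H^2(B,\MO_B(p-3))\simeq H^0(B,\MO_B(3-p))^\vee=0$ for $p>3$. For the former, the restriction sequence $0\to\Omega_V^1(p-3)\to\Omega_V^1(p-1)\to\Omega_V^1(p-1)|_B\to0$ reduces it to $H^1(V,\Omega_V^1(p-1))=0$ and $H^2(V,\Omega_V^1(p-3))=0$. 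Both follow from Lemma \ref{l genus 6 V5}(2), since $p-3\geq 2\geq1$.

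The main obstacle is justifying the exact sequences for $\Omega^2_X$ of a double cover, i.e.\ the identification of the cokernel of $f^*\Omega_V^2\to\Omega_X^2$ as $\Omega^1_R\otimes\MO_X(-R)|_R$, together with the twist bookkeeping $\MO_X(R)|_R\simeq\MO_B(1)$. I would verify this locally: near $R$ we have $X:\ t^2=s$ with $s$ a local equation of $B$, so $f^*ds=2t\,dt$, and the computation is characteristic-free for $p\neq2$. If this proves delicate, I will instead use the eigensheaf decomposition and the residue sequence $0\to\Omega_V^2\to\Omega_V^2(\log B)\to\Omega_B^1\to0$. This route yields the same two vanishings, now with twist $p-1$ on $V$, and these still hold by Lemma \ref{l genus 6 V5}.
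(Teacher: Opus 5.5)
Your argument is correct, but it follows a different route from the paper.

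\textbf{The paper's route.} The paper records eight vanishings on $V$, all consequences of Lemma \ref{l genus 6 V5}. It then invokes the general double-cover criterion \cite[Proposition 6.21]{KTLift2} as a black box.

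\textbf{Your route.} You prove the statement directly. The sequence $0\to f^*\Omega^2_V\to\Omega^2_X\to\Omega^1_R(-R)\to 0$ (equivalently $f_*\Omega^2_X\simeq\Omega^2_V\oplus\Omega^2_V(\log B)(-1)$ together with the residue sequence) reduces everything to three groups: $H^1(V,\Omega^2_V(p))$, $H^1(V,\Omega^2_V(p-1))$, and $H^1(B,\Omega^1_B(p-1))$ on the branch surface $B\in|\MO_V(2)|$. You then handle the last group with the conormal and restriction sequences, together with $\omega_B\simeq\MO_B$.

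\textbf{Comparison.} Your argument is self-contained. It needs only four vanishings on $V$: $H^1(V,\Omega^2_V(p))$, $H^1(V,\Omega^2_V(p-1))$, $H^1(V,\Omega^1_V(p-1))$ and $H^2(V,\Omega^1_V(p-3))$, plus $H^0(B,\MO_B(3-p))=0$. That last input comes from Serre duality on $B$ and relies on the specific numerology $-K_V\sim 2H$, $B\in|2H|$. Alternatively, you could obtain it from $H^2(V,\MO_V(p-3))=H^3(V,\MO_V(p-5))=0$, which are in the paper's list. The paper's proof is shorter on the page and uses a criterion from \cite{KTLift2} formulated for double covers in general. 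The cost is a longer list of vanishings to check on $V$, for example $H^2(V,\Omega^2_V(p-2))$ and $H^3(V,\Omega^1_V(p-4))$.

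\textbf{One point to make explicit.} You should justify $\MO_X(R)\simeq f^*\MO_V(1)$, which gives the twist $\Omega^1_B(p-1)$. There are two quick ways. You can note that $R$ is the zero locus of the tautological section $t$ of $f^*\MO_V(1)$ in the standard construction $t^2=s$. Or you can use $2R=f^*B\sim f^*(2H)$ together with the torsion-freeness of $\Pic X=\mathbb{Z}K_X$.
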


\begin{proof}
% {\cred Do we need the 1st paragraph?}
% Let $L$ be the ample generator of $\Pic(V)$.
% We have $-K_V\sim2L$. 
% Since $p>3$, the degree of $f$ is invertible in $k$.
% The induced $\MO_V$-module homomorphism $\cO_V\to f_*\cO_X$, 
% and the standard description of the double cover gives
% $f_*\cO_X
% \simeq
% \cO_V\oplus\cO_V(-L)$. 
% Moreover,
% $K_X
% \sim
% f^*(K_V+L)
% \sim
% -f^*L,$ 
% and hence
% $H=f^*L$. 
By Lemma \ref{l genus 6 V5}, we have
\begin{enumerate}
\item 
$H^1(V,\cO_V(p-m))=0$ for $m\in\{2,3\}$. 
\item 
$H^2(V,\cO_V(p-m))=0$ for $m\in\{1,2,3,4\}$. 
\item 
$H^3(V,\cO_V(p-m))=0$ for $m\in\{1,2,3,4,5\}$. 
\item 
$H^1(V,\Omega_V^1(p-m))=0$ for $m\in\{1,2,3\}$, 
\item 
$H^2(V,\Omega_V^1(p-m))=0$ for $m\in\{2,3,4\}$. 
\item 
$H^3(V,\Omega_V^1(p-4))=0$. 
\item 
$H^1(V,\Omega_V^2(p-n))=0$ for $n\in\{0,1,2\}$. 
\item 
$H^2(V,\Omega_V^2(p-2))=0.$ 
\end{enumerate}
Thus \cite[Proposition 6.21]{KTLift2} is applicable, and hence 
$H^1(
X,
\Omega_X^2(p)
)
=0.$ 
\end{proof}

\begin{thm}\label{t genus 6 F-split}
Assume $p>3$. 
Let $X$ be a prime Fano threefold of genus $6$. 
%over an algebraicallyclosed field of characteristic $p>3$.
Then $X$ is $F$-split.
\end{thm}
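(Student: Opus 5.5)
The proof is a direct assembly of the results of this section. By Proposition \ref{p quasi-F-split quadrics}(3), which applies because $g=6\geq 5$, it is enough to show $H^1(X, \Omega_X^2(p))=0$. That proposition already supplies the other input to the Cartier operator criterion (Proposition \ref{p GFS criterion}(2)), namely the vanishing $H^2(X,\Omega^1_X(p))=0$ coming from $p\geq 2$. Here $\MO_X(1)=\MO_X(-K_X)$, so $\Omega^2_X(p)=\Omega^2_X(-pK_X)$, matching the twist in the criterion.

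To obtain $H^1(X,\Omega^2_X(p))=0$, I would use the dichotomy recalled at the start of this section, from \cite[Theorem 5.5]{KTLift1}: either $X$ is a complete intersection $\Sigma\cap H_1\cap H_2\cap Q$ in $\Sigma=\Gr(2,5)$, or there is a separable double cover $f\colon X\to V$ onto the quintic del Pezzo threefold.

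\begin{enumerate}
\item In the complete-intersection case, Proposition \ref{p genus 6 ordinary} gives $H^1(X,\Omega^2_X(pH))=0$ for $p>3$. Here $H$ restricts to $-K_X$, since by adjunction $K_X=(K_\Sigma+2H+2H)|_X=(-5H+4H)|_X=-H|_X$.
\item In the double-cover case, Proposition \ref{p genus 6 special} gives the same vanishing for $p>3$.
\end{enumerate}

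The only point needing care is this identification of twists, namely that $\MO_X(1)$ in both propositions is the anticanonical polarization. In the double-cover case this is the standard relation $H=f^*L$ with $-K_X\sim f^*L$. With that checked, combining the two cases with Proposition \ref{p quasi-F-split quadrics}(3) shows that $X$ is $F$-split. I expect no real obstacle, since the substantive cohomological work was done in Lemmas \ref{l genus 6 Grassmannian Omega2} and \ref{l genus 6 V5}.
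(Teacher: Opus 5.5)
Your proposal is correct and matches the paper's proof exactly: reduce via Proposition \ref{p quasi-F-split quadrics}(3) to $H^1(X,\Omega_X^2(p))=0$, then use the dichotomy of \cite[Theorem 5.5]{KTLift1} and apply Proposition \ref{p genus 6 ordinary} or Proposition \ref{p genus 6 special}. Your check that $\MO_X(1)$ is the anticanonical polarization in both cases (via adjunction, and via $-K_X\sim f^*L$) is correct; the paper leaves it implicit.
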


\begin{proof}
%By {\cred ref}, 
By \cite[Theorem 5.5]{KTLift1}, 
Proposition \ref{p genus 6 ordinary} or
Proposition \ref{p genus 6 special} is applicable, and hence 
$H^1(
X,
\Omega_X^2(p)
)
=0$. 
Then  $X$ is $F$-split by Proposition \ref{p quasi-F-split quadrics}. 
%The $F$-splitting criterion therefore implies that $X$ is $F$-split.
\end{proof}

\section{Genus 7}

The purpose of this section is to prove that a prime Fano threefold of genus $7$ is $F$-split when $p>2$.

\begin{nota}\label{nota g=7}
Assume that $p\neq 2$.
Fix the following $10 \times 10$ matrix: 
\[
J:=
\begin{pmatrix}
O&E_5\\
E_5&O
\end{pmatrix} \in \Mat_{10}(k).
\]
\begin{enumerate}
\item 
Set $G:=\SO_{10}
:=
\left\{
g\in\SL_{10}
\mathrel{}\middle|\mathrel{}
g^tJg=J
\right\}$, which is known to be a semisimple algebraic group. 
Set 
\[
T
:=
\left\{
\begin{pmatrix}
D&O\\
O&D^{-1}
\end{pmatrix}
\mathrel{}\middle|\mathrel{}
D=\diag(t_1,\ldots,t_5),\
t_i\in k^\times
\right\}, 
\]
\[
B
:=
\left\{
\begin{pmatrix}
A&AX\\
O&(A^{-1})^t
\end{pmatrix}
\mathrel{}\middle|\mathrel{}
A\in B_{\GL_5},\
X\in\Alt_5(k)
\right\}, 
\]
where $\Alt_5(k)
:=
\left\{
X\in\Mat_5(k)
\mathrel{}\middle|\mathrel{}
X^t=-X
\right\}$ and 
$B_{\GL_5}$ denotes  the lower triangular Borel subgroup of $\GL_5$.
Then $T$ is a maximal torus and $B$ is a Borel subgroup of $\SO_{10}$ satisfying $\SO_{10} \supset B \supset T$. 
\item 
Set 
 $W:=k^5\oplus 0\subset k^5\oplus k^5 =k^{10}$, 
 which is  a maximal isotropic subspace of $k^{10}$ 
 with respect to the non-degenerate symmetric bilinear form
$(v,w) \mapsto v^tJw$ induced by $J$. 
The stabiliser of $W$ is the maximal parabolic subgroup
\[
P:=\Stab_{\SO_{10}}(W)
=
\left\{
\begin{pmatrix}
A&AX\\
O&(A^{-1})^t
\end{pmatrix}
\mathrel{}\middle|\mathrel{}
A\in\GL_5,\ 
X\in\Alt_5(k)
\right\}.
\]
We have $P=L\ltimes R_u(P)$ for the  unipotent radical
\[
R_u(P)
=
\left\{
\begin{pmatrix}
I_5&X\\
O&I_5
\end{pmatrix}
\mathrel{}\middle|\mathrel{}
X\in\Alt_5(k)
\right\},
\]
and the Levi subgroup 
\[
L
:=
\left\{
\begin{pmatrix}
A&O\\
O&(A^{-1})^t
\end{pmatrix}
\mathrel{}\middle|\mathrel{}
A\in\GL_5
\right\}
\simeq\GL_5
\]
of $P$ containing $T$. Moreover, we have 
\[
B_L := B\cap L
=
\left\{
\begin{pmatrix}
A&O\\
O&(A^{-1})^t
\end{pmatrix}
\mathrel{}\middle|\mathrel{}
A\in B_{\GL_5}
\right\}
\simeq B_{\GL_5}.
\]
For $F:= G/B$ and $\Sigma := G/P$, let 
$\pi:F=G/B\to \Sigma = G/P$ be the natural projection. 
\item 
For $1\leq i\leq5$, let $\epsilon_i\in X(T)$ be the character
defined by
\[
\epsilon_i\bigl(\diag
(t_1,\ldots,t_5,t_1^{-1},\ldots,t_5^{-1})\bigr)
:=t_i.
\]
Set $\nu :=(2, 1, 1, 0, 0) := 2\epsilon_1 + \epsilon_2 + \epsilon_3 \in X(T)$ and let $\lambda \in X(B)$ be the unique extension of $\nu$. 
\end{enumerate}
\end{nota}

\begin{rem}\label{r g=7 fund wt}
We use Notation \ref{nota g=7}. 
Set
\[
e_1:=-\epsilon_5,\qquad
e_2:=-\epsilon_4,\qquad
e_3:=-\epsilon_3,\qquad
e_4:=-\epsilon_2,\qquad
e_5:=\epsilon_1.
\]
Then
\[
\alpha_i=e_i-e_{i+1}\quad(1\leq i\leq4),
\qquad
\alpha_5=e_4+e_5
\]
form a set of simple roots for which $B$ is the negative Borel
subgroup and $P$ is the reduced maximal parabolic subgroup
corresponding to $\alpha_4$.
The corresponding fundamental weights are
\[
\omega_1=e_1,\qquad
\omega_2=e_1+e_2,\qquad
\omega_3=e_1+e_2+e_3,
\]
\[
\omega_4=
\frac{e_1+e_2+e_3+e_4-e_5}{2},
\qquad
\omega_5=
\frac{e_1+e_2+e_3+e_4+e_5}{2}.
\]
Here $\omega_4$ and $\omega_5$ belong to
$X(T)\otimes_{\mathbb Z}\mathbb Q$, but not to $X(T)$.
We have 
\begin{equation}\label{eq g=7 nu ep-to-omega}
\nu =(2, 1, 1, 0, 0) = 2\epsilon_1 + \epsilon_2 + \epsilon_3 = 2e_5-e_4 -e_3 = \omega_2 + \omega_5 -3\omega_4. 
\end{equation}
\end{rem}

\begin{rem}
We use Notation \ref{nota g=7}. 
Then it is well known that 
$\Sigma$ is isomorphic to a connected component 
$\OGr^+(5, 10)$ of the orthogonal Grassmannian variety 
$\OGr(5, 10)$ 
parametrising the $5$-dimensional isotropic subspaces of $k^{10}$. 
For the universal subbundle $\cU$ on $\Sigma$, 
it is well known that 
\[
\dim \Sigma=10,\qquad
\det\mathcal U\simeq\mathcal O_\Sigma(-2),\qquad
\Omega_\Sigma^1 \overset{(\star)}{\simeq}\wedge^2\mathcal U,\qquad
\omega_\Sigma^{-1} \simeq \MO_{\Sigma}(8), 
\]
where $(\star)$ holds by the proof of 
\cite[Lemma 2.3]{MR24}. 
For the closed embedding $\Sigma \subset \P^{15}$ induced by $|\MO_{\Sigma}(1)|$, 
an arbitrary prime Fano threefold $X$ of genus $7$ is 
isomorphic to $\Sigma \cap \P^8$ for some $8$-dimensional linear subvariety $\P^8 \subset \P^{15}$ \cite[Theorem 1.2]{KTprime}. 
\end{rem}

\begin{lem}\label{l genus 7 unified v1 Schur}
Assume that $p>2$.
Let $B_{\GL_5}\subset\mathrm{GL}_5$ be the lower triangular Borel subgroup and let $U$ be the standard $5$-dimensional $\GL_5$-module.
Then the following hold: 
\begin{enumerate}
\item 
$H^0(\mathrm{GL}_5/B_{\GL_5},\cL_{(2,1,1,0,0)})$ 
is a simple $\mathrm{GL}_5$-module. 
\item There is a $\GL_5$-module isomorphism 
\[
\wedge^2(\wedge^2U)
\simeq
H^0(\mathrm{GL}_5/B_{\GL_5},\cL_{(2,1,1,0,0)}). 
\]
\end{enumerate}
\end{lem}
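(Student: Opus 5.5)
We prove (1) first and then obtain (2) from it via a nonzero homomorphism.

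\textbf{Step 1 (simplicity).} Set $\nabla := H^0(\GL_5/B_{\GL_5},\cL_{(2,1,1,0,0)})$. By Theorem \ref{t SLP}, every composition factor $L(\mu)$ of $\nabla$ satisfies $\mu\leq(2,1,1,0,0)$ and $\mu$ is $W_p$-dot-linked to $(2,1,1,0,0)$. Since $\nabla$ is a polynomial module of degree $4$, every weight of $\nabla$, and hence every such $\mu$, is a partition of $4$ with at most $5$ parts. The dominant weights $\mu\leq(2,1,1,0,0)$ of this kind are only $(2,1,1,0,0)$ and $(1,1,1,1,0)$. Our task is therefore to rule out $\mu=(1,1,1,1,0)$ when $p>2$.

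With $\rho=(4,3,2,1,0)$ we get
\[
(2,1,1,0,0)+\rho=(6,4,3,1,0), \qquad (1,1,1,1,0)+\rho=(5,4,3,2,0).
\]
Linkage would require $(6,4,3,1,0)\equiv w(5,4,3,2,0) \pmod p$ for some permutation $w$, with equal coordinate sums. Reducing mod $p$, the multisets $\{6,4,3,1,0\}$ and $\{5,4,3,2,0\}$ must agree. Cancelling the common entries $4,3,0$ leaves $\{6,1\}$ versus $\{5,2\}$ modulo $p$. This requires either $6\equiv 5$, which is impossible, or $6\equiv 2$, i.e.\ $p=2$. Hence for $p>2$ the two weights are not linked, and $\nabla=L(2,1,1,0,0)$ is simple.

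\textbf{Step 2 (the isomorphism).} First we compute dimensions. We have $\dim\wedge^2(\wedge^2U)=\binom{10}{2}=45$. By Weyl's dimension formula (characteristic-free for $\nabla$ by Kempf vanishing), $\dim\nabla$ equals the number of semistandard tableaux of shape $(2,1,1)$ in $5$ letters, which is $45$. The dimensions therefore agree.

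Next we construct a nonzero homomorphism between the two modules. The vector $v:=(u_1\wedge u_2)\wedge(u_1\wedge u_3)$ has weight $(2,1,1,0,0)$ and is a highest weight vector for the appropriate Borel, so it is killed by the positive root subgroups; this should be checked directly. Also, $(2,1,1,0,0)$ is the largest weight of $\wedge^2(\wedge^2U)$ and occurs with multiplicity one.

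By Frobenius reciprocity, $\Hom_{\GL_5}(M,\nabla)\simeq\Hom_{B}(M,k_{\lambda})$ for any module $M$. Projecting $M:=\wedge^2(\wedge^2U)$ onto its lowest or highest weight line, with the sign convention adapted to $B_{\GL_5}$ lower triangular, gives a nonzero $B$-map. This works because $(2,1,1,0,0)$ is extremal among the weights of $M$, so the complement of that weight line is $B$-stable.

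We thus obtain a nonzero map $M\to\nabla$. Since $\nabla$ is simple by Step 1, the map is surjective, and since $\dim M=\dim\nabla=45$, it is an isomorphism.

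\textbf{Main obstacle.} The delicate point is the Borel and sign conventions: deciding whether the $B$-equivariant projection goes to the highest or the lowest weight line, and whether the relevant module is $\nabla$ or its dual. If the Frobenius reciprocity direction causes trouble, the fallback is to construct an injective map $L(\lambda)\hookrightarrow M$, generated by the highest weight vector $v$, and then use the dimension equality together with simplicity in the same way.
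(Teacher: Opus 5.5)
Your proposal is correct and follows essentially the same route as the paper. Part (1) uses strong linkage plus the degree-$4$ polynomial constraint to reduce to $\mu=(1,1,1,1,0)$, which is then excluded by linkage: your residue-multiset comparison plays the role of the paper's sum-of-squares check $26-18=8\in p\mathbb{Z}$. Part (2) combines $\dim=45$ on both sides with simplicity; the only variation is that you build an explicit nonzero map $\wedge^2(\wedge^2U)\to\nabla$ by Frobenius reciprocity. That map is the projection onto the highest weight line $(2,1,1,0,0)$, whose complement is stable because the lower triangular Borel lowers weights. The paper instead observes that, since $\nu$ is the largest weight, $L(\nu)$ must be a composition factor, and by dimension it is the whole module.
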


\begin{proof}
Set $\nu:=(2,1,1,0,0)$.

Let us show (1). 
Suppose that there exists a simple composition factor $L(\mu)$ of 
$H^0(\mathrm{GL}_5/B_{\GL_5},\cL_\nu)$ of highest weight $\mu =(\mu_1, \mu_2, \mu_3, \mu_4, \mu_5) \in \Z^5$ with $\mu \neq \nu$. 
Since the weight $\nu$ occurs with multiplicity one, it is enough to derive a contradiction. 
Since the $\mathrm{GL}_5$-module
$H^0(\mathrm{GL}_5/B_{\GL_5},\cL_\nu)$ is a homogeneous polynomial representation of degree $4$, 
we get 
\begin{enumerate}
\item[(i)] $\sum_{i=1}^5 \mu_i =4$ and $\mu_1 \geq \mu_2 \geq \mu_3 \geq \mu_4 \geq \mu_5 \geq 0$. 
\end{enumerate}
As $\nu$ is dominant, so is $\nu + \rho$, and hence $\nu^+ = \nu$. 
We have $\rho = (2, 1, 0, -1, -2)$ by direct computation. 
By the strong linkage principle (Theorem \ref{t SLP}), 
the following hold: 
\begin{enumerate}
\item[(ii)] $\mu \leq \nu$, and 
\item[(iii)] 
$\nu + \rho = w(\mu + \rho)+ p\alpha$ 
for some $w \in S_5$ and $\alpha \in \Z \Phi \subset \Z^5$. 
%$\sum_{i=1}^5 \mu_i =4$ and $\mu_i\geq 0$ for every $i$. 
\end{enumerate}
As $\mu \neq \nu$, (i) and (ii) imply 
$\mu=(1,1,1,1,0)$.
%$\mu\leq\nu$, and 
%The only partitions satisfying this dominance inequality are
We then have 
%Then
\[
\nu+\rho=(4,2,1,-1,-2),
\qquad
\mu+\rho=(3,2,1,0,-2).
\]
However, these and (iii) lead to the following contradiction: 
\[
p\Z \ni (4^2 +2^2 + 1^2 +(-1)^2 +(-2)^2) - 
(3^2 +2^2 +1^2 + 0^2 +(-2)^2)= 8. 
\]
Thus (1) holds.

Let us show (2). We have 
\[
\dim \wedge^2(\wedge^2U)
=
\binom{\dim\wedge^2U}{2}
=
\binom{10}{2}
=
45.
\]
As $\nu$ is dominant, 
the Weyl dimension formula \cite[Corollary II.5.11]{Jan03} implies 
\[
\dim H^0(\mathrm{GL}_5/B_{\GL_5},\cL_\nu) 
%= \chi(\mathrm{GL}_5/B_{\GL_5},\cL_\nu) 
=
\prod_{1\leq i<j\leq5}
\frac{\nu_i-\nu_j+j-i}{j-i}
=
45.
\]
By (1), it is enough to show that $\nu$ is the largest weight of 
$\wedge^2(\wedge^2U)$. 
Let $U = \bigoplus_{i=1}^5 k u_i$ be the weight decomposition, 
where $u_1, %u_2, 
..., u_5$ are of weight 
$(1, 0, 0, 0, 0), %(0, 1, 0, 0, 0), 
..., (0, 0, 0, 0, 1)$. 
By using the weight decomposition 
\[
\wedge^2(\wedge^2 U) = 
\bigoplus_{(a, b, c, d) \in I} k (u_a \wedge u_b) \wedge (u_c \wedge u_d)
\]
for a suitable subset $I \subset \Z^4$, 
we see that $(u_1 \wedge u_2) \wedge (u_1 \wedge u_3) \in \wedge^2(\wedge^2 U)$ is of weight $\nu =(2, 1, 1, 0, 0)$ and 
$\nu$ is the largest weight of $\wedge^2(\wedge^2 U)$. 
Thus (2) holds. 
\end{proof}

\begin{lem}\label{l g=7 Omega2 lb}
We use Notation \ref{nota g=7}. 
Then $\Omega_\Sigma^2 \simeq \pi_*\cL_\lambda.$ 
\end{lem}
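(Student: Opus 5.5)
We would prove $\Omega_\Sigma^2 \simeq \pi_*\cL_\lambda$ by passing through the standard dictionary between homogeneous vector bundles on $\Sigma = G/P$ and $P$-modules. The plan has three steps: identify the $P$-module $\wedge^2(\wedge^2 \cU_o)$ at the base point $o := eP$, identify the fibre of $\pi_*\cL_\lambda$ at $o$, and compare the two using Lemma \ref{l genus 7 unified v1 Schur}.

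\textbf{Step 1: the fibre of $\Omega^2_\Sigma$.} By $(\star)$ we have $\Omega^1_\Sigma \simeq \wedge^2\cU$, hence $\Omega^2_\Sigma \simeq \wedge^2(\wedge^2 \cU)$. This bundle is the homogeneous bundle $G\times^P V$ attached to the $P$-module $V := \wedge^2(\wedge^2 \cU_o)$. Here $\cU_o = W$ is the standard representation of $L\simeq \GL_5$, or its dual; we will pin down the convention by checking against $\det \cU \simeq \cO_\Sigma(-2)$. The unipotent radical $R_u(P)$ acts trivially on $W$, since it fixes $W$ pointwise. Hence $V$ is inflated from an $L$-module.

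\textbf{Step 2: the fibre of $\pi_*\cL_\lambda$.} Since $P/B \simeq L/B_L \simeq \GL_5/B_{\GL_5}$, base change along $G\times^P(-)$ gives $\pi_*\cL_\lambda \simeq G\times^P H^0(P/B, \cL_\lambda|_{P/B})$. Its fibre is the $P$-module $H^0(L/B_L, \cL_\nu)$, with $R_u(P)$ acting trivially, because $\lambda$ is the extension of $\nu$ to $B$, which is trivial on $R_u(B)\supset R_u(P)$. By Lemma \ref{l genus 7 unified v1 Schur}(2), this $L$-module is isomorphic to $\wedge^2(\wedge^2 U)$ for $U$ the standard $\GL_5$-module. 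This gives $\pi_*\cL_\lambda \simeq \wedge^2(\wedge^2 \cU')$, where $\cU'$ is the homogeneous bundle with fibre $U$.

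\textbf{Step 3: matching conventions.} It remains to check that $\cU'$ is $\cU$ and not $\cU^\vee$. This sign/duality bookkeeping is the main obstacle. The torus acts on $W = k^5\oplus 0$ with weights $\epsilon_1,\dots,\epsilon_5$, and $L\simeq\GL_5$ acts on $W$ through $A$. So $W\simeq U$ as $L$-modules. We must still confirm that the homogeneous-bundle convention used for $\cL_\lambda$ (the $B$ acting via $\lambda$, with $H^0$ having highest weight $\nu$ relative to the lower-triangular Borel) agrees with the one producing $\cU$ from $W$ rather than from $W^\vee$. We will check this on determinants. The construction of the previous paragraph gives $\det \cU' = G\times^P \det U$, which must equal $\cO_\Sigma(-2)$. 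Equivalently, the character $\epsilon_1+\cdots+\epsilon_5 = -2\omega_4 + \text{(no other terms)}$ in the $e$-coordinates of Remark \ref{r g=7 fund wt}. Indeed, $\epsilon_1+\cdots+\epsilon_5 = e_5 - e_1 - e_2 - e_3 - e_4 = -2\omega_4$, and $\pi^*\cO_\Sigma(1)$ corresponds to $\omega_4$. This is consistent with (\ref{eq g=7 nu ep-to-omega}), whose $\omega_4$-coefficient $-3$ matches twice the degree-$(-2)$ weight of $\wedge^2\cU$ plus the $\omega_2+\omega_5$ correction, matching $\det(\wedge^2(\wedge^2\cU))$ appropriately.

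If the sign came out opposite, the fix would be to replace $U$ by $U^\vee$ and twist by the appropriate power of $\det$. The determinant check above shows that no such fix is needed, so $\Omega^2_\Sigma \simeq \pi_*\cL_\lambda$.
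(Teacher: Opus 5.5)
Your proposal is correct and follows essentially the paper's route: you identify both fibres at $o$ with $H^0(L/B_L,\cL_\nu)\simeq\wedge^2(\wedge^2 U)$ as $L$-modules via Lemma \ref{l genus 7 unified v1 Schur} and then upgrade to $P$-modules. The only difference is that you check directly that $R_u(P)$ acts trivially on both sides ($R_u(P)$ fixes $W$ pointwise, and it is a normal subgroup of $P$ contained in $\ker\lambda$), whereas the paper deduces this from simplicity via \cite[Lemma 19.16]{Mil17}. Your determinant check in Step 3 is correct but redundant once you note $W\simeq U$ as $L$-modules. The remark about the $\omega_4$-coefficient $-3$ in \eqref{eq g=7 nu ep-to-omega} is not meaningful, since $\nu$ is a highest weight rather than a determinant, and should be deleted.
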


\begin{proof}
% We may assume that $B$ is the lower triangular Borel subgroup of $\SO_{10}$. 
Note that $B_L$ is the lower triangular
Borel subgroup of $\GL_5$ as in
Lemma \ref{l genus 7 unified v1 Schur}. 
We then get $L$-module isomorphisms
\[
\Omega_{\Sigma,o}^2
\simeq
\wedge^2(\wedge^2\mathcal U_o)
\simeq 
H^0(L/B_L,\cL_{\nu}),
\]
where the second isomorphism is ensured by Lemma \ref{l genus 7 unified v1 Schur} and 
the first isomorphism is $L$-linear, because
$\Omega_\Sigma^2
\simeq
\wedge^2(\wedge^2\mathcal U)$ is a $G$-equivariant isomorphism.
On the other hand, 
we have the following $L$-module isomorphisms for $o := eP \in G/P =\Sigma$: 
\[
\pi_*\cL_{\lambda}|_o
\simeq
H^0(P/B,\cL_{\lambda})
\simeq
H^0(L/B_L,\cL_{\nu}).
\]
Hence we get an $L$-module isomorphism 
$\theta :
\Omega^2_{\Sigma, o}
\xrightarrow{\simeq}
\pi_*\cL_{\lambda}|_o$ between the $P$-modules. 
As $\Omega^2_{\Sigma, o} \simeq H^0(L/B_L,\cL_{\nu})$ is a simple $L$-module (Lemma \ref{l genus 7 unified v1 Schur}), 
both sides of $\theta$ are simple $L$-modules, and hence simple $P$-modules. 
Then the actions of the unipotent radical $R_u(P)$ of $P$ 
are trivial \cite[Lemma 19.16]{Mil17}. 
By $P=L\ltimes R_u(P)$, $\theta :
\Omega^2_{\Sigma, o}
\to 
\pi_*\cL_{\lambda}|_o$ is a $P$-module isomorphism, 
which implies $\Omega_\Sigma^2 \simeq \pi_*\cL_\lambda.$ 
\qedhere
\end{proof}

\begin{prop}\label{p g=7 ambient Omega2}
Assume $p>2$ and set $\Sigma := \OGr^+(5, 10)$. 
Then 
\[
H^i(\Sigma,\Omega_\Sigma^2(m))=0
\]
if one of the following holds: 
\begin{enumerate}
\item
$m \geq 2$ and $ i \geq 1$. 
\item
$m=1$ and $i \geq 2$. 
\item
$m=0$ and $i \neq 2$.
\item
$m\in\{-1,-2,-3,-4\}$ and $i \geq 0$. 
\end{enumerate}
\end{prop}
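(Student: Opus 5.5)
By Lemma \ref{l g=7 Omega2 lb} we have $\Omega^2_\Sigma\simeq\pi_*\cL_\lambda$. Since $P$ is the maximal parabolic subgroup attached to $\alpha_4$, we have $\pi^*\MO_\Sigma(1)\simeq\cL_{\omega_4}$. Combining this with (\ref{eq g=7 nu ep-to-omega}) and the projection formula gives $\Omega^2_\Sigma(m)\simeq\pi_*\cL_{\lambda_m}$ for
\[
\lambda_m:=\omega_2+(m-3)\omega_4+\omega_5 .
\]
The coefficients of $\omega_1,\omega_2,\omega_3,\omega_5$ in $\lambda_m$ are non-negative, so $\cL_{\lambda_m}$ is $\pi$-nef. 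Proposition \ref{p flag Kodaira} then gives $R^{>0}\pi_*\cL_{\lambda_m}=0$, and hence
\[
H^i(\Sigma,\Omega^2_\Sigma(m))\simeq H^i(F,\cL_{\lambda_m})
\]
for every $i$ and $m$. The whole proof therefore reduces to line bundle cohomology on $F=\SO_{10}/B$, which I would treat case by case.

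\emph{Case (1), $m\ge 2$.} Since $-K_F=2\rho=2\sum_j\omega_j$, we get
\[
\lambda_m-K_F=2\omega_1+3\omega_2+2\omega_3+(m-1)\omega_4+3\omega_5 .
\]
This weight is regular dominant for $m\ge 2$, so the corresponding line bundle is ample and Proposition \ref{p flag Kodaira} gives $H^{>0}(F,\cL_{\lambda_m})=0$.

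\emph{Case (2), $m=1$.} Here $\lambda_1=\omega_2-2\omega_4+\omega_5$ and $\langle\lambda_1,\alpha_4^\vee\rangle=-2\ge -p$. Proposition \ref{p positive characteristic reflection formula}(1) gives $H^i(F,\cL_{\lambda_1})\simeq H^{i-1}(F,\cL_{\lambda_1+\alpha_4})$. In type $D_5$ with the labelling of Remark \ref{r g=7 fund wt}, $\alpha_4$ is adjacent only to $\alpha_3$, so $\alpha_4=2\omega_4-\omega_3$. Hence $\lambda_1+\alpha_4=\omega_2-\omega_3+\omega_5$, which pairs to $-1$ with $\alpha_3^\vee$. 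Part (2) of the same proposition then kills all of its cohomology. This should give the required vanishing for $i\ge 2$, and in fact for all $i$; I will double-check the Cartan data at this step. As a fallback, I would instead realise $\cL_{\lambda_1}$ as $K$ plus the pullback of an ample class from a partial flag variety and apply Proposition \ref{p almost ample general}.

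\emph{Case (3), $m=0$.} This is the Hodge-type vanishing $H^i(\Sigma,\Omega^2_\Sigma)=0$ for $i\ne 2$ from Remark \ref{r non-diag vanishing}(1).

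\emph{Case (4), $m\in\{-1,-2,-3,-4\}$.} This is the main obstacle, and I would follow the strong linkage argument sketched in the introduction. I rewrite $\lambda_m$ in $e$-coordinates as $\lambda_m=(0,-1,-\tfrac{m-1}{2}\ldots)$, done explicitly for each $m$ using Remark \ref{r g=7 fund wt}. Then, with $\rho=(4,3,2,1,0)$, I compute $\lambda_m+\rho$. The Weyl group of $D_5$ acts by permutations and even sign changes, so sorting absolute values (with the sign parity constraint) gives $\lambda_m^++\rho$ and hence $\lambda_m^+$. If $\lambda_m+\rho$ has a repeated absolute value, or a zero entry paired badly, then it lies on a wall and all cohomology vanishes immediately by Proposition \ref{p positive characteristic reflection formula}(2) applied after reflections.

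Otherwise, suppose $L(\mu)$ is a composition factor of some $H^i$. Theorem \ref{t SLP} gives $\lambda_m^+-\mu=\sum_j c_j\alpha_j$ with $c_j\ge 0$. For each $m$ I would look for a linear functional $f$ with the following three properties:
\begin{itemize}
\item $f(\alpha_j)\ge 0$ for all $j$;
\item $f\ge 0$ on the dominant cone;
\item $f(\lambda_m^+)<0$.
\end{itemize}
Such an $f$ yields a contradiction exactly as in the overview. Natural candidates are $f=x_1+\cdots+x_r$ for suitable $r$, since these are non-negative combinations of fundamental coweights. If no such $f$ works for some $m$, I would add the congruence condition Theorem \ref{t SLP}(2), comparing $\|\lambda+\rho\|^2$ modulo $p$ as in Lemma \ref{l genus 7 unified v1 Schur}, to exclude the finitely many remaining dominant $\mu\le\lambda_m^+$, using $p>2$.
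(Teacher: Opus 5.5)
Your reduction to $H^i(F,\cL_{\lambda_m})$ and your treatment of (1) and (3) match the paper. For (2) you take a different route, and it is valid. One reflection in $\alpha_4$ (pairing $-2$) takes you to $\omega_2-\omega_3+\omega_5$, whose $\alpha_3$-pairing is $-1$. This kills $H^i(F,\cL_{\lambda_1})$ for \emph{all} $i$, which is slightly more than the paper gets from Proposition \ref{p almost ample} (only $i\neq 1$).

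The gap is in (4). First, your $e$-coordinates are wrong: in fact $\lambda_m=(\frac m2,\frac m2,\frac{m-2}2,\frac{m-2}2,\frac{4-m}2)$. So for $m=-1,\dots,-4$, $\lambda_m+\rho$ is $(\frac72,\frac52,\frac12,-\frac12,\frac52)$, $(3,2,0,-1,3)$, $(\frac52,\frac32,-\frac12,-\frac32,\frac72)$, $(2,1,-1,-2,4)$. Each of these has a repeated absolute value, so your decision rule sends \emph{every} case to the ``wall'' shortcut.

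That shortcut is the singular part of Borel--Weil--Bott, and it is not available in characteristic $p$. What you actually have is Proposition \ref{p positive characteristic reflection formula}, and its part (1) needs $-p\le\langle\lambda,\alpha^\vee\rangle\le -2$ at each step. In fundamental-weight coordinates $\lambda_m=(0,1,0,m-3,1)$, so the only negative pairing is $\langle\lambda_m,\alpha_4^\vee\rangle=m-3\in\{-4,-5,-6,-7\}$. Hence for $p=3$ no reflection can even be started for any of the four $m$, and for $p=5$ none can be started for $m=-3,-4$. The chains go through for all four $m$ only once $p\ge 7$. The proposition is asserted for every $p>2$, and $p=3$ with $m=-1,\dots,-4$ is exactly what Theorem \ref{t genus 7 unified v1 F split} uses (the case $(p,\ell)=(3,7)$ of the introduction). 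So as written, your plan does not prove (4).

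The fix is already in your second branch. The strong-linkage argument never uses regularity of $\lambda_m+\rho$, so run it on all four weights. With the even-sign-change constraint taken into account, $\lambda_m^+$ is $(-\frac12,-\frac12,\frac12,-\frac12,-\frac12)$, $(-1,0,0,0,0)$, $(-\frac12,-\frac12,-\frac12,\frac12,\frac12)$, $(0,-1,0,0,1)$. Take $f=x_1+x_2$ (the fundamental coweight $\omega_2^\vee$). It satisfies $f(\alpha_j)=\delta_{j2}$, $f\ge 0$ on dominant weights, and $f(\lambda_m^+)=-1$ in all four cases. This is exactly the paper's proof, and no congruence argument is needed. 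The choice of $r$ in your candidates $x_1+\cdots+x_r$ does matter: $f=x_1$ and $f=x_1+\cdots+x_5$ both give $f(\lambda_{-4}^+)=0$.
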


\begin{proof}
The assertion (3) follows from Remark \ref{r non-diag vanishing}.

In what follows, we use Notation \ref{nota g=7} and 
we regard all weights as weights of the universal cover 
$\wt{G} = \Spin_{10}$ of $G = \SO_{10}$. 
Let $\omega_1, ..., \omega_5$ be 
the fundamental weights given in Remark \ref{r g=7 fund wt}. 
For $m \in \Z$, we set  
\[
\lambda(m) := \lambda + m\omega_4 = \omega_2+(m-3)\omega_4+\omega_5. 
\]
As 
$\cL_{\omega_4} \simeq \pi^*\MO_{\Sigma}(1)$ and $\cL_{\lambda(m)}$ is $\pi$-nef,  
it follows from Lemma \ref{l g=7 Omega2 lb} that 
$
R\pi_*\cL_{\lambda(m)} \simeq 
\pi_*\cL_{\lambda(m)} \simeq 
(\pi_*\cL_{\lambda})(m)
\simeq \Omega^2_\Sigma(m)$, 
which implies 
\begin{equation}\label{e genus 7 unified v1 flag cohomology}
H^i(\Sigma,\Omega_\Sigma^2(m))
\simeq H^i(F,\mathcal L_{\lambda(m)}).
\end{equation}
It holds that 
\[
\mathcal L_{\lambda(m)}\otimes\mathcal O_F(-K_F)
\simeq
\mathcal L_{2\omega_1+3\omega_2+2\omega_3+(m-1)\omega_4+3\omega_5}.
\]
Hence (1) and (2) follow from 
Proposition \ref{p flag Kodaira} and Proposition \ref{p almost ample}, respectively.

It suffices to prove (4). 
If $L(\mu)$ is a simple composition factor of
$H^i(F,\mathcal L_{\lambda(m)})$, then $\mu$ is dominant and the strong
linkage principle (Theorem \ref{t SLP}) gives
$\mu\leq\lambda(m)^+$, that is, 
\[
\lambda(m)^+-\mu=\sum_{j=1}^5c_j\alpha_j
\]
for some $c_1, ..., c_5 \in \Z_{\geq 0}$. 

In what follows, we use the identification $\R^5 
= \R e_1 \oplus \cdots \oplus \R e_5$ by setting $(x_1, ..., x_5) := x_1e_1 + \cdots + x_5e_5$ for $x_1, ..., x_5 \in \R$. 
We have 
%In standard coordinates $\R^5 = \R e_1 \oplus \cdots \oplus \Re_5$, 
\[
\lambda(m)= \lambda + m\omega_4 = (2e_5 -e_4-e_3) + m\cdot 
\frac{e_1+e_2+e_3+e_4-e_5}{2}
\]
\[
=
(
\frac m2,\frac m2,\frac{m-2}{2},
\frac{m-2}{2},\frac{4-m}{2}
).
\]
The Weyl group of type $D_5$ acts by permutations and an even number
of sign changes. 
A weight $(x_1,\ldots,x_5)$ is dominant if and
only if $x_1\geq x_2\geq x_3\geq x_4\geq|x_5|$. 
By $\rho=\omega_1+\cdots+\omega_5=(4, 3, 2, 1, 0)$,  
we obtain the following list. 
\[
\begin{array}{c|c|c|c}
m & \lambda(m)+\rho & \lambda(m)^++\rho & \lambda(m)^+\\
\hline
-1
& (\frac72,\frac52,\frac12,-\frac12,\frac52)
& (\frac72,\frac52,\frac52,\frac12,-\frac12)
& (-\frac12,-\frac12, *, *, *)\\[2pt]
-2
& (3,2, 0,-1,3)
& (3,3,2,1,0)
& (-1,0,*,*,*)\\[2pt]
-3
& (\frac52,\frac32,-\frac12,-\frac32,\frac72)
& (\frac72,\frac52,\frac32,\frac32,\frac12)
& (-\frac12,-\frac12,*,*,*)\\[2pt]
-4
& (2,1,-1,-2,4)
& (4,2,2,1,1)
& (0,-1,*,*,*)
\end{array}
\]
Define the linear function 
\[
f: \R^5 \to \R, \qquad f(x_1,\ldots,x_5):=x_1+x_2.
\]
For every dominant weight $\mu$, we have $f(\mu)\geq0$.
Moreover,
\[
f(\alpha_2)=1,
\qquad
f(\alpha_j)=0\quad(j\neq2),
\]
while the above table gives $f(\lambda(m)^+)=-1$ in all four cases.
Consequently, $\mu\leq\lambda(m)^+$ would imply
\[
-1 \geq -1-f(\mu)
=f(\lambda(m)^+)- f(\mu)
=f(\lambda(m)^+-\mu)
=\sum_{j=1}^5 c_j f(\alpha_j)
=c_2\geq0,
\]
which is absurd.
Hence $H^i(F,\mathcal L_{\lambda(m)})$ has no simple composition
factor and hence $H^i(F,\mathcal L_{\lambda(m)}) =0$. 
This, together with \eqref{e genus 7 unified v1 flag cohomology},  implies  (4).
\end{proof}

\begin{thm}\label{t genus 7 unified v1 F split}
Assume $p>2$. 
Let $X$ be a prime Fano threefold of genus $7$. 
%realized as a codimension-seven linear section of $\Sigma$.
Then $X$ is $F$-split.
\end{thm}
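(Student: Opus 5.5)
By Proposition \ref{p quasi-F-split quadrics}(3), since $g=7\geq 5$, it suffices to prove $H^1(X,\Omega_X^2(p))=0$. We realise $X=\Sigma\cap H_1\cap\cdots\cap H_7$ as a codimension-$7$ linear section of $\Sigma=\OGr^+(5,10)\subset\P^{15}$. Then the conormal bundle is $\cE:=\cN^\vee_{X/\Sigma}\simeq\MO_X(-1)^{\oplus 7}$, and applying $\wedge^2$ to the conormal sequence $0\to\cE\to\Omega^1_\Sigma|_X\to\Omega^1_X\to 0$ gives exact sequences
\[
0\to\cF\to\Omega^2_\Sigma|_X\to\Omega^2_X\to 0,\qquad 0\to\MO_X(-2)^{\oplus 21}\to\cF\to\Omega^1_X(-1)^{\oplus 7}\to 0 .
\]
Twisting by $p$, it therefore suffices to show three vanishings:
\begin{enumerate}
\item[(a)] $H^2(X,\MO_X(p-2))=0$, which follows from Theorem \ref{t Fano3 ANV} (or Kodaira vanishing) since $p-2>0$;
\item[(b)] $H^2(X,\Omega^1_X(p-1))=0$, which is Proposition \ref{p quasi-F-split quadrics}(1) because $p-1\geq 2$ for $p\geq 3$;
\item[(c)] $H^1(X,\Omega^2_\Sigma(p)|_X)=0$.
\end{enumerate}

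For (c) we use the Koszul resolution of $\MO_X$ by $\MO_\Sigma(-\ell)^{\oplus\binom{7}{\ell}}$ for $0\leq\ell\leq 7$, tensored with the locally free sheaf $\Omega^2_\Sigma(p)$. Splitting this into short exact sequences and chasing, we reduce to
\[
H^{1+\ell}(\Sigma,\Omega^2_\Sigma(p-\ell))=0 \qquad\text{for } 0\leq\ell\leq 7 .
\]
We verify these using Proposition \ref{p g=7 ambient Omega2}, setting $m:=p-\ell$ and $i:=1+\ell\geq 1$, case by case:
\begin{itemize}
\item If $m\geq 2$, case (1) applies since $i\geq 1$.
\item If $m=1$, then $\ell=p-1\geq 2$, so $i\geq 3$ and case (2) applies.
\item If $m=0$, then $\ell=p\geq 3$, so $i\geq 4\neq 2$ and case (3) applies.
\item If $m<0$, then $m=p-\ell\geq 3-7=-4$, so $m\in\{-1,-2,-3,-4\}$ and case (4) applies.
\end{itemize}
Every $\ell$ falls into one of these cases, so all the required groups vanish.

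The main obstacle, namely the negative-twist vanishing via the strong linkage principle, is already handled in Proposition \ref{p g=7 ambient Omega2}. What remains is bookkeeping. One point to check is that the Koszul diagram chase genuinely requires only $H^{1+\ell}$ of the $\ell$-th term, which is the standard consequence of splitting a resolution into short exact sequences. The other is that the range $m\geq -4$ suffices: it does, since $\ell\leq 7$ and $p\geq 3$.
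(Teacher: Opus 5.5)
Your proposal is correct and follows essentially the same route as the paper. You use the same reduction to $H^1(X,\Omega^2_X(p))=0$, the same $\wedge^2$-conormal and Koszul reductions to $H^{1+\ell}(\Sigma,\Omega^2_\Sigma(p-\ell))=0$ for $0\leq\ell\leq 7$, and the same case split via Proposition \ref{p g=7 ambient Omega2}. One difference is that you state explicitly the bound $p-\ell\geq -4$, which the paper leaves implicit.
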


\begin{proof}
Recall that $X$ is a linear section of $\Sigma := \OGr^+(5, 10) \subset \P^{15}$ \cite[Theorem 1.2]{KTprime}. 
In particular, $\cN^{\vee}_{X/\Sigma} \simeq \MO_X(-1)^{\oplus 7}$. 
Applying $\wedge^2$ to the conormal exact sequence $0 \to \MO_X(-1)^{\oplus 7} \to \Omega_\Sigma^1|_X \to \Omega^1_X \to 0$, 
we get the following exact sequences for some vector bundle $\cF$: 
\[
0 \to \cF \to \Omega_\Sigma^2|_X \to \Omega^2_X \to 0, \qquad 
0 \to \MO_X(-2)^{\oplus 21} 
\to \cF \to \Omega^1_X(-1)^{\oplus 7} \to 0, 
\]
where we used $\wedge^2 (\MO_X(-1)^{\oplus 7})  \simeq \MO_X(-2)^{\oplus 21}$. 
It is enough to show $H^1(X, \Omega_X^2(p))=0$ (Proposition \ref{p quasi-F-split quadrics}). 
As  $H^2(X, \MO_X(p-2))= H^2(X, \Omega_X^1(p-1))=0$  (Proposition \ref{p quasi-F-split quadrics}), 
we get $H^2(X, \cF(p))=0$. 
Therefore, the problem is reduced to $H^1(X, \Omega_\Sigma^2(p)|_X)=0$. 
We have the Koszul exact sequence
\[
0
\to
\MO_\Sigma(-7)
\to
\MO_\Sigma(-6)^{\oplus 7}
\to
\MO_\Sigma(-5)^{\oplus 21}
\to
\MO_\Sigma(-4)^{\oplus 35}
\]
\[
\to
\MO_\Sigma(-3)^{\oplus 35}
\to
\MO_\Sigma(-2)^{\oplus 21}
\to
\MO_\Sigma(-1)^{\oplus 7}
\to
\MO_\Sigma
\to
\MO_X
\to0. 
\]
Applying $(-) \otimes \Omega^2_\Sigma(p)$ and the decomposition into the short exact sequences, 
it suffices to prove that 
\[
H^{\ell+1}(\Sigma, \Omega^2_\Sigma(p-\ell))=0
\text{ \qquad if \qquad}
0 \leq \ell \leq 7.
\]
If $p-\ell <0$ or $p-\ell \geq 2$, then the assertion follows from Proposition \ref{p g=7 ambient Omega2}.
If $p-\ell=1$, then $\ell+1=p\geq 2$, and hence the assertion follows from Proposition \ref{p g=7 ambient Omega2}.
Thus we may assume $p-\ell=0$.
Again by Proposition \ref{p g=7 ambient Omega2},
it suffices to show $\ell+1\neq 2$, which follows from
$\ell+1=p+1\geq 4$.
\end{proof}

\section{Genus 9}

\begin{nota}\label{nota g=9}
Assume that $p> 2$.
Fix the following $6 \times 6$ matrix: 
\[
J:=
\begin{pmatrix}
O&E_3\\
-E_3&O
\end{pmatrix} \in \Mat_6(k).
\]
\begin{enumerate}
\item 
Set $G:=\Sp_6
:=
\left\{
g\in\SL_6
\mathrel{}\middle|\mathrel{}
g^tJg=J
\right\}$, which is a semisimple algebraic group. 
Set 
\[
T
:=
\left\{
\begin{pmatrix}
D&O\\
O&D^{-1}
\end{pmatrix}
\mathrel{}\middle|\mathrel{}
D=\diag(t_1,\ldots,t_3),\
t_i\in k^\times
\right\}, 
\]
\[
B
:=
\left\{
\begin{pmatrix}
A&AX\\
O&(A^{-1})^t
\end{pmatrix}
\mathrel{}\middle|\mathrel{}
A\in B_{\GL_3},\
X\in\Sym_3(k)
\right\}, 
\]
where $\Sym_3(k)
:=
\left\{
X\in\Mat_3(k)
\mathrel{}\middle|\mathrel{}
X^t=X
\right\}$ and 
$B_{\GL_3}$ denotes  the lower triangular Borel subgroup of $\GL_3$.
Then $T$ is a maximal torus and $B$ is a Borel subgroup of $\Sp_6$ satisfying $\Sp_6 \supset B \supset T$. 
\item 
Set 
 $W:=k^3\oplus 0\subset k^3\oplus k^3 =k^6$, 
 which is a Lagrangian subspace of $k^6$ 
 with respect to the non-degenerate alternating bilinear form
$(v,w) \mapsto v^tJw$ induced by $J$. 
The stabiliser of $W$ is the maximal parabolic subgroup
\[
P:=\Stab_{\Sp_6}(W)
=
\left\{
\begin{pmatrix}
A&AX\\
O&(A^{-1})^t
\end{pmatrix}
\mathrel{}\middle|\mathrel{}
A\in\GL_3,\ 
X\in\Sym_3(k)
\right\}.
\]
We have $P=L\ltimes R_u(P)$ for the  unipotent radical
\[
R_u(P)
=
\left\{
\begin{pmatrix}
I_3&X\\
O&I_3
\end{pmatrix}
\mathrel{}\middle|\mathrel{}
X\in\Sym_3(k)
\right\},
\]
and the Levi subgroup 
\[
L
:=
\left\{
\begin{pmatrix}
A&O\\
O&(A^{-1})^t
\end{pmatrix}
\mathrel{}\middle|\mathrel{}
A\in\GL_3
\right\}
\simeq\GL_3
\]
of $P$ containing $T$. Moreover, we have 
\[
B_L := B\cap L
=
\left\{
\begin{pmatrix}
A&O\\
O&(A^{-1})^t
\end{pmatrix}
\mathrel{}\middle|\mathrel{}
A\in B_{\GL_3}
\right\}
\simeq B_{\GL_3}.
\]
For $F:= G/B$ and $\Sigma := G/P$, let 
$\pi:F=G/B\to \Sigma = G/P$ be the natural projection. 
\item 
For $1\leq i\leq3$, let $\epsilon_i\in X(T)$ be the character
defined by
\[
\epsilon_i\bigl(\diag
(t_1,\ldots,t_3,t_1^{-1},\ldots,t_3^{-1})\bigr)
:=t_i.
\]
Set $\nu :=(3, 1, 0) := 3\epsilon_1 + \epsilon_2 \in X(T)$ and let $\lambda \in X(B)$ be the unique extension of $\nu$. 
\end{enumerate}
\end{nota}

\begin{rem}\label{r g=9 fund wt}
We use Notation \ref{nota g=9}. 
Set
\[
e_1:=-\epsilon_3,\qquad
e_2:=-\epsilon_2,\qquad
e_3:=-\epsilon_1.
\]
Then
\[
\alpha_1=e_1-e_2,\qquad
\alpha_2=e_2-e_3,\qquad
\alpha_3=2e_3
\]
form a set of simple roots for which $B$ is the negative Borel
subgroup and $P$ is the reduced maximal parabolic subgroup
corresponding to $\alpha_3$.
The corresponding fundamental weights are
\[
\omega_1=e_1,\qquad
\omega_2=e_1+e_2,\qquad
\omega_3=e_1+e_2+e_3.
\]
We have 
\[
\nu =(3, 1, 0) = 3\epsilon_1 + \epsilon_2 = -3e_3-e_2 = \omega_1 +2\omega_2 -3\omega_3. 
\]
\end{rem}

\begin{rem}
We use Notation \ref{nota g=9}. 
Then it is well known that 
$\Sigma$ is isomorphic to the Lagrangian Grassmannian variety
$\LG(3,6)$
parametrising the $3$-dimensional Lagrangian subspaces of $k^6$. 
For the universal subbundle $\cU$ on $\Sigma$, 
it is well known that 
\[
\dim \Sigma=6,\qquad
\det\mathcal U\simeq\mathcal O_\Sigma(-1),\qquad
%\Omega_\Sigma^1 \overset{(\star)}{\simeq}\Sym^2\mathcal U,\qquad
\omega_\Sigma^{-1} \simeq \MO_{\Sigma}(4). 
\]
Moreover, $\pi^*\MO_{\Sigma}(1) \simeq \cL_{\omega_3}$. 
%where $(\star)$ follows from Lemma \ref{l g=9 Omega1}. 
For the closed embedding $\Sigma \subset \P^{13}$ induced by $|\MO_{\Sigma}(1)|$, 
an arbitrary prime Fano threefold $X$ of genus $9$ is 
isomorphic to $\Sigma \cap \P^{10}$ for some $10$-dimensional linear subvariety $\P^{10} \subset \P^{13}$ \cite[Theorem 1.2]{KTprime}. 
\end{rem}

\begin{lem}\label{l genus 9 Schur 31}
Assume that $p>2$.
Let $B_{\GL_3}\subset\mathrm{GL}_3$ be the lower triangular Borel subgroup and let $U$ be the standard $3$-dimensional $\GL_3$-module.
Then the following hold: 
\begin{enumerate}
\item 
$H^0(\mathrm{GL}_3/B_{\GL_3},\cL_{(3,1,0)})$ 
is a simple $\mathrm{GL}_3$-module. 
\item There is a $\GL_3$-module isomorphism 
\[
\wedge^2(\Sym^2U)
\simeq
H^0(\mathrm{GL}_3/B_{\GL_3},\cL_{(3,1,0)}). 
\]
\end{enumerate}
\end{lem}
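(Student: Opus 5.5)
I will follow the proof of Lemma \ref{l genus 7 unified v1 Schur} almost verbatim, replacing $\GL_5$ by $\GL_3$ and $\nu=(2,1,1,0,0)$ by $\nu=(3,1,0)$.

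For (1), set $\nu:=(3,1,0)$. Suppose $L(\mu)$ is a composition factor of $H^0(\GL_3/B_{\GL_3},\cL_\nu)$ with $\mu\neq\nu$; since $\nu$ has multiplicity one, it suffices to derive a contradiction. The module is a homogeneous polynomial representation of degree $4$, so $\mu_1\geq\mu_2\geq\mu_3\geq0$ and $\sum\mu_i=4$. By the strong linkage principle (Theorem \ref{t SLP}), $\mu\leq\nu$. The partitions of $4$ with at most three parts that are dominated by $(3,1,0)$ and differ from it are $(2,2,0)$ and $(2,1,1)$.

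Next use $\rho=(1,0,-1)$, which gives $\nu+\rho=(4,1,-1)$. Condition (2) of Theorem \ref{t SLP} says $\nu+\rho=w(\mu+\rho)+p\gamma$ with $\gamma\in\Z\Phi$. Hence each coordinate of $\nu+\rho$ is congruent mod $p$ to a coordinate of $\mu+\rho$ under some permutation, and the sums of squares agree mod $p$ (as in the genus $7$ argument).
\begin{itemize}
\item For $\mu=(2,2,0)$: $\mu+\rho=(3,2,-1)$, with squared norm $14$ against $18$ for $\nu+\rho$. The difference $4$ is not divisible by an odd prime.
\item For $\mu=(2,1,1)$: $\mu+\rho=(3,1,0)$, with squared norm $10$ against $18$. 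The difference is $8$, again excluded since $p>2$.
\end{itemize}
The one point to double-check is that the sum-of-squares invariant really is preserved modulo $p$. It is: $|w(x)+p\gamma|^2\equiv|x|^2 \pmod p$ because $\gamma\in\Z^3$ and the form is integral. This settles (1).

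For (2), the dimension of $\wedge^2(\Sym^2U)$ is $\binom{6}{2}=15$. The Weyl dimension formula for $\nu=(3,1,0)$ gives $\frac{(2+1)(3+2)(1+1)}{1\cdot2\cdot1}=15$. A weight-basis inspection shows that $u_1^2\wedge u_1u_2$ has weight $(3,1,0)$, that this weight has multiplicity one, and that it is the largest weight occurring in $\wedge^2(\Sym^2U)$. It is also the highest weight of every composition factor that could occur.

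The main obstacle is turning "highest weight $\nu$, same dimension, $\nabla(\nu)$ simple" into an isomorphism. I will argue as in Lemma \ref{l genus 7 unified v1 Schur}. All weights of $\wedge^2(\Sym^2U)$ are $\leq\nu$, and every composition factor is $L(\mu)$ with $\mu$ a degree-$4$ partition. By (1), $L(\nu)=\nabla(\nu)$ already has dimension $15$, and $\nu$ occurs in $\wedge^2(\Sym^2U)$. So $\wedge^2(\Sym^2U)$ has $L(\nu)$ as a composition factor of the full dimension, hence is simple and isomorphic to $L(\nu)\simeq H^0(\GL_3/B_{\GL_3},\cL_\nu)$.
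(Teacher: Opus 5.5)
Your proposal is correct and follows the paper's proof essentially step for step: the same strong-linkage argument rules out $\mu\in\{(2,2,0),(2,1,1)\}$ via the sum-of-squares differences $4$ and $8$, and part (2) uses the same dimension count $15=15$ together with $\nu$ being the largest weight of $\wedge^2(\Sym^2U)$. Your last paragraph spells out a step the paper leaves implicit, namely that a largest weight $\nu$ forces $L(\nu)$ to be a composition factor, which then exhausts the dimension. Your sentence calling $\nu$ ``the highest weight of every composition factor'' is loosely worded, but the argument never relies on it.
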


\begin{proof}
Set $\nu:=(3,1,0)$.

Let us show (1). 
Suppose that there exists a simple composition factor $L(\mu)$ of 
$H^0(\mathrm{GL}_3/B_{\GL_3},\cL_\nu)$ of highest weight 
$\mu=(\mu_1,\mu_2,\mu_3)\in\Z^3$ with $\mu\neq\nu$. 
Since the weight $\nu$ occurs with multiplicity one, it is enough to derive a contradiction. 
Since the $\mathrm{GL}_3$-module
$H^0(\mathrm{GL}_3/B_{\GL_3},\cL_\nu)$ is a homogeneous polynomial representation of degree $4$, 
we get 
\begin{enumerate}
\item[(i)] $\sum_{i=1}^3\mu_i=4$ and $\mu_1\geq\mu_2\geq\mu_3\geq0$. 
\end{enumerate}
As $\nu$ is dominant, so is $\nu+\rho$, and hence $\nu^+=\nu$. 
We have $\rho=(1,0,-1)$ by direct computation. 
By the strong linkage principle (Theorem \ref{t SLP}), 
the following hold: 
\begin{enumerate}
\item[(ii)] $\mu\leq\nu$, and 
\item[(iii)] 
$\nu+\rho=w(\mu+\rho)+p\alpha$ 
for some $w\in S_3$ and $\alpha\in\Z\Phi\subset\Z^3$. 
\end{enumerate}
As $\mu\neq\nu$, (i) and (ii) imply 
$\mu\in\left\{(2,2,0),(2,1,1)\right\}.$ 
We have
$\nu+\rho=(4,1,-1)$. 

If $\mu=(2,2,0)$, then $\mu+\rho=(3,2,-1),$ 
and hence (iii) leads to the following contradiction:
\[
p\Z\ni
(4^2+1^2+(-1)^2)
-
(3^2+2^2+(-1)^2)
=
4.
\]
If $\mu=(2,1,1)$, then
$\mu+\rho=(3,1,0),$ 
and hence (iii) leads to the following contradiction:
\[
p\Z\ni
(4^2+1^2+(-1)^2)
-
(3^2+1^2+0^2)
=
8.
\]
Thus (1) holds.

Let us show (2). We have 
\[
\dim\wedge^2(\Sym^2U)
=
\binom{\dim\Sym^2U}{2}
=
\binom{6}{2}
=
15.
\]
As $\nu$ is dominant, 
the Weyl dimension formula \cite[Corollary II.5.11]{Jan03} implies 
\[
\dim H^0(\mathrm{GL}_3/B_{\GL_3},\cL_\nu)
=
\prod_{1\leq i<j\leq3}
\frac{\nu_i-\nu_j+j-i}{j-i}
=
15.
\]
By (1), it is enough to show that $\nu$ is the largest weight of 
$\wedge^2(\Sym^2U)$. 
Let $U=ku_1 \oplus ku_2 \oplus ku_3$ be the weight decomposition, 
where $u_1, u_2,u_3$ are of weight
$(1,0,0)$, $(0, 1, 0)$, $(0,0,1)$. 
By using the weight decomposition
\[
\wedge^2(\Sym^2U)
=
\bigoplus_{(a,b,c,d)\in I}
k(u_au_b)\wedge(u_cu_d)
\]
for a suitable subset $I\subset\Z^4$, 
we see that
$u_1^2\wedge(u_1u_2)\in\wedge^2(\Sym^2U)$ 
is of weight $\nu=(3,1,0)$ and $\nu$ is the largest weight of $\wedge^2(\Sym^2U)$. 
Thus (2) holds. 
\end{proof}

\begin{lem}\label{l g=9 Omega1}
We use Notation \ref{nota g=9}. 
%Assume $p\neq 2$. 
%Set $\Sigma := \LG(3,6)$ and 
Let $\cU$ be the universal subbundle on $\Sigma$. 
Then $\Omega_\Sigma^1\simeq\Sym^2\cU$. 
\end{lem}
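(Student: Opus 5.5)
We identify both sides through their fibres at the base point $o := eP \in \Sigma = G/P$, as $P$-modules. Since $\Sigma = G/P$ is homogeneous, the tangent space is $T_{\Sigma,o} \simeq \mathfrak g/\mathfrak p$. Under the trace form this is identified with the dual of the nilradical $\Lie R_u(P)$, so $\Omega^1_{\Sigma,o} \simeq \Lie R_u(P)$ up to the usual conventions. Here
\[
R_u(P)=\left\{\begin{pmatrix} I_3 & X\\ O & I_3\end{pmatrix} \mathrel{}\middle|\mathrel{} X\in \Sym_3(k)\right\}.
\]
We instead compute $\mathfrak g/\mathfrak p$ directly. Its elements are represented by matrices $\begin{pmatrix} O & O\\ Y & O\end{pmatrix}$ lying in $\mathfrak{sp}_6$, and the condition $g^tJ+Jg=0$ forces $Y$ to be symmetric. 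The adjoint action of $P$ on $\mathfrak g/\mathfrak p$ factors through $L \simeq \GL_3$, with $R_u(P)$ acting trivially on this graded piece. On it, $A\in \GL_3$ acts by $Y \mapsto (A^{-1})^t Y A^{-1}$. Hence $T_{\Sigma,o} \simeq \Sym^2(U^\vee)$ as a $P$-module, where $U$ is the standard $\GL_3$-module.

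Next we identify the fibre of the universal subbundle. The fibre $\cU_o$ is $W = k^3\oplus 0$, on which $P$ acts through $A$. So $\cU_o \simeq U$ as $P$-modules, with $R_u(P)$ acting trivially since it fixes $W$ pointwise. Dualizing the tangent space gives $\Omega^1_{\Sigma,o} \simeq (\Sym^2 U^\vee)^\vee$. The remaining point, and the one place where $p>2$ is used, is comparing $(\Sym^2 U^\vee)^\vee$ (divided powers $\Gamma^2 U$) with $\Sym^2 U$. The natural symmetrization map $\Sym^2 U \to \Gamma^2 U$, $uv\mapsto u\otimes v+v\otimes u$, is $\GL_3$-equivariant. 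It is an isomorphism exactly when $2$ is invertible, which holds since $p>2$. Therefore $\Omega^1_{\Sigma,o}\simeq \Sym^2\cU_o$ as $P$-modules, with $R_u(P)$ acting trivially on both sides.

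Finally, homogeneous vector bundles on $G/P$ correspond to $P$-modules, via $V\mapsto G\times^P V$, so the $P$-isomorphism of fibres globalizes to $\Omega^1_\Sigma\simeq \Sym^2\cU$. The fact that $R_u(P)$ acts trivially on the cotangent fibre is needed for the fibre identification to be a $P$-isomorphism rather than only an $L$-isomorphism. It can be checked directly: the unipotent radical acts trivially on $\mathfrak g/\mathfrak p$ here because $\mathfrak g$ has a $3$-step grading $\mathfrak g_{-1}\oplus\mathfrak g_0\oplus\mathfrak g_1$ with $\mathfrak p = \mathfrak g_0\oplus\mathfrak g_1$.

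Alternatively, we can avoid this by recognising $\Sym^2 U$ as a simple $\GL_3$-module for $p>2$ (highest weight $(2,0,0)$, via the linkage argument of Lemma \ref{l genus 9 Schur 31}) and invoking \cite[Lemma 19.16]{Mil17} exactly as in the proof of Lemma \ref{l g=7 Omega2 lb}.

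The main obstacle is keeping track of duals and sign conventions, that is, $U$ versus $U^\vee$. One could also argue globally: the tangent bundle of $\LG(3,6)$ is the subbundle of $\mathcal{H}om(\cU,\cQ)$ consisting of symmetric maps, and $\cQ\simeq\cU^\vee$ via the symplectic form. This gives $T_\Sigma\simeq\Sym^2\cU^\vee$, and dualizing with $p\neq 2$ yields the claim.
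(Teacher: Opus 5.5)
Your proposal is correct, but it takes a different route from the paper.

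\textbf{The paper's route.} The paper argues extrinsically. It realises $\Sigma$ as the zero locus of a regular section of $\wedge^2\cU_{\Gr(3,6)}^\vee$ and takes the conormal sequence $0\to\wedge^2\cU\to\cU\otimes\cQ^\vee\to\Omega^1_\Sigma\to 0$. It then uses the symplectic form to identify $\cQ\simeq\cU^\vee$, and splits $\cU\otimes\cU\simeq\Sym^2\cU\oplus\wedge^2\cU$ using $p>2$. It concludes from the equality of $P$-semisimplifications, together with the simplicity of $\Sym^2U$. That simplicity comes from the simple socle of $H^0(\GL_3/B_{\GL_3},\cL_{(2,0,0)})$ plus complete reducibility for $p>2$.

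\textbf{Your route.} You compute the isotropy representation on $\g/\p$ directly from the matrix description of $P$ in Notation \ref{nota g=9}. This needs no simplicity statement, no Jordan--H\"older comparison, and no regularity of a defining section.

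\textbf{On the triviality of the $R_u(P)$-action.} The grading $\g_{-1}\oplus\g_0\oplus\g_1$ only controls the derivative of the action. The group-level check you should actually cite is one line: conjugating $\begin{pmatrix} O&O\\ Y&O\end{pmatrix}$ by $\begin{pmatrix} I_3&X\\ O&I_3\end{pmatrix}$ gives $\begin{pmatrix} XY&-XYX\\ Y&-YX\end{pmatrix}$. This agrees with the original modulo $\p$, since $XYX$ is symmetric and $YX=(XY)^t$.

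\textbf{A refinement about $p>2$.} The space of symmetric matrices with the action $Y\mapsto (A^{-1})^tYA^{-1}$ is, canonically, the space of symmetric bilinear forms $(u,v)\mapsto u^tYv$ on $U$. That space is $(\Sym^2U)^\vee$, not $\Sym^2(U^\vee)$. Dualising then gives $\Omega^1_{\Sigma,o}\simeq\Sym^2U$ directly, and this computation does not use $p>2$ at all.

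Your chain uses two identifications, $\Sym^2(U^\vee)\simeq\Gamma^2(U^\vee)$ and $\Gamma^2U\simeq\Sym^2U$, each of which needs $p>2$; they cancel. So your argument is valid under the standing assumption $p>2$, but the step you flag as ``the one place where $p>2$ is used'' is an artefact of the non-canonical identification. Done canonically, your approach yields a characteristic-free isomorphism $\Omega^1_\Sigma\simeq\Sym^2\cU$. The paper's proof, by contrast, genuinely needs $p>2$ twice: for the splitting of $\cU\otimes\cU$, and for the simplicity of $\Sym^2U$.
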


%Cheong–Choe–Hitching, Irreducibility of Lagrangian Quot schemes over an algebraic curve, Math. Z. 300 (2022), 1265–1289, proof of Proposition 2.4(b)
%\cite[p.~1268, proof of Proposition 2.4(b)]{CCH22}
\begin{proof}
Let $\cU_{\Gr(3, 6)}$ (resp. $\cQ_{\Gr(3, 6)}$) 
be the universal subbundle (resp.\ quotient bundle) on $\Gr(3, 6)$. 
Set $\cU := \cU_{\Gr(3, 6)}|_{\Sigma}$ and $\cQ := \cQ_{\Gr(3, 6)}|_\Sigma$. 
By definition, there exists an element 
$s\in
H^0(
\Gr(3, 6),
\wedge^2\cU_{\Gr(3, 6)}^\vee
)$ whose zero locus scheme is $\Sigma$.
It holds that 
\[
\codim_{\Gr(3, 6)}\,\Sigma
= 9-6 = 3
=
\rank(
\wedge^2\cU_{\Gr(3, 6)}^\vee
). 
\]
Thus the section $s$ is regular, and hence we have the conormal exact sequence
\[
0
\to 
\wedge^2\cU
\to 
\Omega_{\Gr(3, 6)}^1|_\Sigma
\to 
\Omega_\Sigma^1
\to 
0.
\]

As
$\Omega_{\Gr(3, 6)}^1
\simeq
\cU_{\Gr(3, 6)}\otimes\cQ_{\Gr(3, 6)}^\vee$, we have $\Omega_{\Gr(3, 6)}^1|_\Sigma
\simeq
\cU\otimes\cQ^\vee$. 
For $o := [W]  = eP \in  \Sp_6/P = \Sigma$, we have the following $P$-module isomorphism: 
\[
\cQ_o = k^6/W \overset{(\star)}{\simeq} W^\vee = \cU^\vee_o, 
\]
where $(\star)$ is induced by 
the symplectic form $\omega : k^6 \times k^6 \to k$.  
This $P$-module isomorphism corresponds to 
a $\Sp_6$-equivariant isomorphism $\cQ \simeq \cU^\vee$. 
Therefore,
% \[
% \cQ_{\Gr(3, 6)}^\vee|_\Sigma\simeq\cU
% \]
% and hence
$\Omega_{\Gr(3, 6)}^1|_\Sigma
\simeq
\cU\otimes\cU.$ 
For $U := \cU_o$, we now finish the proof by assuming that 
\begin{enumerate}
\item[($\star\star$)] $\Sym^2 U$ is a simple $P$-module. 
\end{enumerate}
%Here $U := \cU_o$. 
We have  the $\Sp_6$-equivariant exact sequence $0 \to \wedge^2 \cU \to \cU \otimes \cU \to \Omega^1_\Sigma \to 0$ 
and the decomposition $\cU\otimes\cU
\simeq
\Sym^2\cU
\oplus
\wedge^2\cU$ ensured by $p>2$. 
Comparing these, we see that 
the semisimplifications of $\Sym^2 U (=\Sym^2 \cU_o)$ and $\Omega^1_{\Sigma, o}$ as $P$-modules coincide. 
This, together with  ($\star\star$), induces a $P$-module isomorphism $\Sym^2 U \simeq \Omega^1_{\Sigma, o}$, which corresponds to $\Sym^2 \cU \simeq \Omega^1_{\Sigma}$.

It suffices to prove ($\star\star$). 
%By ..., it is enough to show that $\Sym U$ is a simple $\GL_3$-module. 
As $\cL_{(2, 0, 0)} \simeq q^* \MO_{\P^2}(2)$ for some contraction $q: \GL_3/B_{\GL_3} \to \P^2$, 
it is easy to see that 
$\Sym^2U
\simeq
H^0(\GL_3/B_{\GL_3},\cL_{(2,0,0)})$. 
Recall that $H^0(\GL_3/B_{\GL_3},\cL_{(2,0,0)})$ 
has simple socle \cite[Proposition II.2.3]{Jan03} and 
$\Sym^2U$ is completely
reducible for $p>2$ \cite[Corollary (2.6e)]{Gre07}. 
Thus $\Sym^2U$ is a simple $\GL_3$-module. 
Since $L \simeq \GL_3$ is a subgroup of $P$, ($\star\star$) holds. 
\qedhere

\end{proof}

\begin{lem}\label{l genus 9 Omega2 flag v4}
We use Notation \ref{nota g=9}. 
Then
\[
R\pi_*\cL_{\omega_1+2\omega_2-3\omega_3}
\simeq
\pi_*\cL_{\omega_1+2\omega_2-3\omega_3}
\simeq
\Omega_\Sigma^2.
\]
\end{lem}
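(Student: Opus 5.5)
The plan is to follow the proof of Lemma \ref{l g=7 Omega2 lb}, with Lemma \ref{l genus 9 Schur 31} and Lemma \ref{l g=9 Omega1} as the inputs. The argument has three steps.

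\textbf{Step 1: the derived pushforward is an honest pushforward.} By Remark \ref{r g=9 fund wt}, $\lambda=\omega_1+2\omega_2-3\omega_3$ is the unique extension of $\nu=(3,1,0)$ to $B$. Its coefficients on $\omega_1$ and $\omega_2$ are nonnegative. These are exactly the fundamental weights of the Levi part $L$, so $\cL_\lambda$ is $\pi$-nef. Proposition \ref{p flag Kodaira}, applied to $\pi\colon F\to\Sigma$, then gives $R^i\pi_*\cL_\lambda=0$ for $i>0$. Hence $R\pi_*\cL_\lambda\simeq\pi_*\cL_\lambda$.

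\textbf{Step 2: the fibre of $\pi_*\cL_\lambda$ at $o$.} Let $o:=eP$. The standard base-change identification gives $L$-module isomorphisms
\[
(\pi_*\cL_\lambda)_o\simeq H^0(P/B,\cL_\lambda)\simeq H^0(L/B_L,\cL_\nu)=H^0(\GL_3/B_{\GL_3},\cL_{(3,1,0)}).
\]
By Lemma \ref{l genus 9 Schur 31} this module is simple and isomorphic to $\wedge^2(\Sym^2 U)$, where $U=\cU_o$ is the standard representation of $L\simeq\GL_3$.

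\textbf{Step 3: the fibre of $\Omega^2_\Sigma$ at $o$.} Lemma \ref{l g=9 Omega1} gives a $G$-equivariant isomorphism $\Omega^1_\Sigma\simeq\Sym^2\cU$. Taking $\wedge^2$ gives $\Omega^2_\Sigma\simeq\wedge^2\Sym^2\cU$ equivariantly, so there is an $L$-module isomorphism
\[
\theta\colon\Omega^2_{\Sigma,o}\xrightarrow{\simeq}(\pi_*\cL_\lambda)_o .
\]

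\textbf{Step 4: upgrading $\theta$ to a $P$-isomorphism.} This is the only step needing care, and the point to check is that the $L$-action on $\cU_o=W$ is indeed the standard one, so that Lemma \ref{l genus 9 Schur 31} applies with the right normalisation. Once that is checked, both sides of $\theta$ are simple $L$-modules, hence simple $P$-modules. By \cite[Lemma 19.16]{Mil17}, $R_u(P)$ acts trivially on both. Since $P=L\ltimes R_u(P)$, $\theta$ is a $P$-module isomorphism. Under the equivalence between $G$-equivariant vector bundles on $G/P$ and $P$-modules, it yields $\Omega^2_\Sigma\simeq\pi_*\cL_\lambda$. Combined with Step 1, this proves the lemma.
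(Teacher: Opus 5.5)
Your proposal is correct and follows the paper's proof essentially step for step. It gets $\pi$-nefness of $\cL_\lambda$ from dominance of $\nu=(3,1,0)$ on the fibre $P/B\simeq L/B_L$, and identifies both fibres at $o$ with the simple $L$-module $H^0(L/B_L,\cL_\nu)\simeq\wedge^2(\Sym^2\cU_o)$ via Lemmas \ref{l genus 9 Schur 31} and \ref{l g=9 Omega1}. It then upgrades $\theta$ to a $P$-isomorphism because $R_u(P)$ acts trivially on simple $P$-modules. The normalisation you flag in Step 4 is immediate from Notation \ref{nota g=9}: $L=\{\diag(A,(A^{-1})^t)\}$ acts on $W=k^3\oplus 0$ through $A$, and $\nu=3\epsilon_1+\epsilon_2$ corresponds to $(3,1,0)$ under $L\simeq\GL_3$.
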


\begin{proof}
%We use Notation \ref{nota g=9}. 
By Remark \ref{r g=9 fund wt}, 
we have $\lambda=\omega_1+2\omega_2-3\omega_3$. 
%By Remark \ref{r g=9 fund wt}, the restriction of $\lambda$ to$B_L$ is  $\nu=(3,1,0)$. 
Set $o:=eP\in G/P = \Sigma$. 
Note that $\cL_\lambda$ is $\pi$-nef, because 
$\nu =(3, 1, 0)$ is dominant and 
$\cL_\lambda|_{\pi^{-1}(o)} = \cL_\lambda|_{P/B} =\cL_\lambda|_{L/B_L} =\cL_\nu$. 
Therefore, $R\pi_*\cL_\lambda
\simeq
\pi_*\cL_\lambda$. 
We have the following $L$-module
isomorphisms:
\[
\pi_*\cL_\lambda|_o
\simeq
H^0(P/B,\cL_\lambda|_{P/B})
\simeq
H^0(L/B_L,\cL_\nu).
\]
On the other hand, Lemma \ref{l g=9 Omega1} gives
$\Omega_{\Sigma,o}^2
\simeq
\wedge^2(\Sym^2\cU_o).$ 
By Lemma \ref{l genus 9 Schur 31}, we obtain an $L$-module
isomorphism $\wedge^2(\Sym^2\cU_o)
\simeq
H^0(L/B_L,\cL_\nu)$, and this $L$-module is simple. 
Hence we get an $L$-module isomorphism
\[
\theta:
\pi_*\cL_\lambda|_o
\xrightarrow{\simeq}
\Omega_{\Sigma,o}^2
\]
between the $P$-modules.
Since both sides are simple $L$-modules, they are simple
$P$-modules. 
Therefore, the actions of $R_u(P)$ on both sides
are trivial \cite[Lemma 19.16]{Mil17}. 
By $P=L\ltimes R_u(P)$, $\theta$ is a $P$-module isomorphism, 
which corresponds to $\pi_*\cL_\lambda\xrightarrow{\simeq} \Omega_\Sigma^2$. 
\end{proof}

\begin{prop}\label{p genus 9 Omega2 vanishing v4}
Assume that $p>2$. Set $\Sigma := \LG(3, 6)$. Then 
$H^i(\Sigma, \Omega_\Sigma^2(m))=0$ if one of the following holds: 
\begin{enumerate}
\item $m\geq 2$ and $i\geq 1$. 
\item $m=1$ and $i \neq 1$. 
\item $m=0$ and $i \neq 2$. 
\end{enumerate}
\end{prop}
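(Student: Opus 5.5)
The plan is to follow the genus $7$ argument (Proposition \ref{p g=7 ambient Omega2}): transfer the cohomology of $\Omega^2_\Sigma(m)$ to a line bundle on the full flag variety $F=\Sp_6/B$, then apply the Kodaira-type vanishing results for flag varieties. Assertion (3) needs nothing new, since it is exactly Remark \ref{r non-diag vanishing}(1) applied to the flag variety $\Sigma$.

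For (1) and (2), use Notation \ref{nota g=9} and set
\[
\lambda(m):=\lambda+m\omega_3=\omega_1+2\omega_2+(m-3)\omega_3 .
\]
We have $\pi^*\MO_\Sigma(1)\simeq\cL_{\omega_3}$, and $\cL_{\lambda(m)}$ restricts to $\cL_\nu$ on the fibres $P/B\simeq L/B_L$, so it is $\pi$-nef. Lemma \ref{l genus 9 Omega2 flag v4} and the projection formula then give $R\pi_*\cL_{\lambda(m)}\simeq\Omega^2_\Sigma(m)$. Hence
\[
H^i(\Sigma,\Omega^2_\Sigma(m))\simeq H^i(F,\cL_{\lambda(m)})\quad\text{for all } i,\,m .
\]
Next use $-K_F=2\rho=2\omega_1+2\omega_2+2\omega_3$, which gives
\[
\cL_{\lambda(m)}\otimes\MO_F(-K_F)\simeq\cL_{3\omega_1+4\omega_2+(m-1)\omega_3}.
\]

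For $m\geq 2$ all three coefficients are positive, so this bundle is ample. Proposition \ref{p flag Kodaira}, applied to $F\to\Spec k$, then yields $H^{i}(F,\cL_{\lambda(m)})=0$ for $i>0$, which is (1).

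For $m=1$ the bundle is $\cL_{3\omega_1+4\omega_2}$, which has coefficient zero exactly at $\omega_3$. Let $Q\supset B$ be the minimal parabolic subgroup attached to $\alpha_3$. Then $F\to\Sp_6/Q$ is a $\P^1$-bundle, and $\cL_{3\omega_1+4\omega_2}$ is the pullback of an ample line bundle $M$ on $\Sp_6/Q$. Proposition \ref{p almost ample general} with $d=\dim Q/B=1$ (or equivalently Proposition \ref{p almost ample}) gives $H^i(F,K_F+\pi_Q^*M)=0$ for all $i\neq 1$, which is (2).

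I expect no real obstacle beyond bookkeeping. The points to check carefully are the identification $-K_F=2\rho$ in these coordinates, and that $\omega_3$ is the only weight whose coefficient vanishes when $m=1$, so that the contraction is genuinely a $\P^1$-bundle. Unlike genus $7$, the strong linkage principle is not needed here, because no negative twists are asserted.
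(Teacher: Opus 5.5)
Your proposal is correct and follows the paper's proof essentially step for step. Both handle (3) by Remark~\ref{r non-diag vanishing}, transfer to $F$ via Lemma~\ref{l genus 9 Omega2 flag v4}, and twist by $-K_F$ to get $\cL_{3\omega_1+4\omega_2+(m-1)\omega_3}$; they then conclude with Proposition~\ref{p flag Kodaira} for $m\geq 2$ and with Proposition~\ref{p almost ample} (the $\P^1$-bundle contraction) for $m=1$.
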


\begin{proof}
The case (3) is settled by Remark \ref{r non-diag vanishing}. 
We use Notation \ref{nota g=9}. 
Lemma \ref{l genus 9 Omega2 flag v4} implies  
\[
H^i(\Sigma, \Omega_\Sigma^2(m))
\simeq
H^i(
F,
\cL_{\omega_1+2\omega_2+(m-3)\omega_3}
)
\]
for every $i$. 
We have  $\cL_{\omega_1+2\omega_2+(m-3)\omega_3}
\otimes\MO_F(-K_F)
\simeq
\cL_{3\omega_1+4\omega_2+(m-1)\omega_3}$. 
If (1) (resp.\ (2)) holds, 
then the assertion follows from 
Proposition \ref{p flag Kodaira} (resp.\ 
Proposition \ref{p almost ample}). 
\qedhere

\end{proof}

\begin{thm}\label{t genus 9 F-split v4}
Assume $p>2$. 
Let $X$ be a prime Fano threefold of genus $9$. 
%over an algebraicallyclosed field of characteristic $p>2$.
Then $X$ is $F$-split.
\end{thm}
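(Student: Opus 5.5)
The plan follows the proof of Theorem \ref{t genus 7 unified v1 F split}, with $\Sigma=\OGr^+(5,10)$ replaced by $\Sigma=\LG(3,6)$. By \cite[Theorem 1.2]{KTprime}, $X=\Sigma\cap\P^{10}\subset\P^{13}$ is a codimension-$3$ linear section of $\Sigma$, so $\cN^\vee_{X/\Sigma}\simeq\MO_X(-1)^{\oplus 3}$. By Proposition \ref{p quasi-F-split quadrics}(3), which applies since $g=9\geq5$, it suffices to prove $H^1(X,\Omega^2_X(p))=0$.

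\textbf{Step 1: reduce to the restriction of $\Omega^2_\Sigma$.} Apply $\wedge^2$ to the conormal sequence $0\to\MO_X(-1)^{\oplus3}\to\Omega^1_\Sigma|_X\to\Omega^1_X\to0$. This gives exact sequences
\[
0\to\cF\to\Omega^2_\Sigma|_X\to\Omega^2_X\to0,\qquad 0\to\MO_X(-2)^{\oplus3}\to\cF\to\Omega^1_X(-1)^{\oplus3}\to0
\]
for some vector bundle $\cF$. We have $H^2(X,\MO_X(p-2))=0$ because $H^2(X,\MO_X(n))=0$ for all $n$. Since $p\geq3$ gives $p-1\geq2$, Proposition \ref{p quasi-F-split quadrics}(1) yields $H^2(X,\Omega^1_X(p-1))=0$. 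Hence $H^2(X,\cF(p))=0$, and it suffices to show $H^1(X,\Omega^2_\Sigma(p)|_X)=0$.

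\textbf{Step 2: Koszul resolution on $\Sigma$.} The Koszul resolution of $X\subset\Sigma$ is
\[
0\to\MO_\Sigma(-3)\to\MO_\Sigma(-2)^{\oplus3}\to\MO_\Sigma(-1)^{\oplus3}\to\MO_\Sigma\to\MO_X\to0.
\]
Tensor it with $\Omega^2_\Sigma(p)$ and split it into short exact sequences. It then suffices to show
\[
H^{\ell+1}(\Sigma,\Omega^2_\Sigma(p-\ell))=0\qquad\text{for } 0\leq\ell\leq3.
\]
Since $p\geq3$, every twist satisfies $p-\ell\geq0$, so negative twists never occur. This is why no strong-linkage analysis like Proposition \ref{p g=7 ambient Omega2}(4) is needed here.

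\textbf{Step 3: case check via Proposition \ref{p genus 9 Omega2 vanishing v4}.}
\begin{enumerate}
\item If $p-\ell\geq2$, the vanishing is case (1), since $\ell+1\geq1$.
\item If $p-\ell=1$, then $\ell+1=p\geq3\neq1$, so case (2) applies.
\item If $p-\ell=0$, then $\ell=3=p$ and $\ell+1=4\neq2$, so case (3) applies.
\end{enumerate}
Combining the three steps gives $H^1(X,\Omega^2_X(p))=0$, and hence $X$ is $F$-split.

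The substantive input is Proposition \ref{p genus 9 Omega2 vanishing v4}, which rests on Lemma \ref{l genus 9 Omega2 flag v4} and the simplicity statement of Lemma \ref{l genus 9 Schur 31}, and that is already established. The only delicate point remaining is the boundary bookkeeping in Step 3 at $p=3$, where both $p-\ell=1$ (with $\ell=2$) and $p-\ell=0$ (with $\ell=3$) occur. The check above shows that the excluded degrees ($i=1$ for $m=1$, $i=2$ for $m=0$) are never hit.
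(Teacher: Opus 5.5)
Your proposal is correct and follows the paper's proof essentially step for step. It uses the same reduction via $\wedge^2$ of the conormal sequence together with Proposition \ref{p quasi-F-split quadrics}, and the same Koszul reduction to $H^{\ell+1}(\Sigma,\Omega^2_\Sigma(p-\ell))=0$ for $0\leq\ell\leq 3$. The case split through Proposition \ref{p genus 9 Omega2 vanishing v4} also matches: case (1) covers everything when $p\geq 5$, and cases (2) and (3) handle $(\ell,p)=(2,3)$ and $(3,3)$.
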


\begin{proof}
It is enough to show $H^1(X, \Omega_X^2(p))=0$ 
(Proposition \ref{p quasi-F-split quadrics}). 
Since $X \subset  \Sigma:=\LG(3, 6)$  is a codimension-$3$ linear section \cite[Theorem 1.2]{KTprime}, 
the conormal sequence of $X\subset \Sigma$ can be written as follows: 
\[
0
\to 
\cO_X(-1)^{\oplus3}
\to 
\Omega_\Sigma^1|_X
\to 
\Omega_X^1
\to 
0.
\]
Applying $\wedge^2$, 
we obtain the following exact sequences for some vector bundle $\cF$ on $X$: 
\[
0 \to \cF\to 
\Omega_\Sigma^2|_X
\to 
\Omega_X^2
\to 
0, \quad 
0
\to 
\cO_X(-2)^{\oplus3} \to \cF 
\to 
\Omega_X^1(-1)^{\oplus 3}
\to 0.
\]
By $H^2(X, \MO_X(p-2))= H^2(X, \Omega_X^1(p-1))=0$ 
(Proposition \ref{p quasi-F-split quadrics}), 
we get $H^2(X, \cF(p))=0$. 
Thus the problem is reduced to proving $H^1(X, \Omega^2_\Sigma(p)|_X)=0$. 
The Koszul exact sequence of $X\subset \Sigma$ is given by 
\[
0
\to 
\cO_\Sigma(-3)
\to 
\cO_\Sigma(-2)^{\oplus3}
\to 
\cO_\Sigma(-1)^{\oplus3}
\to 
\cO_\Sigma
\to 
\cO_X
\to 
0.
\]
Thus it is enough to prove
\begin{enumerate}
\item[(i)] $H^1(
\Sigma,
\Omega_\Sigma^2(p)
)
= H^2(
\Sigma,
\Omega_\Sigma^2(p-1)
)
=0$ 
\item[(ii)] 
$H^3(
\Sigma,
\Omega_\Sigma^2(p-2)
)
=
H^4(
\Sigma,
\Omega_\Sigma^2(p-3)
)
=0.$ 
\end{enumerate}
If $p \geq 5$, then (i) and (ii) follow from 
Proposition \ref{p genus 9 Omega2 vanishing v4}(1). 
Assume $p=3$. 
Then (i) (resp. (ii)) holds by 
Proposition \ref{p genus 9 Omega2 vanishing v4}(1) 
(resp.\ 
Proposition \ref{p genus 9 Omega2 vanishing v4}(2)(3)). 
\end{proof}

\begin{prop}
Assume $p=2$. 
Then there exists a prime Fano threefold $X$ of genus $9$ which is not $F$-split. 
\end{prop}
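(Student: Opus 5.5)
The plan is to produce an explicit non-$F$-split example by using the cohomological characterisation of $F$-splitting, rather than the vanishing criterion of Proposition \ref{p GFS criterion}, which only gives sufficient conditions. For a smooth projective variety $X$ of dimension $n$, $X$ is $F$-split if and only if the trace map $F_*\omega_X^{1-p}\otimes\omega_X\to\omega_X$ of Frobenius has a section. Dually, this holds if and only if the Frobenius pullback
\[
F^*\colon H^n(X,\omega_X)\to H^n(X,\omega_X^{\otimes p})
\]
is injective in a way compatible with the module structure. For a Fano threefold with $\omega_X=\MO_X(-1)$, this amounts to a nonvanishing statement for a specific splitting section in $H^0(X,\MO_X(p-1))=H^0(X,\MO_X(1))$ when $p=2$. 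So $X$ fails to be $F$-split exactly when no $s\in H^0(X,\MO_X(1))$ makes the map $\varphi\mapsto \mathrm{Tr}(F_*(s\varphi))$ surjective onto $\MO_X$.

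First, I would fix a convenient model. In characteristic $2$, take $\Sigma=\LG(3,6)\subset\P^{13}$ in its explicit embedding, with Plücker-type coordinates given by $1$, the entries of a symmetric $3\times 3$ matrix $Z$, the $2\times 2$ minors of $Z$, and $\det Z$. Choose a codimension-$3$ linear section $X=\Sigma\cap\P^{10}$ with coefficients in $\mathbb F_2$ and with many symmetries, so that smoothness can be checked by hand or reduced to a few cases using \cite[Theorem 1.2]{KTprime}. Then $X$ is a prime Fano threefold of genus $9$.

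Second, I would reduce the splitting question to a Fedder-type computation. Take a local (or homogeneous) presentation of $X$, either as a complete intersection inside the open cell $\mathbb A^6=\{\text{symmetric }Z\}$ of $\Sigma$, or inside the affine cone over $\Sigma$, which is $F$-split as a cone over a flag variety. On the cone, the Frobenius splittings of the linear section are controlled by $(h_1h_2h_3)^{p-1}\cdot\Phi$, where $\Phi$ is the splitting of the cone over $\Sigma$ coming from its Schubert/Bott--Samelson structure, and the $h_i$ are the three linear forms cutting out $X$. Non-$F$-splitting of $X$ then becomes the statement that this element lies in $\mathfrak m^{[p]}$ after applying the trace for every choice of degree-one twist. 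Since $p=2$, this is a finite check of a single coefficient (the coefficient of the ``top'' monomial).

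The main obstacle is the third step: choosing the three linear forms so that $X$ is still smooth while the relevant coefficient vanishes. I would first try linear forms that are $2$-th powers modulo the Pfaffian-type relations, for instance forms involving $\det Z$ and diagonal entries in a way that makes the coefficient visibly zero because of the characteristic-$2$ identity $(a+b)^2=a^2+b^2$. If this fails, I would fall back to an ordinariness obstruction. An $F$-split $X$ makes compatibly split anticanonical K3 sections available; I would then exhibit an $X$ all of whose hyperplane sections in $|-K_X|$ are supersingular or non-ordinary, detected by a Hasse--Witt computation over $\mathbb F_2$. Either way, the key verification is a finite computation, and I would also check smoothness of the chosen $X$ by the Jacobian criterion.
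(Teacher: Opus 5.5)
Your proposal has a genuine gap: it never produces a non-$F$-split example. The whole content of the statement is the existence of one smooth codimension-$3$ linear section of $\LG(3,6)$ that fails to be $F$-split. You only describe a search (``I would first try \dots; if this fails, I would fall back \dots''), with no reason why the chosen $X$ should be non-split.

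$F$-splitting is an open condition in the smooth family of linear sections, since for $p=2$ the space $H^0(X,\MO_X(-K_X))$ of candidate splittings has constant dimension $11$. So the non-split members form a proper closed subset, unless they happen to be all of them, which nothing in your argument shows. Taking coefficients in $\mathbb{F}_2$ with many symmetries gives no mechanism for landing in that subset.

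The missing idea is structural and specific to $p=2$. The variety $\LG(3,6)\subset\P^{13}$ has a special smooth hyperplane section $H\simeq G_2/P$ with $P$ a non-reduced parabolic subgroup \cite{Mac26}, and such an $H$ is not $F$-split \cite{LM97}. The paper takes $X=H\cap H'\cap H''$ with $H',H''$ general; this is smooth by Bertini, hence a prime Fano threefold of genus $9$. An $F$-splitting of $X$ would lift to $Y=H\cap H'$ and then to $H$ by \cite[Lemma 2.7]{CTW17}. For $p=2$ the obstruction groups are $H^1(Y,\MO_Y)$ and $H^1(H,\MO_H(1))$, which vanish by restriction from $\LG(3,6)$. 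No computation is needed.

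The individual steps you sketch would not close the gap either.
\begin{itemize}
\item A Fedder-type test on the open cell $\mathbb{A}^6\subset\Sigma$ is vacuous, because every smooth affine variety is $F$-split. Global non-splitting of the projective $X$ is only visible on its section ring.
\item On the cone $R$ over $\Sigma$, which is Gorenstein but not regular, the test is whether $\Phi(F_*(r\,h_1h_2h_3))$ is a nonzero constant for some $r$ of degree one, where $\Phi$ generates $\Hom_R(F_*R,R)$. This needs an explicit $\Phi$ for the cone over $\LG(3,6)$. It is not the check of a single top coefficient, which only makes sense for complete intersections in a polynomial ring.
\item Your fallback is circular. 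For $p=2$, a splitting of $X$ is given by a section of $\MO_X(-K_X)$ whose divisor $D$ is reduced and compatibly split. Conversely, any $F$-split $D\in|-K_X|$ lifts to a splitting of $X$, because the obstruction lies in $H^1(X,\omega_X)=0$. Hence ``every member of $|-K_X|\simeq\P^{10}$, including singular and reducible ones, is non-$F$-split'' is equivalent to what you want to prove, not a finite Hasse--Witt computation.
\item \cite[Theorem 1.2]{KTprime} is a classification result and does not help verify smoothness of a particular section.
\end{itemize}
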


\begin{proof}
Let $\LG(3, 6) \subset \P^{13}$ be the closed embedding  induced by the 
ample generator. 
By \cite[Propositions 2.37 and 2.38]{Mac26}, 
there exists 
a smooth hyperplane section $H$ of $\LG(3,6)\subset \P^{13}$ 
such that $H \simeq G_2/P$ 
and $H\not\simeq G_2/P_{\red}$ 
for some non-reduced parabolic subgroup $P$ of $G_2$. 
Then $H$ is not $F$-split by \cite{LM97} (cf.\ \cite[Theorem 6.4.4]{AWZ21}). 
Take two general hyperplane sections $H', H'' \in |\MO_{\LG(3, 6)}(1)|$. 
By the Bertini theorem and adjunction formula, 
we see that $X := H \cap H' \cap H''$ is a prime Fano threefold of genus $9$.

Suppose that $X$ is $F$-split. 
It suffices to derive a contradiction. 
We have 
\[
H \supset H \cap H' =:Y \supset H \cap H' \cap H'' = X. 
\]
It follows from \cite[Lemma 2.7]{CTW17} that $(Y, X)$ is $F$-split, and hence $Y$ is $F$-split. 
Applying the same argument again, we see that $(H, Y)$ is $F$-split, and hence $H$ is $F$-split, which is absurd. 
\end{proof}

\section{Genus 10}

\begin{nota}\label{n g=10}
Let $G$ be a simply connected semisimple algebraic group of type $G_2$. 
Fix 
\[
T \subset B \subset P \subset G,
\]
where $T$ is a maximal torus, $B$ is the negative Borel subgroup, and 
$P$ is  the maximal 
reduced parabolic subgroup corresponding to the long simple root. 
Let $L \subset P$ be the Levi subgroup containing $T$. 
%Fix a Borel subgroup $B$ contained in $P$. 
For $F := G/B$ and $\Sigma:=G/P$, 
let 
\[
\pi : F = G/B \to \Sigma= G/P
\]
be the natural morphism. 
For the tautological subbundle $\cU$ on $\Sigma$, 
we have $\wedge^2 \cU \simeq \MO_\Sigma(-1)$, $F = 
G/B \simeq \P_\Sigma(\cU)$, and 
$\pi$ coincides with the induced $\P^1$-bundle 
$\P_\Sigma(\cU) \to \Sigma$  \cite[Lemma 8.3]{Kuz06}. 
Let $\xi$ be the tautological divisor on $F$ of $\pi : 
F \simeq \P_\Sigma(\cU) \to \Sigma$. 
Note that $\Sigma$ is a $5$-dimensional flag variety 
such that $\omega_\Sigma \simeq \MO_\Sigma(-3)$. 
%Let $\Sigma = G / P \subset \P^{13}$ be the closed embedding nduced by $|\MO_\Sigma(1)|$. 
\end{nota}

\begin{rem}\label{r genus 10 tautological GL2}
We use Notation \ref{n g=10}. 
Then $L \simeq \GL_2$ \cite[Proposition A.2.1]{And09}. 
For $o:=eP\in \Sigma$, 
$U:=\cU_o$ is the standard  $2$-dimensional representation of
$L \simeq \GL_2$ \cite[Corollary A.6.2]{And09}. 
\end{rem}

\begin{prop}\label{p genus 10 cotangent sequence}
We use Notation \ref{n g=10}. 
Assume $p>3$. 
Then there is an exact sequence
\[
0
\to
\cO_\Sigma(-1)
\to
\Omega_\Sigma^1
\to
(\Sym^3\cU)(1)
\to
0.
\]
\end{prop}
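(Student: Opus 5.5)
The plan is to work throughout with the equivalence between $G$-equivariant vector bundles on $\Sigma=G/P$ and finite-dimensional $P$-modules, $\cE\mapsto\cE_o$ for $o=eP$. Under this equivalence $\Omega^1_{\Sigma,o}\simeq(\g/\p)^\vee$ as $P$-modules, where $\g=\Lie G$ and $\p=\Lie P$. I will (a) produce a $P$-stable filtration of $(\g/\p)^\vee$ with a $1$-dimensional sub and a $4$-dimensional quotient, and then (b) identify the two graded pieces with $\cO_\Sigma(-1)_o$ and $(\Sym^3\cU)(1)_o$.

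\textbf{Step 1: the filtration.} Choose the cocharacter $\eta$ dual to the long simple root $\beta$, so that $\langle\beta,\eta\rangle=1$ and $\langle\alpha,\eta\rangle=0$ for the short simple root $\alpha$. It gives a grading $\g=\bigoplus_{j=-2}^{2}\g_j$ with $\p=\bigoplus_{j\le 0}\g_j$ (since $B$ is the negative Borel). In $G_2$ the long simple root has coefficient $\le 2$ in every root, and:
\begin{itemize}
\item the roots of $\eta$-degree $1$ are $\beta,\beta+\alpha,\beta+2\alpha,\beta+3\alpha$;
\item the only root of $\eta$-degree $2$ is $2\beta+3\alpha$.
\end{itemize}
Hence $\dim\g_1=4$ and $\dim\g_2=1$. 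The subspace $(\g_{-1}\oplus\p)/\p\subset\g/\p$ is $P$-stable, because $\p$ has only non-positive degrees. This gives a $P$-module extension
\[
0\to\g_{-1}\to\g/\p\to\g_{-2}\to 0
\]
(working over $k$ with Chevalley-basis structure constants; the relevant constants are $\pm1,\pm2,\pm3$, all invertible for $p>3$). Dualising, we obtain
\[
0\to\g_{-2}^\vee\to\Omega^1_{\Sigma,o}\to\g_{-1}^\vee\to 0 .
\]
On each graded piece $R_u(P)$ acts trivially, since it raises the degree. So both pieces are $L$-modules inflated to $P$.

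\textbf{Step 2: identifying the pieces.} By Remark \ref{r genus 10 tautological GL2}, $L\simeq\GL_2$ and $U=\cU_o$ is its standard representation. The $T$-weights of $\g_{-1}^\vee\simeq\g_1$ (via the pairing $\g_1\times\g_{-1}\to\g_2\times\g_{-2}$, or simply by reading off weights) are $\beta+i\alpha$ for $0\le i\le 3$. These form an $\alpha$-string of length $4$. Therefore, after twisting by the appropriate character of $L$, they coincide with the weights of $\Sym^3U\otimes(\det U)^{-1}$. Since $p>3$, $\Sym^3U$ is a simple $\GL_2$-module: $\Sym^3U\simeq H^0(\cL_{(3,0)})$ and $3<p$, so the strong linkage principle (Theorem \ref{t SLP}) leaves no other composition factors. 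A $4$-dimensional $L$-module with these weights that contains $\Sym^3U\otimes\chi$ as a composition factor must therefore be isomorphic to it, for some character $\chi$.

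To pin down the characters I would use determinants rather than track sign conventions directly.
\begin{itemize}
\item We have $\det\Omega^1_\Sigma\simeq\cO_\Sigma(-3)$.
\item Using $\wedge^2\cU\simeq\cO_\Sigma(-1)$, we get $\det(\Sym^3\cU)\simeq(\det\cU)^{6}\simeq\cO_\Sigma(-6)$, so $\det\bigl((\Sym^3\cU)(1)\bigr)\simeq\cO_\Sigma(-2)$.
\end{itemize}
Write the quotient as $(\Sym^3\cU)(a)$. The $1$-dimensional piece $\g_{-2}^\vee$ has weight $2\beta+3\alpha$, which is $\eta$-dual to the determinant-type character of $L$. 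So it is a power $\cO_\Sigma(b)$ of the generator. Then $b+(-6+4a)=-3$. Comparing the weight $2\beta+3\alpha$ with the sum of the two middle weights $\beta+\alpha$ and $\beta+2\alpha$ of the string gives the additional relation $b=2(a-3/2)+\dots$; concretely, $\g_2=[\g_1,\g_1]$ yields $\g_{-2}^\vee\simeq\wedge^2$-type contraction of the quotient piece with itself. This forces $a=1$ and $b=-1$. Finally, translating the $P$-module sequence back into equivariant bundles gives the claimed exact sequence.

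\textbf{Main obstacle.} The delicate step is matching the twists, i.e.\ showing that the quotient is $(\Sym^3\cU)(1)$ rather than another twist. Here I would explicitly compute the restriction of the weights $\beta+i\alpha$ to $T\subset L\simeq\GL_2$ in coordinates where $U$ has weights $(1,0),(0,1)$, using the convention of Remark \ref{r genus 10 tautological GL2}. The determinant identity $\det\Omega^1_\Sigma=\cO(-3)$ then serves as a consistency check. A secondary point is verifying that the structure constants involved are invertible, which is where $p>3$ is used, in addition to the simplicity of $\Sym^3U$.
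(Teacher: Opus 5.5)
Your strategy is the paper's. You grade $\g$ by the coefficient of the long simple root and take the two-step $P$-filtration of $\g/\p$. You note that $R_u(P)$ acts trivially on the graded pieces, identify the $4$-dimensional piece with $\Sym^3U$ up to a character using simplicity for $p>3$, and fix twists by determinants.

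The real difference is how the twists are pinned down. The paper reads off the line directly: $\g/\g_{\le 1}\simeq\g_{3\alpha_1+2\alpha_2}$ and $3\alpha_1+2\alpha_2=\omega_2$, so this piece is $\cL_{\omega_2}\simeq\cO_\Sigma(1)$. Then $\det\Omega^1_\Sigma\simeq\cO_\Sigma(-3)$ alone forces $n=1$.

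You keep both twists unknown and get a second equation from the $T$-weight identity $2\beta+3\alpha=(\beta+\alpha)+(\beta+2\alpha)$; no bracket is needed. The relation is exactly $b=2a-3$, so drop the ``$+\dots$''. Indeed, if the quotient is $\Sym^3U\otimes(\det U)^{-a}$, its two middle weights sum to $(\det U)^{3-2a}$. Together with $b+4a-6=-3$ this gives $a=1$, $b=-1$.

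What this buys you is independence from the convention matching characters of $P$ with powers of $\cO_\Sigma(1)$; only $\wedge^2\cU\simeq\cO_\Sigma(-1)$ is used. The paper's route is shorter. Likewise, you obtain $\g_1\simeq\Sym^3U\otimes\chi$ by a highest-weight argument over $L\simeq\GL_2$: the top weight of the $\alpha$-string has multiplicity one, so $\Sym^3U\otimes\chi$ is a $4$-dimensional composition factor. The paper instead restricts to $S=[L,L]$ and applies Lemma \ref{l character twist}. The two are equivalent.

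Your Step 1 has the degree signs wrong. With $\p=\bigoplus_{j\le 0}\g_j$ you have $\g_{-1}\subset\p$, so $(\g_{-1}\oplus\p)/\p=0$. The correct statements are these:
\begin{itemize}
\item the $P$-stable subspace is $\g_{\le 1}/\g_{\le 0}\simeq\g_1$;
\item the sequence is $0\to\g_1\to\g/\p\to\g_2\to 0$;
\item the line in $\Omega^1_{\Sigma,o}$ is $\g_2^\vee$, of weight $-(2\beta+3\alpha)$;
\item $R_u(P)$, whose Lie algebra is $\g_{<0}$, lowers degree.
\end{itemize}
Your twist computation survives only because $b=2a-3$ is insensitive to a global sign. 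Had you read $b$ off the weight $+(2\beta+3\alpha)=\omega_\beta$ as you wrote it, you would get $b=1$, and the determinant equation would force $a=1/2$.

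Also, to upgrade your $L$-isomorphism to a $P$-isomorphism you need $R_u(P)$ to act trivially on $U$ itself, not only on the graded pieces of $\g/\p$. This holds because $U$ is a simple $P$-module, as in Step 3 of the paper's proof, but you did not state it.
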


\begin{proof}
Set $\g:=\Lie(G)$ and $\p := \Lie(P)$. 
Let $\alpha_1$ and $\alpha_2$ be the short and long simple roots of $G$,
respectively.
The positive roots of $G$ are
\[
\alpha_1,
\qquad
\alpha_2,
\qquad
\alpha_1+\alpha_2,
\qquad
2\alpha_1+\alpha_2,
\qquad
3\alpha_1+\alpha_2,
\qquad
3\alpha_1+2\alpha_2.
\]
We have the following graded Lie algebra structure:
\[
\mathfrak g
= 
\mathfrak t \oplus \bigoplus_{\alpha} \mathfrak g_{\alpha}=
\mathfrak g_{-2}
\oplus
\mathfrak g_{-1}
\oplus
\mathfrak g_0
\oplus
\mathfrak g_1
\oplus
\mathfrak g_2, 
\]
where $\g_d$ is the direct sum of the root spaces 
$\g_{\beta}$ such that the coefficient of $\alpha_2$ in 
$\beta$ is equal to $d$. 
More explicitly, we have
\[
\begin{aligned}
\g_{-2}
&=
\g_{-(3\alpha_1+2\alpha_2)},\\
\g_{-1}
&=
\g_{-\alpha_2}
\oplus
\g_{-(\alpha_1+\alpha_2)}
\oplus
\g_{-(2\alpha_1+\alpha_2)}
\oplus
\g_{-(3\alpha_1+\alpha_2)},\\
\g_0
&=
\mathfrak t
\oplus
\g_{\alpha_1}
\oplus
\g_{-\alpha_1},\\
\g_1
&=
\g_{\alpha_2}
\oplus
\g_{\alpha_1+\alpha_2}
\oplus
\g_{2\alpha_1+\alpha_2}
\oplus
\g_{3\alpha_1+\alpha_2},\\
\g_2
&=
\g_{3\alpha_1+2\alpha_2}.
\end{aligned}
\]
% For example, $\mathfrak g_0 = \mathfrak t \oplus 
% \mathfrak g_{\alpha_1} \oplus \mathfrak g_{-\alpha_1}$ 
% and $\g_2 = \mathfrak g_{3\alpha_1+2\alpha_2}$. 
As $\dim \mathfrak t =2$, we obtain 
\[ 
\dim \mathfrak g_{-2}=1,\qquad \dim \mathfrak g_{-1}=4,\qquad \dim \mathfrak g_0=4,\qquad \dim \mathfrak g_1=4,\qquad \dim \mathfrak g_2=1. 
\]
Recall that %an algebraic group $H$ acts on $\Lie(H)$, and hence 
$\g$ is a $G$-module, and hence a $P$-module. 
Moreover, each $\mathfrak g_{\leq d}:=\bigoplus_{e \leq d} \g_e$ is a $P$-submodule of $\mathfrak g$. 
%In particular, $\p  = \g_{\leq 0}$ a $P$-submodule. 
% In particular, $\g_{\leq 1}/\g_{\leq 0}$ in $\mathfrak g/\g_{\leq 0}$ is a $P$-submodule. 
Hence we get a natural  exact sequence of $P$-modules: 
% Thus the tangent representation at the base point $o:=eP$ has a
% $P$-stable filtration
\[
0
\to
\g_{\leq 1}/\g_{\leq 0}
\to
\g/\g_{\leq 0}
\to
\g/\g_{\leq 1}
\to
0, 
\]
which induces 
the corresponding exact sequence 
\begin{equation}\label{e1 genus 10 cotangent sequence}
0 \to \cE(\g_{\leq 1}/\g_{\leq 0}) \to \cE(\g/\g_{\leq 0}) 
\to \cE(\g/\g_{\leq 1}) \to 0 
\end{equation}
of homogeneous vector bundles on $G/P =\Sigma$, 
where $\cE(-) := G \times^{P} (-)$. 
Set 
$\g'_{1} := \g_{\leq 1}/ \g_{\leq 0},$ which is a $P$-module. 
%Let $L$ be the Levi subgroup $L$ of $P$ containing $T$. 
Furthermore, $\p = \Lie(P) =\g_{\leq 0}=
\mathfrak g_0
\oplus
\mathfrak g_{-1}
\oplus
\mathfrak g_{-2}.$ 

\setcounter{step}{0}

\begin{step}\label{s1 genus 10 cotangent sequence}
$\cE(\g/\g_{\leq 0}) \simeq T_\Sigma$ and $\cE(\g/\g_{\leq 1}) \simeq \MO_\Sigma(1)$. 
\end{step}

\begin{proof}[Proof of Step \ref{s1 genus 10 cotangent sequence}]
It holds that $\cE(\g/\g_{\leq 0}) 
=\cE(\g/\p) \simeq 
T_{G/P} = T_\Sigma$. 
Let us prove that 
$\cE(\g/\g_{\leq 1}) \simeq \MO_\Sigma(1)$. 
Recall that $\MO_\Sigma(1) \simeq \cL_{\omega_2}$. 
As $\dim (\g/\g_{\leq 1}) = \dim (\g_{\leq 2}/\g_{\leq 1}) = \dim \g_2 =1$, 
it is enough to establish a $T$-module isomorphism  
$\g/\g_{\leq 1} \simeq \g_{\omega_2}$, 
which is ensured by 
%which follows from %We have a $T$-module isomorphism 
$\g/\g_{\leq 1} \simeq 
\g_2 = \g_{3\alpha_1+2\alpha_2} = \g_{\omega_2}$. 
This completes the proof of Step \ref{s1 genus 10 cotangent sequence}.
\end{proof}

Set %$\mathfrak t:=\Lie(T)$, 
$o:=eP\in\Sigma$ and $U:=\cU_o$. 

\begin{step}\label{s2 genus 10 cotangent sequence}
There is an $L$-module isomorphism 
\[
\g'_{1} \otimes_k k_{\chi} \simeq \Sym^3(U)
\]
between $P$-modules for some $\chi \in X(P)$. 
\end{step}

\begin{proof}[Proof of Step \ref{s2 genus 10 cotangent sequence}]
By $S := [L, L]  \triangleleft L$, $X(L) = X(P)$, and 
Lemma \ref{l character twist} below, it is enough to show that 
\begin{enumerate}
\item[(i)] $\Sym^3(U)$ is a simple $S$-module, and 
\item[(ii)] 
there is an $S$-module isomorphism $\g'_1 \simeq \Sym^3(U)$. 
\end{enumerate}
In what follows, we use the identification $L = \GL_2$ (Remark \ref{r genus 10 tautological GL2}). 
We then get $S = [L, L] = \SL_2$.

Let us show (i). 
Since $U := \cU_o$ is the standard $2$-dimensional representation of $L= \GL_2$ (Remark \ref{r genus 10 tautological GL2}), 
$\Sym^3 U$ is a simple $\GL_2$-module by $p>3$ \cite[Corollary (2.6e)]{Gre07}, and hence a simple $\SL_2$-module. 
Thus (i) holds. 
Let us show (ii). 
Note that $\dim(\Sym^3 U) =4$ and  $\Sym^3 U$ is of highest weight $3$. 
By (i), it is enough to prove that $\dim \g'_1 =4$ and the largest weight of $\SL_2$-module $\g'_1$ is $3$. 
These follow from the $T$-module isomorphism
\[
\g'_1 \simeq \g_1=
\g_{\alpha_2}
\oplus
\g_{\alpha_1+\alpha_2}
\oplus
\g_{2\alpha_1+\alpha_2}
\oplus
\g_{3\alpha_1+\alpha_2}, 
\]
together with $\langle \alpha_2,\alpha_1^\vee\rangle=-3,
\langle \alpha_1+\alpha_2,\alpha_1^\vee\rangle=-1, 
\langle 2\alpha_1+\alpha_2,\alpha_1^\vee\rangle=1, 
\langle 3\alpha_1+\alpha_2,\alpha_1^\vee\rangle=3.$ 
Thus (ii) holds. 
This completes the proof of Step \ref{s2 genus 10 cotangent sequence}.
\qedhere

\end{proof}

\begin{step}\label{s3 genus 10 cotangent sequence}
There is a $P$-module isomorphism 
\[
\g'_{1} \otimes_k k_{\chi} \simeq \Sym^3(U)
\]
between $P$-modules for some $\chi \in X(P)$. 
\end{step}

\begin{proof}[Proof of Step \ref{s3 genus 10 cotangent sequence}]
By Step \ref{s2 genus 10 cotangent sequence} and $P =L \cdot R_u(P)$, 
it is enough to show that the $R_u(P)$-actions on 
$\g'_{1} \otimes_k k_{\chi}$ and $\Sym^3(U)$ are trivial. 
The $R_u(P)$-action on $k_{\chi}$ is trivial, as $R_u(P)$ is unipotent. 
The $R_u(P)$-action on $\g'_{1}$ is also trivial by  \cite[Proposition 22.14]{Mil17}. 
To summarise, the $R_u(P)$-action on $\g'_{1} \otimes_k k_{\chi}$ is trivial. 

Concerning $\Sym^3(U)$, 
it is enough to prove that the $R_u(P)$-action on $U$ is trivial. 
Since $U$ is a simple $S$-module, $U$ is also a simple $P$-module.
Hence $R_u(P)$ acts trivially on $U$ by \cite[Lemma 19.16]{Mil17}. 
This completes the proof of Step \ref{s3 genus 10 cotangent sequence}.
\end{proof}

\begin{step}\label{s4 genus 10 cotangent sequence}
There is an exact sequence 
$0 \to \MO_\Sigma(-1) \to \Omega_\Sigma^1 \to (\Sym^3 \cU)(1) \to 0$. 
\end{step}

\begin{proof}[Proof of Step \ref{s4 genus 10 cotangent sequence}]
By the $P$-module isomorphism $\g'_{1} \otimes_k k_{\chi} \simeq \Sym^3(U)$ established in Step \ref{s3 genus 10 cotangent sequence}, we get  
\[
\cE(\g'_{1}) \otimes_{\MO_\Sigma} \cL_{\chi} 
\simeq 
\cE(\g'_{1} \otimes_k k_{\chi}) \simeq \cE(\Sym^3(U)) 
\simeq \Sym^3( \cE(U)) \simeq \Sym^3 \cU.
\]
As $p >3$, we have 
$(\Sym^3 \cU)^{\vee} \simeq \Sym^3 (\cU^{\vee}) 
\simeq \Sym^3(\cU(1))$. 
These, together with (\ref{e1 genus 10 cotangent sequence}) 
and Step \ref{s1 genus 10 cotangent sequence}, 
induce an exact sequence %which implies %To summarise, we get 
\[
0 \to \MO_\Sigma(-1) \to \Omega_\Sigma^1 \to (\Sym^3 \cU)(n) \to 0
\]
for some $n \in \Z$. 
By $\rank \cU=2$ and $\rank (\Sym^3 \cU)=4$, we have 
\[
\det ((\Sym^3 \cU)(n)) \simeq \det(\Sym^3 \cU) 
\otimes \MO_\Sigma(4n) \simeq (\det \cU)^{\otimes 6} 
\otimes \MO_\Sigma(4n) \simeq \MO_\Sigma(4n-6).
\]
Taking the determinant bundles of the above exact sequence, 
we get $\MO_\Sigma(-3) \simeq \det \Omega_\Sigma^1 \simeq 
\MO_\Sigma(-1) \otimes \det ((\Sym^3 \cU)(n)) \simeq \MO_\Sigma(4n-7)$, 
which implies $n=1$. 
This completes the proof of Step \ref{s4 genus 10 cotangent sequence}.
\end{proof}
Step \ref{s4 genus 10 cotangent sequence} completes the proof of Proposition \ref{p genus 10 cotangent sequence}. 
\end{proof}

\begin{lem}\label{l character twist}
Let $L$ be an algebraic group and let $S\triangleleft L$ be a closed normal subgroup.
Let $V$ and $W$ be finite-dimensional $L$-modules. 
Assume that $V$ and $W$ are simple $S$-modules and isomorphic as $S$-modules. 
Then there exists 
an $L$-module isomorphism $V\otimes_k k_\chi\simeq W$ 
for some character $\chi\in X(L)$. 
\end{lem}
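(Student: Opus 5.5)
The plan is to compare $V$ and $W$ through their $S$-isomorphism and use Schur's lemma to see that the $L$-action on the $S$-isomorphisms is by a character. Set $M := \Hom_S(V, W)$. Since $S \triangleleft L$, the group $L$ acts on $\Hom_k(V,W)$ by $(l\cdot\varphi)(v) := l\varphi(l^{-1}v)$. This action preserves $M$: for $s \in S$ we have $l^{-1}sl \in S$, and therefore
\[
s (l\cdot \varphi)(v) = l (l^{-1}sl)\varphi(l^{-1}v) = l\varphi(l^{-1}sl\, l^{-1}v) = (l\cdot\varphi)(sv).
\]
We also need this action to be algebraic, i.e.\ $M$ should be an $L$-submodule of the rational $L$-module $\Hom_k(V,W) \simeq V^\vee\otimes W$. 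This holds because $M$ is a subspace stable under the $k$-points of $L$, and $L$ is smooth, so stability under $L(k)$ suffices.

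Next, $\dim_k M = 1$. Indeed $V \simeq W$ as $S$-modules, so $M \simeq \End_S(V)$. Since $V$ is a simple $S$-module, finite-dimensional over the algebraically closed field $k$, Schur's lemma gives $\End_S(V) = k$. So $M$ is a one-dimensional $L$-module, i.e.\ $M \simeq k_{\psi}$ for some $\psi \in X(L)$. Concretely, fix a nonzero $\varphi_0 \in M$, which is an $S$-isomorphism $V \to W$ because $V$ is simple. Then $l\cdot\varphi_0 = \psi(l)\varphi_0$ for all $l\in L$, which unwinds to
\[
l\varphi_0(l^{-1}v) = \psi(l)\varphi_0(v), \qquad\text{equivalently}\qquad \varphi_0(lv) = \psi(l)^{-1}\, l\varphi_0(v).
\]

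Finally, take $\chi := \psi \in X(L)$ and define $\Phi : V\otimes_k k_\chi \to W$ by $v\otimes 1 \mapsto \varphi_0(v)$. This is a linear isomorphism. It is $L$-equivariant because
\[
\Phi(l(v\otimes 1)) = \chi(l)\varphi_0(lv) = \chi(l)\psi(l)^{-1} l\varphi_0(v) = l\Phi(v\otimes 1).
\]
If sign conventions make the character come out as $\psi^{-1}$, I will simply replace $\chi$ accordingly.

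The only step needing care is the rationality of the $L$-action on $M$, which I handle by viewing $M$ as an $L(k)$-stable subspace of $V^\vee \otimes W$. Alternatively, one can express $M = (V^\vee\otimes W)^S$ as the $S$-fixed subspace, which is an $L$-submodule since $S$ is normal in $L$. Everything else is Schur's lemma.
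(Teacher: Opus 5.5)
Your proof is correct and follows the same route as the paper. Normality of $S$ makes $\Hom_S(V,W)$ an $L$-submodule of $\Hom_k(V,W)$, Schur's lemma makes it one-dimensional (so isomorphic to $k_\chi$), and a nonzero element then gives the $L$-isomorphism $V\otimes_k k_\chi\simeq W$. You additionally justify why the $L$-action on this submodule is algebraic and verify the equivariance explicitly, both of which the paper leaves implicit.
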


\begin{proof}
Since $S\triangleleft L$, the vector space $\Hom_S(V,W)$ is an $L$-submodule of $\Hom_k(V,W)$.
By Schur's lemma,
we have $\dim_k\Hom_S(V,W)=1.$ 
Hence $\Hom_S(V,W)\simeq k_\chi$ as $L$-modules for some $\chi\in X(L)$.
A nonzero element of $\Hom_S(V,W)$ then induces an $L$-module isomorphism $V\otimes_k k_\chi\simeq W$. 
\end{proof}

\begin{lem}\label{l wedge Sym3 rank2}
Let $\cE$ be a vector bundle of rank $2$ on a $\Z[1/6]$-scheme $X$.  
Then there is an exact sequence
\[
0
\to 
(\det\cE)^{\otimes3}
\to 
\wedge^2(\Sym^3\cE)
\xrightarrow{\varphi}
\det\cE \otimes 
\Sym^4\cE
\to 
0.
\]
\end{lem}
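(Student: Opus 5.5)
We define $\varphi$ explicitly, check the composite is zero, and then verify exactness locally, where $\cE$ is free. Everything is natural in $\cE$ and compatible with base change, so we may construct the maps globally and check exactness on an open cover trivialising $\cE$, i.e.\ for $\cE=\MO_X e_1\oplus\MO_X e_2$.

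\textbf{The map $\varphi$.} Identify $\Sym^3\cE$ with cubic forms in $e_1,e_2$. Set
\[
\varphi(f\wedge g):=\tfrac{1}{9}\,(e_1\wedge e_2)\otimes\Bigl(\tfrac{\partial f}{\partial e_1}\tfrac{\partial g}{\partial e_2}-\tfrac{\partial f}{\partial e_2}\tfrac{\partial g}{\partial e_1}\Bigr),
\]
the Jacobian bracket. Since $f,g$ are cubics, the bracket is a quartic. It is alternating in $(f,g)$, so it descends to $\wedge^2$. To see that it is independent of the basis, note that under a change of basis $A$ the bracket scales by $\det A$, which is exactly absorbed by the factor $e_1\wedge e_2\in\det\cE$. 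The divisions by $3$ from differentiating cubics are legitimate because $6$ is invertible. Hence $\varphi$ glues to a global morphism $\wedge^2(\Sym^3\cE)\to\det\cE\otimes\Sym^4\cE$.

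\textbf{The map $\iota$ and $\varphi\circ\iota=0$.} The kernel should be the transvectant of order $3$. Define $\iota:(\det\cE)^{\otimes3}\to\wedge^2\Sym^3\cE$ by sending $(e_1\wedge e_2)^{\otimes 3}$ to
\[
e_1^3\wedge e_2^3-3\,e_1^2e_2\wedge e_1e_2^2 .
\]
Up to a unit this is the $\SL_2$-invariant element; it is the image of the invariant in $(\Sym^3)^{\otimes2}$ given by the third power of the determinant pairing. Naturality, and hence gluing, follows from invariance of this element under $\GL_2$ up to $\det^3$. One checks this via a computation on elementary matrices, or by observing that it is the image of $(e_1\otimes e_2-e_2\otimes e_1)^{\otimes3}$ under the symmetrisation map.

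Next compute $\varphi\circ\iota$:
\begin{itemize}
\item $\{e_1^3,e_2^3\}=9e_1^2e_2^2$;
\item $\{e_1^2e_2,e_1e_2^2\}=(2e_1e_2)(2e_1e_2)-e_1^2e_2^2=3e_1^2e_2^2$.
\end{itemize}
Therefore $9-3\cdot3=0$, so $\varphi\circ\iota=0$.

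\textbf{Exactness by ranks.} In the free case, $\wedge^2\Sym^3$ has rank $6$ and the outer terms have ranks $1$ and $5$. So it suffices to prove two things: $\varphi$ is surjective, and $\iota$ is a split injection with image equal to $\ker\varphi$.

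For surjectivity, compute $\varphi$ on the basis $e_1^{3-a}e_2^a\wedge e_1^{3-b}e_2^b$. The bracket of two monomials is a monomial with coefficient
\[
(3-a)b-a(3-b)=3(b-a).
\]
The basis pairs $(a,b)=(0,1),(0,2),(1,3),(2,3)$ then map to the monomials $e_1^4, e_1^3e_2, e_1e_2^3, e_2^4$ with coefficients that are units, once $2$ and $3$ are invertible. The pairs $(0,3)$ and $(1,2)$ both map to multiples of $e_1^2e_2^2$, again by units. Hence $\varphi$ is surjective.

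For the kernel, it is free of rank $1$, being the kernel of a surjection of free modules. Restricted to the span of the two basis vectors with $a+b=3$, the map $\varphi$ is a nonzero linear form with unit coefficients ($\tfrac{1}{9}\cdot 9$ and $\tfrac{1}{9}\cdot 3$). Its kernel is therefore generated by $\iota$ of the generator, and this element is unimodular because its coefficient of $e_1^3\wedge e_2^3$ is $1$. All other basis vectors map to distinct monomials with unit coefficients, so they contribute nothing to the kernel. Thus $\iota$ is a split injection onto $\ker\varphi$, and the sequence is exact.

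\textbf{Main obstacle.} The delicate point is the well-definedness and gluing of $\iota$, i.e.\ its $\GL_2$-equivariance with the $\det^3$ twist. If the direct verification gets heavy, I would instead define $\iota$ as the inclusion of $\ker\varphi$, which is intrinsically defined since $\varphi$ is. Then I would identify this kernel with $(\det\cE)^{\otimes3}$ by a determinant computation: taking $\det$ of the exact sequence gives
\[
\det\bigl(\wedge^2\Sym^3\cE\bigr)\simeq(\det\cE)^{18},\qquad \det\bigl(\det\cE\otimes\Sym^4\cE\bigr)\simeq(\det\cE)^{5}\otimes(\det\cE)^{10}=(\det\cE)^{15}.
\]
Since a rank-one kernel is a line bundle equal to the quotient of these determinants, the kernel is $(\det\cE)^{18-15}=(\det\cE)^{3}$. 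This route avoids any explicit equivariance check.
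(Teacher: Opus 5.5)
Your proof is correct, but after defining $\varphi$ it takes a different route from the paper.

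Your $\varphi$ is $\tfrac19$ times the paper's map $\psi\circ\wedge^2\delta$. Here $\delta(uvw)=u\otimes vw+v\otimes uw+w\otimes uv$ and $\psi((a\otimes q)\wedge(b\otimes r))=(a\wedge b)\otimes qr$. Indeed $\delta(f)=e_1\otimes\partial f/\partial e_1+e_2\otimes\partial f/\partial e_2$, so $\psi(\delta f\wedge\delta g)=(e_1\wedge e_2)\otimes\{f,g\}$, and this description makes basis-independence automatic.

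The paper then works on geometric fibres $V$:
\begin{itemize}
\item it computes $\varphi_V(x^3\wedge y^3)=9(x\wedge y)\otimes x^2y^2\neq 0$;
\item it uses $\GL(V)$-equivariance and the simplicity of $\Sym^4V$ in characteristic $\neq 2,3$ to get surjectivity of $\varphi_V$, hence of $\varphi$;
\item it identifies the line-bundle kernel with $(\det\cE)^{\otimes 3}$ by the determinant count $18-15=3$, which is exactly your fallback.
\end{itemize}

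You instead evaluate $\varphi$ on the full monomial basis over an arbitrary $\Z[1/6]$-algebra, using $\{e_1^{3-a}e_2^a,\,e_1^{3-b}e_2^b\}=3(b-a)\,e_1^{5-a-b}e_2^{a+b-1}$. Surjectivity and the kernel are then read off directly. You also write down the kernel generator $e_1^3\wedge e_2^3-3\,e_1^2e_2\wedge e_1e_2^2$ as half of the image of the cube of $e_1\otimes e_2-e_2\otimes e_1$, which makes $\iota$ canonical.

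The paper's argument is shorter and avoids any explicit kernel or equivariance check. The cost is invoking simplicity of $\Sym^4V$ and passing through fibres. Yours is entirely elementary and needs nothing beyond $6$ being invertible. It also produces a natural map $(\det\cE)^{\otimes3}\to\wedge^2(\Sym^3\cE)$, rather than only an abstract isomorphism of the kernel with $(\det\cE)^{\otimes3}$.
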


\begin{proof}
Consider the following $\MO_X$-module homomorphisms:   
\[
\delta:
\Sym^3\cE
\to 
\cE\otimes\Sym^2\cE, \qquad 
uvw \mapsto 
u\otimes vw
+
v\otimes uw
+
w\otimes uv
\]
and 
\[
\psi:
\wedge^2(\cE\otimes\Sym^2\cE)
\to
\wedge^2\cE\otimes\Sym^4\cE, \qquad 
(a\otimes q)\wedge(b\otimes r)
\mapsto 
(a\wedge b)\otimes qr.
\]
We set
$\varphi 
:=
\psi\circ(\wedge^2\delta)$, i.e., 
$
\varphi: 
\wedge^2(\Sym^3\cE)
\xrightarrow{\wedge^2 \delta}
\wedge^2(\cE\otimes\Sym^2\cE) \xrightarrow{\psi} 
\wedge^2\cE\otimes\Sym^4\cE.$

For every geometric fibre $V$ of $\cE$ of characteristic $p\geq 0$, 
the induced $\GL(V)$-module homomorphism  
$\varphi_V : \wedge^2(\Sym^3V)
\to 
\wedge^2V \otimes\Sym^4V$ is nonzero, because 
the following holds for a linear basis $x, y$ of $V$: 
\[
\varphi_V(x^3\wedge y^3)
=
\psi(
(\wedge^2\delta)(x^3\wedge y^3)
)
=
\psi(
\delta(x^3)\wedge\delta(y^3)
)
=
\psi(
(3x\otimes x^2)\wedge(3y\otimes y^2)
)
\]
\[
=
9\psi(
(x\otimes x^2)\wedge(y\otimes y^2)
)=
9(x\wedge y)\otimes x^2y^2
\neq
0.
\]
Since $\Sym^4 V$ is a simple $\GL(V)$-module by $p \not\in \{2, 3\}$, 
$\varphi_V$ is surjective. 
Hence $\varphi$ is surjective. 
Then  $\cK:=\Ker(\varphi)$ is a line bundle.
By $\det(\wedge^2(\Sym^3\cE))
\simeq
(\det\cE)^{\otimes18}$ and
$\det(\Sym^4\cE\otimes\det\cE)
\simeq
(\det\cE)^{\otimes15}$, we get 
$\cK \simeq (\det\cE)^{\otimes3}$, as required. 
\end{proof}

\begin{prop}\label{p genus 10 Omega2 vanishing v2}
We use Notation \ref{n g=10}. 
Assume that $p>3$.
Then $H^i(
\Sigma,
\Omega_\Sigma^2(m)
)
=0$ for every $i>0$ and every  $m\geq 2$.
\end{prop}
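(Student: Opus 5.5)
Apply $\wedge^2$ to the exact sequence of Proposition \ref{p genus 10 cotangent sequence}, $0\to\cO_\Sigma(-1)\to\Omega^1_\Sigma\to(\Sym^3\cU)(1)\to 0$. Writing $\cS:=(\Sym^3\cU)(1)$, this gives an exact sequence
\[
0\to \cS(-1)\to \Omega^2_\Sigma\to \wedge^2\cS\to 0,
\]
so it suffices to prove, for $i>0$ and $m\geq 2$, the two vanishings
\[
H^i(\Sigma,(\Sym^3\cU)(m))=0 \qquad\text{and}\qquad H^i(\Sigma,\wedge^2(\Sym^3\cU)(m+2))=0 .
\]
For the second, use Lemma \ref{l wedge Sym3 rank2} with $\det\cU\simeq\cO_\Sigma(-1)$. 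It yields
\[
0\to\cO_\Sigma(m-1)\to\wedge^2(\Sym^3\cU)(m+2)\to(\Sym^4\cU)(m+1)\to 0 .
\]
We have $H^{>0}(\Sigma,\cO_\Sigma(m-1))=0$ by Proposition \ref{p flag Kodaira}. Everything therefore reduces to vanishing of higher cohomology of $(\Sym^a\cU)(n)$ for $(a,n)=(3,m)$ and $(4,m+1)$.

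To handle these, pass to $F=\P_\Sigma(\cU)$ with tautological divisor $\xi$. Since $\pi$ is a $\P^1$-bundle, $R\pi_*\cO_F(a\xi)\simeq\Sym^a\cU$ for $a\geq0$. Hence
\[
H^i(\Sigma,(\Sym^a\cU)(n))\simeq H^i(F,a\xi+n\pi^*H).
\]
By Proposition \ref{p flag Kodaira} it is then enough to check that $a\xi+n\pi^*H-K_F$ is ample. Note that $-K_F=2\xi-\pi^*\det\cU+3\pi^*H=2\xi+4\pi^*H$. So we need ampleness of $(a+2)\xi+(n+4)\pi^*H$ in the cases $a=3,\ n\geq2$ and $a=4,\ n\geq3$.

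For this we need the nef cone of $F$, which is spanned by the two fundamental weights, i.e.\ by $\pi^*H$ and the pullback of the ample generator along the other $\P^1$-bundle contraction $F\to G/P'$. Since $\cU^\vee\simeq\cU(1)$ is globally generated (it is a quotient of the trivial bundle of the $7$-dimensional representation), $\xi+\pi^*H$ is nef. It is not a multiple of $\pi^*H$, so it must be the other extremal ray. Consequently $(a+2)\xi+(n+4)\pi^*H=(a+2)(\xi+\pi^*H)+(n+2-a)\pi^*H$ is ample exactly when $n+2-a>0$. For $(3,m)$ with $m\geq2$ we get $m-1>0$, and for $(4,m+1)$ we get $m-1>0$, so both cases are fine.

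The main point of care is the identification of the nef cone, i.e.\ that $\xi+\pi^*H$ is the second extremal ray rather than something only nef and non-extremal. If $\xi+\pi^*H$ were ample instead, the conclusion would only improve, so the argument is robust either way. What must be checked is that $\xi+\pi^*H$ is nef, and global generation of $\cU^\vee$ gives this. The remaining arithmetic of the twists is routine.
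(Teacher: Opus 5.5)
Your proof is correct and follows the paper's argument essentially step for step: the same $\wedge^2$ of the sequence in Proposition \ref{p genus 10 cotangent sequence}, Lemma \ref{l wedge Sym3 rank2}, passage to $F=\P_\Sigma(\cU)$, and Kodaira vanishing using nefness of $\xi+\pi^*H$ from global generation of $\cU(1)\simeq\cU^\vee$; the only cosmetic difference is that you compute $-K_F=2\xi+4\pi^*H$ explicitly, whereas the paper writes $-K_F\sim 2\pi^*H+N$ with $N$ nef via $\omega_{G/B}^{-1}\simeq\cL_{2\rho}$. Your claim that $\xi+\pi^*H$ ``must be'' the other extremal ray does not follow merely from its not being a multiple of $\pi^*H$, but, as you note yourself, only its nefness and $\pi$-ampleness are used, so nothing depends on it.
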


\begin{proof}
Fix an ample Cartier divisor $H$ on $\Sigma$ with $\MO_{\Sigma}(H) \simeq \MO_\Sigma(1)$. 
For every $a \geq 0$, we have $R\pi_*\MO_F(a\xi) = \pi_*\MO_F(a\xi) \simeq \Sym^a\cU$, 
which implies 
\[
H^i(
\Sigma,
(\Sym^a\cU)(m)
)
\simeq
H^i(
F,
\cO_F(a\xi+m\pi^*H)
).
\]
As $\rank \cU = 2$, we get 
$\cU(1)
\simeq
\cU^\vee$. 
In particular, $\cU(1)$ is globally generated, and hence 
$\xi+\pi^*H$ is nef and $\pi$-ample.
Applying $\wedge^2$ to %Taking the second exterior power of
an exact sequence $0
\to
\cO_\Sigma(-1)
\to
\Omega_\Sigma^1
\to
(\Sym^3\cU)(1)
\to
0$ (Proposition \ref{p genus 10 cotangent sequence}), 
we obtain
\[
0
\to
\Sym^3\cU
\to
\Omega_\Sigma^2
\to
(\wedge^2
(\Sym^3\cU)
)(2)
\to
0.
\]
As $p>3$, there is the following exact sequence 
(Lemma \ref{l wedge Sym3 rank2}): 
\[
0
\to
(\det\cU)^{\otimes 3}
\to
\wedge^2(\Sym^3\cU)
\to
\Sym^4\cU\otimes\det\cU
\to
0.
\]
Hence it suffices to prove that 
\[
H^{>0}(\Sigma, \MO_\Sigma( m-1)) =0 \qquad \text{and}
\]
\[
H^{>0}(F, \MO_F(3 \xi + m\pi^*H))= 
H^{>0}(F, \MO_F(4 \xi + (m+1)\pi^*H))= 0
\]
for every $m \geq 2$. % a\in \{3, 4\}$, and $n \geq 1$. 
The former one follows from the fact that $\MO_\Sigma( m-1)$ is nef. 
The latter one holds because 
the following divisor is ample for each $b  \in \{0, 1\}$ 
(Proposition \ref{p flag Kodaira}): 
\[
(3+b)\xi +(m+b)\pi^*H  -K_F 
\sim (3+b) (\xi +\pi^*H) + (m-1)\pi^*H + N, 
\]
where 
$N$ is a nef divisor on $F$ satisfying $-K_F \sim 2\pi^*H +N$. Here such a linear equivalence 
$-K_F \sim 2\pi^*H +N$ is ensured by the formula 
$\omega_{G/B}^{-1} \sim \cL_{2 \rho}$ for the sum $\rho$ of all the fundamental weights. 
\end{proof}

\begin{thm}\label{t genus 10 F-split v2}
Assume $p>3$. 
Let $X$ be a prime Fano threefold of genus $10$. 
Then the following hold: 
\begin{enumerate}
\item $H^1(X, \Omega_X^2(m))=0$ for every $m \geq 4$. 
\item $X$ is $F$-split. 
\end{enumerate}
\end{thm}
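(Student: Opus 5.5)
The plan follows the pattern of the genus $7$ and $9$ cases. Part (2) follows from (1): since $p>3$ gives $p\geq 5\geq 4$, we get $H^1(X,\Omega_X^2(p))=0$, and Proposition \ref{p quasi-F-split quadrics}(3) then shows that $X$ is $F$-split. So the work is in (1).

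By \cite[Theorem 1.2]{KTprime}, $X$ is a codimension-$2$ linear section of $\Sigma=G/P\subset\P^{13}$. Hence $\cN^\vee_{X/\Sigma}\simeq\MO_X(-1)^{\oplus 2}$. Applying $\wedge^2$ to the conormal sequence gives exact sequences
\[
0\to\cF\to\Omega^2_\Sigma|_X\to\Omega^2_X\to0,\qquad 0\to\MO_X(-2)\to\cF\to\Omega^1_X(-1)^{\oplus 2}\to 0 .
\]
Fix $m\geq 4$. We have $H^2(X,\MO_X(m-2))=0$ by Theorem \ref{t Fano3 ANV}, and $H^2(X,\Omega^1_X(m-1))=0$ by Proposition \ref{p quasi-F-split quadrics}(1), since $m-1\geq 3\geq 2$. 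Therefore $H^2(X,\cF(m))=0$, and it suffices to prove $H^1(X,\Omega^2_\Sigma(m)|_X)=0$.

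Next, twist the Koszul resolution
\[
0\to\MO_\Sigma(-2)\to\MO_\Sigma(-1)^{\oplus 2}\to\MO_\Sigma\to\MO_X\to0
\]
by $\Omega^2_\Sigma(m)$ and split it into short exact sequences. This reduces the claim to three vanishings:
\[
H^1(\Sigma,\Omega^2_\Sigma(m))=0,\qquad H^2(\Sigma,\Omega^2_\Sigma(m-1))=0,\qquad H^3(\Sigma,\Omega^2_\Sigma(m-2))=0.
\]
Because $m\geq 4$, all three twists $m$, $m-1$, $m-2$ are at least $2$. Each vanishing is therefore an instance of Proposition \ref{p genus 10 Omega2 vanishing v2}, which requires $p>3$.

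No step is a real obstacle: the hard input, the vanishing on $\Sigma$, is already Proposition \ref{p genus 10 Omega2 vanishing v2}. The threshold $m\geq 4$ is chosen exactly so that the last twist $m-2$ stays $\geq 2$, avoiding twists $0$ and $1$, where that proposition gives no information. The only points needing care are identifying the codimension ($\dim\Sigma=5$, so a threefold section has codimension $2$) and the twist bookkeeping in the Koszul reduction.
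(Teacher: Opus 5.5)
Your proposal is correct and follows the paper's proof essentially verbatim. Both arguments take $\wedge^2$ of the conormal sequence, use $H^2(X,\MO_X(m-2))=H^2(X,\Omega^1_X(m-1))=0$, and apply the codimension-two Koszul reduction to Proposition~\ref{p genus 10 Omega2 vanishing v2} with twists $m-\ell\geq 2$. The only cosmetic difference is that the paper reduces to $H^{>0}(X,\Omega^2_\Sigma(m)|_X)=0$, whereas you isolate exactly the three vanishings needed for $H^1$.
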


\begin{proof}
Since (1) implies (2) (Proposition \ref{p quasi-F-split quadrics}),  let us show (1). 
We use Notation \ref{n g=10}. 
We may assume that $X \subset \Sigma$ is a linear section of codimension two \cite[Theorem 1.2]{KTprime}. 
Fix $m \geq 4$. 
Applying $\wedge^2$ to the conormal exact sequence 
$0
\to
\MO_X(-1)^{\oplus 2}
\to
\Omega_\Sigma^1|_X
\to
\Omega_X^1
\to
0,$ we get the following exact sequences for some vector bundle $\cF$:
\[
0
\to
\cF
\to
\Omega_\Sigma^2|_X
\to
\Omega_X^2
\to
0, \quad 
0
\to
\MO_X(-2) 
\to
\cF
\to
\Omega_X^1(-1)^{\oplus 2}
\to
0.
\]
Since $H^{2}(X, \MO_X(m-2))= H^{2}(X, \Omega_X^1(m-1))=0$  (Theorem \ref{t Fano3 ANV}, Proposition \ref{p quasi-F-split quadrics}), 
it is enough to prove that $H^{>0}(X, \Omega^2_\Sigma(m)|_X)=0$. 
Since $X \subset \Sigma$ is a linear section of codimension two, 
the corresponding Koszul exact sequence is given as follows: 
%The Koszul resolution of the codimension-$2$ linear section$X\subset \Sigma$ is
\[
0
\to
\cO_\Sigma(-2)
\to
\cO_\Sigma(-1)^{\oplus2}
\to
\cO_\Sigma
\to
\cO_X
\to
0.
\]
Then the problem is reduced to showing 
$H^{>0}(\Sigma, \Omega^2_\Sigma(m-\ell)) =0$ for $\ell \in \{0, 1, 2\}$, 
which follows from 
$m -\ell \geq 2$ and Proposition \ref{p genus 10 Omega2 vanishing v2}. 
\end{proof}

\section{Genus 12}

\subsection{$H^{>0}(\Gr(r, n), 
(\wedge^a \cQ)^{\otimes k} \otimes 
 ((\wedge^{b} \cU)(1))^{\otimes \ell} \otimes \MO_{\Gr(r, n)}(t-1)) =0$}

\begin{nota}\label{n g=12 general}
For $1\leq r<n$, we set $s :=n-r$. 
We have the following cartesian diagram: 
\[
\begin{tikzcd}[column sep=-1.0em, row sep=1.5em]
& F:=\Fl(1,2,\ldots,n-1;n)
  \arrow[dl, "p_{\cU}"']
  \arrow[dr, "p_{\cQ}"]
  \arrow[dd, "\pi"]& \\
E_{\cU}:=\Fl(1,2,\ldots,r;n)
  \arrow[dr, "\pi_{\cU}"'] & &
E_{\cQ}:=\Fl(r,r+1,\ldots,n-1;n)
  \arrow[dl, "\pi_{\cQ}"] \\
& Y:=\Gr(r,n) &
\end{tikzcd}
\]
Let $\cU$ (resp.\ $\cQ$) be the universal subbundle
(resp.\ quotient bundle) on $Y$. 
\end{nota}

\begin{rem}\label{r g=12}
Set $G:=\GL_n$ and
$T:=
\left\{
\diag(t_1,\ldots,t_n)
\mathrel{}\middle|\mathrel{}
t_i\in k^\times
\right\}
\subset G$.
Let $B\subset G$ be the lower triangular Borel subgroup.
Let $B_{\GL_s}$  denote the lower
triangular Borel subgroup of $\GL_s$.
We have $G\supset P\supset Q\supset B\supset T$ for
\[
P:=
\left\{
\begin{pmatrix}
A&O\\
C&D
\end{pmatrix}
\mathrel{}\middle|\mathrel{}
A\in\GL_s,\ D\in\GL_r,\
C\in\Mat_{r\times s}(k)
\right\},
\]
\[
Q:=
\left\{
\begin{pmatrix}
A&O\\
C&D
\end{pmatrix}
\mathrel{}\middle|\mathrel{}
A\in B_{\GL_s},\ D\in\GL_r,\
C\in\Mat_{r\times s}(k)
\right\}
\]
Under the notation of Notation \ref{n g=12 general},
we have
$Y=G/P, E_{\cQ}=G/Q,$ and $F=G/B$. 
Moreover,
$P/Q\simeq\GL_s/B_{\GL_s}$. 
The surjective homomorphism of algebraic groups
\[
q:P\twoheadrightarrow\GL_s,\qquad
\begin{pmatrix}
A&O\\
C&D
\end{pmatrix}
\mapsto A
\]
induces $q|_Q:Q\twoheadrightarrow B_{\GL_s}$.
\end{rem}

\begin{prop}\label{p g=12 good filtQ}
We use Notation \ref{n g=12 general}.
Let $a_1, ..., a_k$ be non-negative integers.
Then there exist a sequence of locally free
$\MO_Y$-subsheaves
\[
(\wedge^{a_1} \cQ) \otimes 
\cdots \otimes (\wedge^{a_k} \cQ) =:\cF_0
\supset \cF_1 \supset \cdots \supset \cF_N =0
\]
and nef line bundles $\cL_0,\ldots,\cL_{N-1}$ on $E_{\cQ}$
such that
$\cF_i/\cF_{i+1}\simeq(\pi_{\cQ})_*\cL_i$
for every $i\in\{0,1,\ldots,N-1\}$.
\end{prop}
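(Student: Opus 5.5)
We will follow the strategy sketched in the introduction for the case $g=12$, translating everything into $P$-modules at the base point $o:=eP\in Y=G/P$ with the notation of Remark \ref{r g=12}.

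The first step identifies the fibre of $\cQ$ at $o$. As a $P$-module, $\cQ_o$ is the pullback along $q:P\twoheadrightarrow \GL_s$ of a $2$-dimensional-free description: the standard (or dual standard, depending on conventions) $s$-dimensional $\GL_s$-module $V$. Indeed, $R_u(P)$ acts trivially on the quotient $k^n/\cU_o$ and the Levi factor acts through $A$. After choosing the sign convention so that $V\simeq \nabla(1,0,\ldots,0)$, we get $\wedge^{a}\cQ_o\simeq q^*\wedge^a V$. Here $\wedge^aV\simeq \nabla(1^a,0^{s-a})=\nabla(\omega_a)$ is the induced module of a minuscule weight, hence has a good filtration (and vanishes if $a>s$).

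The second step is the filtration on the fibre. By the tensor product theorem for good filtrations \cite[Corollary 6.3]{Mat00}, the $\GL_s$-module $M:=\wedge^{a_1}V\otimes\cdots\otimes\wedge^{a_k}V$ has a filtration $M=F_0\supset\cdots\supset F_N=0$ by $\GL_s$-submodules with $F_i/F_{i+1}\simeq\nabla(\lambda_i)$ and each $\lambda_i$ dominant. Since $M$ is a polynomial representation, all its weights have non-negative entries, and therefore $\lambda_{i,1}\geq\cdots\geq\lambda_{i,s}\geq0$. Pulling back by $q$ makes each $F_i$ a $P$-submodule, on which $R_u(P)$ acts trivially. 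Applying the exact functor $\cE(-)=G\times^P(-)$ then gives locally free subsheaves $\cF_i$ with $\cF_0=\bigotimes_j\wedge^{a_j}\cQ$ and $\cF_i/\cF_{i+1}\simeq\cE(q^*\nabla(\lambda_i))$.

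The third step identifies the quotients. Let $\lambda_i'\in X(Q)$ be the pullback of $\lambda_i\in X(B_{\GL_s})$ via $q|_Q$, and set $\cL_i:=\cL_{\lambda_i'}$ on $E_\cQ=G/Q$. The fibre of $\pi_\cQ$ over $o$ is $P/Q\simeq\GL_s/B_{\GL_s}$, so base change gives a $P$-module isomorphism
\[
(\pi_{\cQ*}\cL_i)_o\simeq H^0(P/Q,\cL_{\lambda_i'}|_{P/Q})\simeq H^0(\GL_s/B_{\GL_s},\cL_{\lambda_i})=\nabla(\lambda_i).
\]
This isomorphism is $P$-equivariant because $\lambda_i'$ factors through $q$, so the kernel of $q$ acts trivially on both sides. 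Hence $\pi_{\cQ*}\cL_i\simeq\cE(q^*\nabla(\lambda_i))\simeq\cF_i/\cF_{i+1}$.

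The last step is nefness of $\cL_i$. Write $E_\cQ=\Fl(r,\ldots,n-1;n)$. The Picard group of $E_\cQ$ is generated by pullbacks of $\MO(1)$ from $\Gr(j,n)$ for $j=r,\ldots,n-1$, and a line bundle is nef iff its coefficients are $\geq0$. In the $\GL_n$ coordinates, $\lambda_i'=(\lambda_{i,1},\ldots,\lambda_{i,s},0,\ldots,0)$. Expanded in the fundamental weights $\epsilon_1+\cdots+\epsilon_j$ for $j\leq s$, its coefficients are the differences $\lambda_{i,j}-\lambda_{i,j+1}\geq0$ and the last one $\lambda_{i,s}\geq0$. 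These are precisely the fundamental weights that descend to $G/Q$ under the paper's sign convention (cf.\ Notation \ref{n weight lattice on GL5}(3)), so $\cL_i$ is nef.

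The main obstacle is bookkeeping of conventions. One must check that $\cQ_o$ is $\nabla(1,0,\ldots,0)$ rather than its dual for the lower triangular Borel subgroup. One must also check that the weights trivial on $D$ match the nef cone of $G/Q$. The inequality $\lambda_{i,s}\geq0$, coming from polynomiality, is exactly what is needed for the last fundamental-weight coefficient to be non-negative. The case where some $a_j>s$ is trivial, since then $\cF_0=0$.
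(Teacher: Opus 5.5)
Your proposal is correct and follows the paper's proof essentially step for step. You identify $\cQ_o$ with the standard $\GL_s$-module pulled back along $q$, use the tensor product theorem \cite[Corollary 6.3]{Mat00} and polynomiality to get a good filtration whose highest weights satisfy $\lambda_1\geq\cdots\geq\lambda_s\geq 0$, identify each graded piece with $(\pi_{\cQ})_*\cL_{\lambda'}$ via $H^0(P/Q,\cL_{\lambda'}|_{P/Q})\simeq\nabla(\lambda)$, and deduce nefness from $\sum_i\lambda_i\epsilon_i=\sum_{i<s}(\lambda_i-\lambda_{i+1})\omega_i+\lambda_s\omega_s$, exactly as the paper does (your remark that $\ker q$ acts trivially on both sides makes explicit the $P$-equivariance the paper leaves implicit).
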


\begin{proof}
If $a_i>s$ for some $i$, then there is nothing to show. 
In what follows, we assume $a_i \leq s$ for every $i$ and use notation introduced in Remark \ref{r g=12}.
For $o:=eP\in G/P$,
the $G$-equivariant vector bundle $\cQ$ on $G/P$
corresponds to the $P$-module $\cQ_o$, where
$\dim_k\cQ_o=s$.
In particular, we have the induced surjective homomorphism
$q:P\to\GL_s$ of algebraic groups
(Remark \ref{r g=12}).
Note that the $P$-module structure on $\cQ_o$ is given by
this group homomorphism and the standard $\GL_s$-module
on $\cQ_o(=k^s)$.
For a dominant weight $\lambda$ of $\GL_s$ and
the lower triangular Borel subgroup $B_{\GL_s}$ of
$\GL_s$, we set
\[
\nabla(\lambda):=
H^0(\GL_s/B_{\GL_s},\cL_\lambda),
\]
which is a $\GL_s$-module.
Then $\wedge^a \cQ_o\simeq \nabla(1,..., 1, 0,\ldots,0)$, 
where the number of entries equal to $1$ is $a$. 

We say that a $\GL_s$-module $V$ has a
{\em good filtration} if there exists a sequence
$V=:F_0\supset F_1\supset\cdots\supset F_N=0$
of $\GL_s$-submodules such that
$F_i/F_{i+1}\simeq\nabla(\lambda_i)$ for every $i$,
where each $\lambda_i$ is a dominant weight of $\GL_s$.
In particular, $\wedge^{a_i} \cQ_o$ has a good filtration.
Tensor products of $\GL_s$-modules with good
filtrations again have good filtrations
\cite[Corollary 6.3]{Mat00}.
Consequently, the $\GL_s$-module
$(\wedge^{a_1} \cQ_o) \otimes 
\cdots \otimes (\wedge^{a_k} \cQ_o) $ corresponding to
$(\wedge^{a_1} \cQ) \otimes 
\cdots \otimes (\wedge^{a_k} \cQ)$ has a good filtration.
As $(\wedge^{a_1} \cQ_o) \otimes 
\cdots \otimes (\wedge^{a_k} \cQ_o) $ is a polynomial
$\GL_s$-module \cite[Subsection 2.6]{Gre07},
any weight $\mu=(\mu_1,\ldots,\mu_s)$ of
$(\wedge^{a_1} \cQ_o) \otimes 
\cdots \otimes (\wedge^{a_k} \cQ_o) $ satisfies
$\mu_i\geq0$ for every $i$ \cite[(3.2c)]{Gre07}.
Then this property also holds for the filtration quotients,
and hence their highest weights
$\lambda=(\lambda_1,\ldots,\lambda_s)$ satisfy
$\lambda_1\geq\cdots\geq\lambda_s\geq0$.

For the pullback
$\lambda'\in X(Q)$ of
$\lambda\in X(B_{\GL_s})$ by
$q|_Q:Q\to B_{\GL_s}$, we have the following
$P$-module isomorphisms:
\[
\bigl((\pi_{\cQ})_*\cL_{\lambda'}\bigr)_o
\simeq
H^0(P/Q,\cL_{\lambda'}|_{P/Q})
\simeq
H^0(\GL_s/B_{\GL_s},\cL_\lambda)
=
\nabla(\lambda),
\]
where $\nabla(\lambda)$ is regarded as a $P$-module
via the projection $P\to\GL_s$.
Corresponding to the good filtration
$(\wedge^{a_1} \cQ_o) \otimes 
\cdots \otimes (\wedge^{a_k} \cQ_o)=:F_0\supset F_1
\supset\cdots\supset F_N=0$,
we get a sequence of locally free $\MO_Y$-submodules
$(\wedge^{a_1} \cQ) \otimes 
\cdots \otimes (\wedge^{a_k} \cQ)=:\cF_0\supset\cF_1
\supset\cdots\supset\cF_N=0$
whose graded quotients are of the form
$(\pi_{\cQ})_*\cL_{\lambda'}$.

Thus it is enough to show that each $\cL_{\lambda'}$
is nef, which follows from
$\lambda_1\geq\cdots\geq\lambda_s\geq0$, 
$\cL_{\lambda'}
\simeq
\cL_{\sum_{i=1}^{s}\lambda_i\epsilon_i}$, and 
\[
\sum_{i=1}^{s}\lambda_i\epsilon_i =
\sum_{i=1}^{s-1}(\lambda_i-\lambda_{i+1})\omega_i
+\lambda_s\omega_s,
\]
where $\epsilon_i\in X(T)$ is defined by
$\epsilon_i(\diag(t_1,\ldots,t_n))=t_i$
and $\omega_i:=\epsilon_1+\cdots+\epsilon_i$
for every $i\in\{1,\ldots,s\}$.
Recall that the line bundle $\cL_{\omega_i}$ on
$E_{\cQ}$ is nef for every $i\in\{1,\ldots,s\}$.
\end{proof}

\begin{prop}\label{p g=12 good filtU}
We use Notation \ref{n g=12 general}.
Let $b_1, ..., b_{\ell}$ be non-negative integers.
Then there exist a sequence of locally free
$\MO_Y$-subsheaves
\[
( (\wedge^{b_1} \cU)(1)) \otimes 
\cdots \otimes ((\wedge^{b_\ell} \cU)(1)) =:\cF_0
\supset \cF_1 \supset \cdots \supset \cF_N =0
\]
and nef line bundles $\cL_0,\ldots,\cL_{N-1}$ on $E_{\cU}$
such that
$\cF_i/\cF_{i+1}\simeq(\pi_{\cU})_*\cL_i$
for every $i\in\{0,1,\ldots,N-1\}$.
\end{prop}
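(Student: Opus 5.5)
Mirror the proof of Proposition \ref{p g=12 good filtQ}, now for the Levi factor $\GL_r$ acting on $\cU_o$. Concretely, let $P'$ be the parabolic with $Y=G/P'$ and $E_{\cU}=G/Q'$, where $Q'\subset P'$ corresponds to a Borel subgroup of the $\GL_r$-factor. (With the block convention of Remark \ref{r g=12}, $\cU_o$ is the $D$-block $\GL_r$-module and $\cQ_o$ the $A$-block one; I would relabel so that $E_{\cU}=\Fl(1,\ldots,r;n)=G/Q'$ and $P'/Q'\simeq \GL_r/B_{\GL_r}$, via the projection $q':P\to\GL_r$, $\bigl(\begin{smallmatrix}A&O\\C&D\end{smallmatrix}\bigr)\mapsto D$.) If some $b_i>r$, the sheaf is zero and there is nothing to prove, so assume $b_i\le r$. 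Then $\cU_o$ is the standard $\GL_r$-module pulled back along $q'$. Since $\cO_Y(1)=\det\cU^\vee$, the twist $(\wedge^{b}\cU)(1)$ corresponds to $\wedge^b U\otimes\det U^\vee\simeq\wedge^{r-b}U^\vee$ for $U:=\cU_o$. Hence
\[
(\wedge^{b}\cU_o)\otimes\det\cU_o^{\vee}\simeq \nabla(0,\ldots,0,-1,\ldots,-1),
\]
with $r-b$ entries equal to $-1$.

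Next, $\wedge^{r-b}U^\vee$ has a good filtration, being an induced module $\nabla$ of a minuscule-type dominant weight. By \cite[Corollary 6.3]{Mat00}, the tensor product $\bigotimes_i \wedge^{r-b_i}U^\vee$ therefore has a good filtration with quotients $\nabla(\lambda)$. This module is the dual of a polynomial module, so all of its weights have entries $\le 0$. Consequently each highest weight satisfies $0\ge\lambda_1\ge\cdots\ge\lambda_r$. As in the earlier proof, $\bigl((\pi_{\cU})_*\cL_{\lambda'}\bigr)_o\simeq H^0(P'/Q',\cL_{\lambda'})\simeq\nabla(\lambda)$ as $P'$-modules. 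So the good filtration induces the required filtration of locally free subsheaves with graded pieces $(\pi_{\cU})_*\cL_{\lambda'}$.

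The main obstacle is checking that each $\cL_{\lambda'}$ is nef on $E_{\cU}$ given only $0\ge\lambda_1\ge\cdots\ge\lambda_r$, i.e.\ rewriting $\lambda'$ as a nonnegative combination of the fundamental weights that are nef on $E_{\cU}$. The $\GL_r$-factor occupies coordinates $s+1,\ldots,n$, and the nef cone of $E_{\cU}$ is generated by $\omega_s,\omega_{s+1},\ldots,\omega_{n-1}$ together with $\pm\omega_n=\pm\det$. I would write
\[
\sum_{j=1}^r\lambda_j\epsilon_{s+j}=\sum_{j=1}^{r-1}(\lambda_j-\lambda_{j+1})(\omega_{s+j}-\omega_s)+\lambda_r(\omega_n-\omega_s)-\lambda_1\cdot(\ldots).
\]
The cleaner route is to use $\omega_n$ trivial, which holds for line bundles up to twisting by $\det$ (the trivial bundle on $G/Q'$). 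Then $-\sum_j\lambda_j\epsilon_{s+j}$ can be expressed using $\epsilon_{s+j}=\omega_{s+j}-\omega_{s+j-1}$. This gives
\[
\sum_j\lambda_j\epsilon_{s+j}=\sum_{j}(\lambda_j-\lambda_{j+1})\omega_{s+j}-\lambda_1\omega_s+(\text{multiple of }\omega_n),
\]
whose coefficients $\lambda_j-\lambda_{j+1}\ge0$ and $-\lambda_1\ge0$ are all nonnegative. Thus $\cL_{\lambda'}$ is nef.

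Care is needed to match the sign conventions (which block carries $\cU$, and which $\omega_i$ are nef on $E_{\cU}$), but the structure is identical to Proposition \ref{p g=12 good filtQ}.
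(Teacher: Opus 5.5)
Your argument is correct, but it is organised differently from the paper's. The paper does not redo the representation theory for $\cU$. It uses the canonical isomorphism $\iota\colon Y=\Gr(r,n)\xrightarrow{\simeq}\Gr(s,(k^n)^\vee)$, $[W]\mapsto[W^\perp]$. Under $\iota$ one has $\iota^*\cQ'\simeq\cU^\vee$, and hence $\iota^*\wedge^{r-b_j}\cQ'\simeq(\wedge^{b_j}\cU)(1)$. The paper then notes that the flag-variety diagram over $\Gr(s,(k^n)^\vee)$ is the one in Notation \ref{n g=12 general} with the roles of $E_{\cU}$ and $E_{\cQ}$ exchanged, and simply applies Proposition \ref{p g=12 good filtQ}.

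You instead work directly with the $D$-block Levi factor $\GL_r$ and $Q'=q'^{-1}(B_{\GL_r})$. You identify $(\wedge^b\cU_o)\otimes\det\cU_o^\vee\simeq\wedge^{r-b}U^\vee\simeq\nabla(0,\ldots,0,-1,\ldots,-1)$ and invoke Mathieu's theorem. Since the tensor product is dual to a polynomial module, the highest weights satisfy $0\geq\lambda_1\geq\cdots\geq\lambda_r$.

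Your final identity $\sum_j\lambda_j\epsilon_{s+j}=-\lambda_1\omega_s+\sum_{j=1}^{r-1}(\lambda_j-\lambda_{j+1})\omega_{s+j}+\lambda_r\omega_n$ is right. The conventions you worried about do match. With the lower triangular Borel, $\cL_\mu$ on $G/Q'$ is nef iff $\mu$ is $\GL_n$-dominant. Also $\cL_{\omega_n}$ is trivial because $\det$ extends to a character of $G$. So the nefness check goes through, and your first, garbled display can simply be dropped.

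The duality route buys brevity and reuses the polynomial-module bookkeeping of the $\cQ$-case verbatim. Your direct route makes explicit which weights occur. Nefness then comes from the coefficient $-\lambda_1\geq 0$ on $\omega_s$, where the $\cQ$-case uses $\lambda_s\geq0$. It also avoids having to check that duality identifies the two flag diagrams compatibly with $\pi_{\cU}$ and $\pi_{\cQ}$.
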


\begin{proof}
If $b_j>r$ for some $j$, then there is nothing to show. 
In what follows, we assume $b_j\leq r$ for every $j$.
Set $Y':=\Gr(s,(k^n)^\vee)$ and let $\cQ'$ be its universal quotient bundle. 
Then the canonical isomorphism
\[
\iota:Y\xrightarrow{\simeq}Y',\qquad
[W]\mapsto[W^\perp]
\]
satisfies $\iota^*\cQ'\simeq\cU^\vee$. Since
$\MO_Y(1)\simeq(\det\cU)^{-1}$, we have
\[
\iota^*(\wedge^{r-b_j}\cQ') \simeq 
\wedge^{r-b_j} \iota^*\cQ' \simeq \wedge^{r-b_j} \cU^\vee
\simeq (\wedge^{b_j}\cU)(1). 
\]
Note that the following canonical square diagram is the same as the one in Notation \ref{n g=12 general} up to isomorphisms: 
\[
\begin{tikzcd}[column sep=-1.0em, row sep=1.5em]
& \Fl(1,2,\ldots,n-1;(k^n)^\vee)
  \arrow[dl]
  \arrow[dr]
  \arrow[dd] & \\
\Fl(s,s+1,\ldots,n-1;(k^n)^\vee)
  \arrow[dr] & &
\Fl(1,2,\ldots,s;(k^n)^\vee)
  \arrow[dl] \\
& Y':=\Gr(s,(k^n)^\vee). &
\end{tikzcd}
\]
Then the assertion holds by applying Proposition \ref{p g=12 good filtQ} to
$\wedge^{r-b_1}\cQ' \otimes \cdots \otimes \wedge^{r-b_{\ell}} \cQ'$.
\end{proof}

\begin{prop}\label{p g=12 good filt}
We use Notation \ref{n g=12 general}.
Let $a_1, ..., a_k, b_1, ..., b_{\ell}$ be non-negative integers, 
where $k \geq 0$ and $\ell \geq 0$.
Then \[
H^i(Y, 
(\wedge^{a_1} \cQ) \otimes 
\cdots \otimes (\wedge^{a_k} \cQ) \otimes 
 (\wedge^{b_1} \cU)(1) \otimes 
\cdots \otimes (\wedge^{b_\ell} \cU)(1) \otimes \MO_Y(t-1)) =0
\]
for every $i>0$ and $t \geq 0$. 
\end{prop}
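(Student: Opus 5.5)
We combine the two filtrations of Proposition \ref{p g=12 good filtQ} and Proposition \ref{p g=12 good filtU}. Set $\cA:=(\wedge^{a_1}\cQ)\otimes\cdots\otimes(\wedge^{a_k}\cQ)$ and $\cB:=(\wedge^{b_1}\cU)(1)\otimes\cdots\otimes(\wedge^{b_\ell}\cU)(1)$. The first proposition gives a filtration of $\cA$ with graded pieces $(\pi_{\cQ})_*\cL_i$, $\cL_i$ nef on $E_{\cQ}$. The second gives a filtration of $\cB$ with graded pieces $(\pi_{\cU})_*\cM_j$, $\cM_j$ nef on $E_{\cU}$. Tensoring the two filtrations, which are by subbundles and hence flat, yields a filtration of $\cA\otimes\cB\otimes\MO_Y(t-1)$ whose graded pieces are
\[
(\pi_{\cQ})_*\cL_i\otimes(\pi_{\cU})_*\cM_j\otimes\MO_Y(t-1).
\]
By the long exact sequences in cohomology, it suffices to show that each such piece has vanishing $H^{>0}$.

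Next we identify each piece with a pushforward of a line bundle from $F$. The square in Notation \ref{n g=12 general} is cartesian: $F\simeq E_{\cU}\times_Y E_{\cQ}$, with both $\pi_{\cU}$ and $\pi_{\cQ}$ smooth and flat. Since $\cL_i$ and $\cM_j$ are nef, and nef line bundles on flag varieties are relatively nef, Proposition \ref{p flag Kodaira} gives $R\pi_{\cQ*}\cL_i=\pi_{\cQ*}\cL_i$ and $R\pi_{\cU*}\cM_j=\pi_{\cU*}\cM_j$. Flat base change and the Künneth/projection formula then give
\[
R\pi_*\bigl(p_{\cQ}^*\cL_i\otimes p_{\cU}^*\cM_j\bigr)\simeq(\pi_{\cQ})_*\cL_i\otimes(\pi_{\cU})_*\cM_j .
\]
Tensoring with $\MO_Y(t-1)$, we obtain
\[
H^{i'}\bigl(Y,\text{piece}\bigr)\simeq H^{i'}\bigl(F,\,N\otimes\pi^*\MO_Y(t-1)\bigr),
\qquad N:=p_{\cQ}^*\cL_i\otimes p_{\cU}^*\cM_j ,
\]
where $N$ is nef.

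It remains to prove $H^{>0}(F,N+(t-1)\pi^*H)=0$ for $t\geq0$, where $H$ is the Plücker divisor. For $t\geq1$ the bundle is nef, so the vanishing follows from Proposition \ref{p flag Kodaira} applied with $Y=\Spec k$. The main obstacle is $t=0$, where the twist is $\pi^*\MO_Y(-1)$. Here we use the criterion that $L-K_F$ be ample, and we need $-K_F-\pi^*H$ to be ample on $F$. Since $-K_F\sim\cL_{2\rho}=\sum_i 2\omega_i$ and $\pi^*H$ is a single fundamental weight $\omega_s$ (up to the convention in the paper), we have $-K_F-\pi^*H\sim\sum_{i\neq s}2\omega_i+\omega_s$, which is ample. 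Adding the nef divisor $N$ keeps it ample, so Proposition \ref{p flag Kodaira} applies. The points to check carefully are the base-change identification on the fibre product and that the twist $(1)$ built into $\cB$ has been correctly absorbed into the nef $\cM_j$.
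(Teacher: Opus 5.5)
Your proof is correct and follows the paper's argument essentially step for step. You filter $\cA\otimes\cB$ using Propositions \ref{p g=12 good filtQ} and \ref{p g=12 good filtU}, identify each graded piece with $\pi_*$ of the nef line bundle $p_{\cQ}^*\cL\otimes p_{\cU}^*\cM$ on $F$ via flat base change and the projection formula, and conclude by Proposition \ref{p flag Kodaira}. The only cosmetic difference is that you split into the cases $t\geq1$ and $t=0$, whereas the paper checks in one step, using $-K_F\sim\cL_{2\rho}$, that $\cN\otimes\pi^*\MO_Y(t-1)\otimes\MO_F(-K_F)$ is ample for all $t\geq0$.
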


\begin{proof}
Set
\[
\cA:=
(\wedge^{a_1}\cQ)\otimes\cdots\otimes(\wedge^{a_k}\cQ),
\qquad
\cB:=
((\wedge^{b_1}\cU)(1))\otimes\cdots\otimes
((\wedge^{b_\ell}\cU)(1)).
\]
By Propositions \ref{p g=12 good filtQ} and
\ref{p g=12 good filtU}, the bundles $\cA$ and $\cB$
have filtrations whose graded quotients are respectively of
the form $(\pi_{\cQ})_*\cL$ and $(\pi_{\cU})_*\cM$,
where $\cL$ and $\cM$ are nef line bundles on
$E_{\cQ}$ and $E_{\cU}$.
Thus $\cA\otimes\cB$ has a filtration whose graded
quotients are of the form
$(\pi_{\cQ})_*\cL\otimes(\pi_{\cU})_*\cM$. 
Hence it is enough to prove 
\[
H^{>0}(Y, (\pi_{\cQ})_*\cL\otimes(\pi_{\cU})_*\cM (t-1))=0. 
\]

For the nef line bundle $\cN:=p_{\cQ}^*\cL\otimes p_{\cU}^*\cM$ on $F$, we have $R^{>0}\pi_*\cN =0$ and 
\[
\begin{aligned}
\pi_*\cN
&=
(\pi_{\cQ})_*(p_{\cQ})_*
(p_{\cQ}^*\cL\otimes p_{\cU}^*\cM
)\\
&\simeq 
(\pi_{\cQ})_*
(\cL\otimes (p_{\cQ})_*p_{\cU}^*\cM
)\\
&\simeq 
(\pi_{\cQ})_*
(\cL\otimes \pi_{\cQ}^*(\pi_{\cU})_*\cM
)\\
&\simeq(\pi_{\cQ})_*\cL
\otimes(\pi_{\cU})_*\cM.
\end{aligned}
\]
Therefore, we obtain 
\[
H^{>0}(Y, (\pi_{\cQ})_*\cL\otimes(\pi_{\cU})_*\cM (t-1)) 
\simeq H^{>0}(F, \cN \otimes \pi^*\MO_Y(t-1))\overset{(\star)}{=}0, 
\]
where $(\star)$ 
follows from Proposition \ref{p flag Kodaira}, 
which is applicable because 
$\cN \otimes \pi^*\MO_Y(t-1) \otimes \MO_F(-K_F)$ is ample. 
Here this ampleness is ensured by the formula 
$\MO_F(-K_F) \sim \cL_{2\rho}$.
\end{proof}

\subsection{F-splitting for genus 12}

\begin{nota}\label{n g=12}
Assume $p>2$. 
For $E:=\Fl(4,5,6;7)$ and 
$Y := \Gr(4, 7)$, 
let 
\[
\pi:E=\Fl(4,5,6;7)\to 
Y =\Gr(4, 7) 
\]
be the natural contraction. 
Let $\cU$ (resp.\ $\cQ$) be the universal subbundle (resp.\ quotient bundle) on $Y= \Gr(4, 7)$. 
\end{nota}

\begin{lem}\label{l genus 12 v6 polynomial vanishing}
We use Notation \ref{n g=12}. 
Let $\nu$ be a non-negative integer. 
Then 
\[
H^i(Y,\cQ^{\otimes \nu}(t))=0
\]
if one of the following holds:
\begin{enumerate}
\item $t \geq -4$ and $i>0$. 
\item $t=-5$ and $i >4$. 
\end{enumerate}
\end{lem}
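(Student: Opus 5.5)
The plan is to use the good filtration of Proposition \ref{p g=12 good filtQ} to reduce the statement to line bundles on $E=\Fl(4,5,6;7)$, and then apply Kodaira-type vanishing on flag varieties.

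\emph{Reduction.} Take $a_1=\cdots=a_\nu=1$ in Proposition \ref{p g=12 good filtQ} (here $r=4$, $s=3$, $E_{\cQ}=E$). This gives a filtration of $\cQ^{\otimes\nu}$ whose graded pieces are $\pi_*\cL_{\lambda'}$. Each $\lambda=(\lambda_1,\lambda_2,\lambda_3)$ satisfies $\lambda_1\geq\lambda_2\geq\lambda_3\geq0$, and
\[
\cL_{\lambda'}\simeq\cL_{(\lambda_1-\lambda_2)\omega_1+(\lambda_2-\lambda_3)\omega_2+\lambda_3\omega_3}.
\]
By the long exact sequences, it suffices to prove the vanishing for each graded piece twisted by $\MO_Y(t)$. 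Since $\cL_{\lambda'}$ is nef, Proposition \ref{p flag Kodaira} gives $R^{>0}\pi_*\cL_{\lambda'}=0$. Because $\pi^*\MO_Y(1)\simeq\cL_{\omega_3}$, the projection formula yields
\[
H^i(Y,(\pi_*\cL_{\lambda'})(t))\simeq H^i(E,\cL_{\lambda'+t\omega_3}).
\]

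\emph{Anticanonical class of $E$.} I would compute $-K_E$ as $\sum_{j\in\{1,2,3\}}\langle 2\rho-2\rho_{L_Q},\alpha_j^\vee\rangle\,\omega_j$, where $L_Q$ is the Levi subgroup of $Q$ (a torus times $\GL_4$). For $j=1,2$ the simple roots are orthogonal to the $\GL_4$-block, so the coefficient is $2$. For $j=3$ one has $\langle 2\rho_{L_Q},\alpha_3^\vee\rangle=-3$, so the coefficient is $5$. Hence $-K_E\sim 2\omega_1+2\omega_2+5\omega_3$. As a check, this equals $-K_Y$ (namely $7\omega_3$) plus the relative anticanonical class $2\rho_{\GL_3}=2\omega_1+2\omega_2-2\omega_3$ of $P/Q$. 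It follows that
\[
\cL_{\lambda'+t\omega_3}\otimes\MO_E(-K_E)\simeq\cL_{(\lambda_1-\lambda_2+2)\omega_1+(\lambda_2-\lambda_3+2)\omega_2+(\lambda_3+t+5)\omega_3}.
\]

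\emph{Case (1).} For $t\geq-4$, all three coefficients above are positive because $\lambda_3\geq0$. So the line bundle is ample, and Proposition \ref{p flag Kodaira} gives $H^{>0}=0$.

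\emph{Case (2).} Now take $t=-5$. If $\lambda_3\geq1$, the same ampleness argument applies. If $\lambda_3=0$, then $\cL_{\lambda'-5\omega_3}=K_E+M$, where $M$ has positive coefficients on $\omega_1,\omega_2$ only. Such an $M$ is the pullback of an ample divisor under the contraction $E=G/Q\to G/Q'$ that forgets the $4$-dimensional subspace, i.e.\ $E\to\Fl(5,6;7)$, whose fibres are $\P^4$. Proposition \ref{p almost ample general} with $d=4$ then gives $H^i=0$ for all $i\neq4$, in particular for $i>4$.

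The main obstacle I expect is the correct bookkeeping of $-K_E$ in the paper's weight conventions: which $\omega_j$ corresponds to $\MO_Y(1)$, and the coefficient $5$. The thresholds $-4$ and $-5$ hinge exactly on that coefficient, so I would double-check it via the fibration $E\to Y$ as above.
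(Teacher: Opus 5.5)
Your proposal is correct and follows the paper's proof. Both arguments reduce via Proposition~\ref{p g=12 good filtQ} to nef line bundles on $E$, use $-K_E\sim 2\omega_1+2\omega_2+5\omega_3$, and then apply Proposition~\ref{p flag Kodaira} for $t\geq -4$ and Proposition~\ref{p almost ample general} via $E\to\Fl(5,6;7)$ (fibres $\P^4$) for $t=-5$. Your case split on $\lambda_3$ and your verification of the coefficient $5$ simply make explicit what the paper leaves implicit.
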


\begin{proof}
By Proposition \ref{p g=12 good filtQ}, 
it is enough to prove the assertion after replacing $\cQ^{\otimes \nu}$ by $\pi_*\cL$ for a nef line bundle $\cL$ on $E=\Fl(4,5,6;7)$. 
As $\cL$ is $\pi$-nef, we get $R^{>0}\pi_*\mathcal L=0$. 
Thus the problem is  reduced to 
%the corresponding statement  for
\[
H^i(E,\mathcal L \otimes
\pi^*\mathcal O_Y(t))=0, 
\]
which follows from 
Proposition \ref{p flag Kodaira} and 
Proposition \ref{p almost ample general} because 
$-K_E \sim 2\omega_1 + 2\omega_2 + 5\omega_3$ and 
every fibre of the projection 
$E=\Fl(4,5,6;7) \to \Fl(5,6;7)$ is 
isomorphic to $\Gr(4, 5) \simeq \P^4$.
\end{proof}

\begin{lem}\label{l genus 12 v6 universal kernels}
We use Notation \ref{n g=12}. 
Let $\nu$ be a non-negative integer. 
Then the following hold: 
\begin{enumerate}
\item 
$H^i(Y,\mathcal U\otimes\cQ^{\otimes \nu}(t))=0$
if $t\ge-4$ and $i \geq 2$. 
\item $H^i(Y,\mathcal U\otimes\cQ^{\otimes \nu}(-5))=0$ 
for $i\geq 6$. 
\item 
$H^i(Y,\wedge^2\mathcal U\otimes\cQ^{\otimes \nu}(t))=H^i(Y,\Sym^2\mathcal U
\otimes\cQ^{\otimes \nu}(t))=0$ 
if $t \geq -4$ and $ i\geq 3$. 
\item 
$H^i(Y,\wedge^2\mathcal U\otimes\cQ^{\otimes \nu}(-5)) 
= 
H^i(Y,\Sym^2\mathcal U
\otimes\cQ^{\otimes \nu}(-5))=0$ 
for $i\geq 7$. 
\end{enumerate}
\end{lem}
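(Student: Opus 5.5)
We reduce each assertion to Lemma \ref{l genus 12 v6 polynomial vanishing}, using the tautological sequence $0\to\cU\to\MO_Y^{\oplus 7}\to\cQ\to 0$ on $Y=\Gr(4,7)$ and its exterior and symmetric powers. Tensoring with a vector bundle keeps these sequences exact.

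For (1) and (2), tensor the tautological sequence with $\cQ^{\otimes\nu}(t)$:
\[
0\to\cU\otimes\cQ^{\otimes\nu}(t)\to(\cQ^{\otimes\nu}(t))^{\oplus7}\to\cQ^{\otimes(\nu+1)}(t)\to0 .
\]
The long exact sequence yields a surjection $H^{i-1}(Y,\cQ^{\otimes(\nu+1)}(t))\twoheadrightarrow H^i(Y,\cU\otimes\cQ^{\otimes\nu}(t))$ whenever $H^i(Y,\cQ^{\otimes\nu}(t))=0$.
\begin{itemize}
\item If $t\ge-4$ and $i\ge2$, both $H^{i-1}(\cQ^{\otimes(\nu+1)}(t))$ and $H^i(\cQ^{\otimes\nu}(t))$ vanish by Lemma \ref{l genus 12 v6 polynomial vanishing}(1). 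This gives (1).
\item If $t=-5$ and $i\ge6$, then $i-1\ge5>4$ and $i>4$, so Lemma \ref{l genus 12 v6 polynomial vanishing}(2) kills both groups. This gives (2).
\end{itemize}

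For (3) and (4), we use two resolutions of length two, each obtained by splicing two short exact sequences.
\begin{itemize}
\item The exterior power of the tautological sequence gives the Koszul-type resolution
\[
0\to\wedge^2\cU\to\wedge^2\MO^{\oplus7}\to\MO^{\oplus7}\otimes\cQ\to\Sym^2\cQ\to0 .
\]
\item The symmetric power gives
\[
0\to\Sym^2\cU\to\MO^{\oplus7}\otimes\cU\to\wedge^2\MO^{\oplus7}\to\wedge^2\cQ\to0 .
\]
\end{itemize}
Neither $\Sym^2\cQ$ nor $\wedge^2\cQ$ is a tensor power of $\cQ$. Since $p>2$, however, both are direct summands of $\cQ^{\otimes 2}$. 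Hence $\Sym^2\cQ\otimes\cQ^{\otimes\nu}$ and $\wedge^2\cQ\otimes\cQ^{\otimes\nu}$ are summands of $\cQ^{\otimes(\nu+2)}$, and their cohomology vanishes in the range given by Lemma \ref{l genus 12 v6 polynomial vanishing}. Alternatively, one can invoke Proposition \ref{p g=12 good filtQ} directly for $\wedge^2\cQ\otimes\cQ^{\otimes\nu}$ and rerun the proof of Lemma \ref{l genus 12 v6 polynomial vanishing}.

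After tensoring the two resolutions with $\cQ^{\otimes\nu}(t)$ and splitting each into two short exact sequences, we bound the target groups.
\begin{itemize}
\item For $\wedge^2\cU$, the group $H^i(\wedge^2\cU\otimes\cQ^{\otimes\nu}(t))$ is controlled by $H^{i-1}(\cQ^{\otimes(\nu+1)}(t))$, $H^{i-2}(\cQ^{\otimes(\nu+2)}(t))$ and $H^i(\cQ^{\otimes\nu}(t))$.
\item For $\Sym^2\cU$, the group is controlled by $H^i(\cU\otimes\cQ^{\otimes\nu}(t))$, $H^{i-1}(\cQ^{\otimes\nu}(t))$ and $H^{i-2}(\wedge^2\cQ\otimes\cQ^{\otimes\nu}(t))$.
\end{itemize}
With $t\ge-4$ and $i\ge3$, all indices involved are at least $1$, so everything vanishes by Lemma \ref{l genus 12 v6 polynomial vanishing}(1) and part (1) above; note $i\ge3>2$ as part (1) requires. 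With $t=-5$ and $i\ge7$, the shifted indices are at least $5>4$. The group $H^i(\cU\otimes\cQ^{\otimes\nu}(-5))$ vanishes by part (2) since $i\ge7\ge6$. This proves (3) and (4).

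The main point needing care is the identification of $\wedge^2\cQ$ and $\Sym^2\cQ$ as summands of $\cQ^{\otimes2}$, which uses $p>2$ (Notation \ref{n g=12}). The rest is index bookkeeping, where the thresholds $2,6,3,7$ in the statement match the lengths of the resolutions.
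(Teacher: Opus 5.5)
Your proposal is correct. Parts (1) and (2) follow the paper's argument exactly. For (3) and (4) you take a somewhat different route.

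\textbf{The paper's route.} The paper tensors the tautological sequence once more, this time with $\cU\otimes\cQ^{\otimes\nu}(t)$. This gives $0\to\cU^{\otimes 2}\otimes\cQ^{\otimes\nu}(t)\to(\cU\otimes\cQ^{\otimes\nu}(t))^{\oplus 7}\to\cU\otimes\cQ^{\otimes(\nu+1)}(t)\to 0$. Parts (1) and (2), applied with $\nu$ and $\nu+1$, then kill the cohomology of $\cU^{\otimes 2}\otimes\cQ^{\otimes\nu}(t)$ in degrees $\geq 3$ (resp.\ $\geq 7$ when $t=-5$). Finally $p>2$ splits off $\wedge^2\cU$ and $\Sym^2\cU$ as direct summands of $\cU^{\otimes 2}$.

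\textbf{Your route.} You use the two-step Koszul/Schur complexes resolving $\wedge^2\cU$ and $\Sym^2\cU$. The $p>2$ splitting moves to the $\cQ$-side, namely $\Sym^2\cQ,\wedge^2\cQ\subset\cQ^{\otimes 2}$. Your index bookkeeping checks out in all four cases.

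\textbf{Comparison.} The paper's version is shorter: it handles both bundles at once by iterating the single exact sequence already used for (1) and (2). Yours is more explicit, and for $\wedge^2\cU$ it bypasses part (1) entirely. However, it requires checking exactness of two further complexes.

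\textbf{One small point.} Your second complex also uses $p\neq 2$, not only the summand argument you flagged. In general the kernel of $V\otimes\cU\to\wedge^2 V\otimes\MO_Y$ is the bundle of symmetric tensors $\Gamma^2\cU$. This bundle agrees with $\Sym^2\cU$ only when $2$ is invertible. Since $p>2$ is assumed in Notation \ref{n g=12}, this is harmless, but it should be stated.
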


\begin{proof}
We have the exact sequence 
$0 \to \cU \to \MO_Y^{\oplus 7} \to \cQ \to 0$ of the universal bundles. 
We then get 
the following exact sequence:
\[
0\to\mathcal U\otimes\cQ^{\otimes \nu}(t)
\to \cQ^{\otimes \nu}(t)^{\oplus 7}
\to  \cQ^{\otimes \nu+1}(t)
\to0. 
\]
Hence (1) and (2) follow from 
Lemma \ref{l genus 12 v6 polynomial vanishing}. 
By the exact sequence 
\[
0 \to \cU^{\otimes 2} \otimes \cQ^{\otimes \nu}(t) \to 
( \cU \otimes \cQ^{\otimes \nu}(t))^{\oplus 7} \to \cU \otimes \cQ^{\otimes \nu+1}(t) \to 0, 
\]
(1) (resp.\ (2)) implies (3) (resp.\ (4)) 
as $\wedge^2 \cU$ and $\Sym^2 \cU$ are direct summands 
of $\cU^{\otimes 2}$ in characteristic $p>2$. 
\end{proof}

\begin{lem}\label{l genus 12 qf mixed vanishing}
We use Notation \ref{n g=12}. 
Let $\nu$ be a non-negative integer and let
\[
\cV \in
\left\{
\mathcal U,\quad 
(\Sym^2\mathcal U)(1),\quad 
\wedge^2\mathcal U
\right\}.
\]
Then $H^i(
Y,
\cV\otimes\cQ^{\otimes \nu}(t)
)=0$ for every $i>0$ and every $t\geq 0$.
\end{lem}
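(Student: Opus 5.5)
The plan is to reduce each case to Proposition \ref{p g=12 good filt}, which gives $H^{>0}(Y,\cA\otimes\cB\otimes\MO_Y(t-1))=0$ for $t\geq 0$. Here $\cA$ is a tensor product of exterior powers of $\cQ$, and $\cB$ is a tensor product of bundles $(\wedge^{b}\cU)(1)$. Take $\cA=\cQ^{\otimes\nu}$, i.e.\ all $a_j=1$. It then suffices to write each $\cV\otimes\MO_Y(t)$, with $t\geq0$, as a direct summand of $\cB\otimes\MO_Y(t'-1)$ for some admissible $\cB$ and some $t'\geq 0$.

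\emph{Case $\cV=\cU$.} Write $\cU(t)=((\wedge^1\cU)(1))\otimes\MO_Y(t-1)$. This is $\cB\otimes\MO_Y(t-1)$ with $\ell=1$, $b_1=1$, and $t\geq0$. Proposition \ref{p g=12 good filt} applies directly.

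\emph{Case $\cV=\wedge^2\cU$.} Write $(\wedge^2\cU)(t)=((\wedge^2\cU)(1))\otimes\MO_Y(t-1)$ with $\ell=1$, $b_1=2$. The same proposition gives the vanishing.

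\emph{Case $\cV=(\Sym^2\cU)(1)$.} Since $p>2$, $\Sym^2\cU$ is a direct summand of $\cU^{\otimes2}$. Hence $(\Sym^2\cU)(1)\otimes\MO_Y(t)$ is a direct summand of
\[
\cU^{\otimes 2}(1+t)=((\wedge^1\cU)(1))^{\otimes 2}\otimes\MO_Y(t-1).
\]
This is of the form $\cB\otimes\MO_Y(t-1)$ with $\ell=2$, $b_1=b_2=1$, and $t\geq 0$. Proposition \ref{p g=12 good filt} gives the vanishing of higher cohomology for the big bundle, and therefore for the summand, because cohomology commutes with direct sums.

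There is no real obstacle. The only points to check are the bookkeeping of the twist $(1)$ in Proposition \ref{p g=12 good filt}, since each factor of $\cB$ carries its own twist $\MO_Y(1)$ while the remaining twist is $t-1$, and the use of $p>2$ (Notation \ref{n g=12}) for the splitting $\cU^{\otimes2}\simeq\Sym^2\cU\oplus\wedge^2\cU$.
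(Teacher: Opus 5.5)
Your proposal is correct and matches the paper's proof: every case is reduced to Proposition \ref{p g=12 good filt} with $\cA=\cQ^{\otimes\nu}$, and your bookkeeping of the twists is right. The one minor difference is the case $(\Sym^2\cU)(1)$. You use the $p>2$ splitting of $\cU^{\otimes 2}$, while the paper uses the exact sequence $0\to\wedge^2\cU\to\cU\otimes\cU\to\Sym^2\cU\to0$ together with the $\wedge^2\cU$ case, an argument that would also work without $p>2$.
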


\begin{proof}
The assertion follows from 
Proposition \ref{p g=12 good filt} and the exact sequence 
$0
\to
\wedge^2\cU
\to
\cU\otimes\cU
\to
\Sym^2\cU
\to
0.$ 
\end{proof}

\begin{lem}\label{l genus 12 qf ambient vanishing}
We use Notation \ref{n g=12}. 
Take integers $m\geq 3$ and $0 \leq j \leq 9$. 
Set
\[
\mathcal E_0:=(\wedge^2\mathcal Q)^{\oplus3}
\qquad
\text{and}
\qquad
K_j:=\wedge^j\mathcal E_0^\vee.
\]
%For every integer $m\geq3$ and every $0\leq j\leq9$, 
Then the following hold: 
\begin{enumerate}
\item 
$ 
H^{3+j}(
Y,
\bigl(\Sym^2\mathcal E_0^\vee\bigr)(m)
\otimes K_j
)=0$. 
\item 
$H^{2+j}(
Y,
(\mathcal E_0^\vee\otimes\Omega_Y^1)(m)
\otimes K_j
)=0$. 
\item $
H^{1+j}(
Y,
\Omega_Y^2(m)\otimes K_j
)=0$. 
\end{enumerate}

\end{lem}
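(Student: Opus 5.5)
The plan is to express every bundle in the statement through tensor products of the bundles handled in Proposition \ref{p g=12 good filt} and Lemmas \ref{l genus 12 v6 polynomial vanishing}--\ref{l genus 12 qf mixed vanishing}. Then each cohomology group is controlled either by Kodaira-type vanishing on the flag variety (Proposition \ref{p g=12 good filt}) or by the finer vanishing ranges for negative twists.

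First I rewrite the pieces. On $Y=\Gr(4,7)$ we have $\rank\cQ=3$, so $\wedge^2\cQ\simeq\cQ^\vee(1)$ and hence $\wedge^2\cQ^\vee\simeq\cQ(-1)$. Therefore
\[
\mathcal E_0^\vee\simeq(\cQ(-1))^{\oplus3}.
\]
It follows that $K_j=\wedge^j\mathcal E_0^\vee$ is a direct sum of bundles $\wedge^{c_1}(\cQ(-1))\otimes\wedge^{c_2}(\cQ(-1))\otimes\wedge^{c_3}(\cQ(-1))$ with $c_1+c_2+c_3=j$ and $0\le c_i\le 3$. Each such summand is $(\wedge^{c_1}\cQ\otimes\wedge^{c_2}\cQ\otimes\wedge^{c_3}\cQ)(-j)$. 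Since $p>2$, $\Sym^2\mathcal E_0^\vee$ is a direct summand of $(\mathcal E_0^\vee)^{\otimes2}$, so it is a sum of copies of $\cQ^{\otimes2}(-2)$ up to direct summands. I also use $\Omega_Y^1\simeq\cU\otimes\cQ^\vee\simeq\cU\otimes\wedge^2\cQ(-1)$. Finally, $\Omega^2_Y\simeq\wedge^2\cU\otimes\Sym^2\cQ^\vee\oplus\Sym^2\cU\otimes\wedge^2\cQ^\vee$ for $p>2$, as in Lemma \ref{l genus 6 Grassmannian Omega2}. Both summands are twists of $\wedge^2\cU$ or $\Sym^2\cU$ tensored with polynomial functors of $\cQ$. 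Indeed, $\Sym^2\cQ^\vee$ is a summand of $(\wedge^2\cQ)^{\otimes2}(-2)$, and $\wedge^2\cQ^\vee\simeq\cQ(-1)$.

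After this rewriting, the three groups in the statement become direct summands of the following:
\begin{enumerate}
\item[(1)] $H^{3+j}$ of $\cA(m-2-j)$;
\item[(2)] $H^{2+j}$ of $\cU\otimes\cA(m-2-j)$;
\item[(3)] $H^{1+j}$ of $\wedge^2\cU\otimes\cA(m-2-j)$ or $\Sym^2\cU\otimes\cA(m-1-j)$.
\end{enumerate}
In each case $\cA$ is a tensor product of exterior powers of $\cQ$. I would in fact replace $\cA$ by $\cQ^{\otimes\nu}$, since $\wedge^a\cQ$ is a direct summand of $\cQ^{\otimes a}$ when $a\le 2<p$; for $a=3$, $\wedge^3\cQ=\cO(1)$ only shifts the twist upward. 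The exponents $c_i=3$ give $\cO_Y(1)$ and so raise $t$.

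Now I compare twist and degree. With $m\ge3$ and $0\le j\le 9$, the twist $t$ is roughly $m-2-j\ge 1-j$, and the cohomological degree grows with $j$. The cases are as follows.
\begin{itemize}
\item If $t\ge -4$, Lemma \ref{l genus 12 v6 polynomial vanishing}(1), Lemma \ref{l genus 12 v6 universal kernels}(1) and Lemma \ref{l genus 12 v6 universal kernels}(3) give vanishing in degrees $>0$, $\ge2$ and $\ge3$, respectively. The relevant degrees $3+j$, $2+j$, $1+j$ exceed these bounds once $j\ge1$.
\item When $t\ge0$ and $j=0$, I instead use Lemma \ref{l genus 12 qf mixed vanishing} or Proposition \ref{p g=12 good filt} directly.
\item If $t=-5$, then $j\ge 6$ (using the $\cO(1)$ gains), and parts (2) and (2)/(4) of those lemmas give vanishing in degrees $>4$, $\ge6$ and $\ge7$. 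These bounds are below $3+j\ge9$, $2+j\ge8$ and $1+j\ge7$.
\item For $t\le -6$, I use $\dim Y=12$. It suffices that $3+j$, $2+j$ or $1+j$ exceeds $12$, or else I apply Serre duality, with $\omega_Y=\cO(-7)$, to reach positive twists. This requires $j$ near $9$.
\end{itemize}

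The main obstacle is the bookkeeping in this last comparison, and in particular the extreme case $j=9$, $m=3$, where the twist is about $-8$ and the degree is about $10$--$12$. There I would rather not rely on the degree bound alone. I would pass to the dual, using $K_9=\wedge^9\mathcal E_0^\vee\simeq\det\mathcal E_0^\vee\simeq\cO_Y(-9)$ and $\wedge^{9-j}\mathcal E_0\otimes\det\mathcal E_0^\vee\simeq K_j$. For large $j$ this rewrites $K_j$ as $\wedge^{9-j}\mathcal E_0(-9)$, which involves only a few factors of $\wedge^2\cQ$. The twist $m-9$ together with the degree $1+j$ to $3+j$ close to $\dim Y$ then fits Serre duality combined with Proposition \ref{p g=12 good filt}, applied to $\cU^\vee\simeq\wedge^3\cU(1)$-type factors. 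I expect the cases $t\in\{-5,-6,-7\}$ to need this careful matching.
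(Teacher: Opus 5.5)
Your reduction is the same as the paper's. You write $\mathcal E_0^\vee\simeq\cQ(-1)^{\oplus3}$, expand $K_j$, replace $\wedge^{\le2}\cQ$ by tensor powers of $\cQ$, and apply Lemmas \ref{l genus 12 v6 polynomial vanishing}, \ref{l genus 12 v6 universal kernels} and \ref{l genus 12 qf mixed vanishing}. The gap is in the numerical step that makes this close.

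You observe that factors $\wedge^3\cQ\simeq\cO_Y(1)$ raise the twist, but you never quantify this. You then treat the twist as roughly $m-2-j$, going down to about $-8$, and leave a case $t\le-6$ to a Serre-duality argument you do not carry out. The missing count is this. Suppose $(c_1,c_2,c_3)$ has $u$ entries equal to $1$, $v$ equal to $2$ and $w$ equal to $3$. Then the corresponding summand of $K_j$ is $\cQ^{\otimes u}\otimes(\wedge^2\cQ)^{\otimes v}(-d)$ with $d=u+2v+2w=j-w$, and $u+v+w\le3$ gives $d\le6$.

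Hence every twist after your rewriting is $m-2-d\ge m-8\ge-5$, and $m-1-d\ge-4$ for the $\Sym^2\cU$ summand in (3). Moreover $m-2-d=-5$ only when $m=3$ and $d=6$, which forces $j\ge6$. So the case $t\le-6$ is empty, and your $t\ge-4$ and $t=-5$ cases already finish the proof. Without this bound, the proposal does not handle the cases it itself marks as critical.

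Your supporting computation is also wrong. Since $\det\wedge^2\cQ\simeq\cO_Y(2)$, one has $K_9=\det\mathcal E_0^\vee\simeq\cO_Y(-6)$, not $\cO_Y(-9)$; this is consistent with $\omega_X\simeq\cO_Y(-7+6)|_X$. For $j=9$ the constraints force $w=3$ and $d=6$, so the twist is $m-8$. At $m=3$ this is exactly the $t=-5$ situation you already treat.

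There is also a smaller slip in your case $t\ge-4$. You say the degrees exceed the bounds once $j\ge1$, but in (3) with $j=1$ the degree is $1+j=2$, while Lemma \ref{l genus 12 v6 universal kernels}(3) needs $i\ge3$. For (3) with $j\in\{0,1\}$ you must use Lemma \ref{l genus 12 qf mixed vanishing}. It applies because $m-2-d\ge m-2-j\ge0$, once you write the second summand as $(\Sym^2\cU)(1)\otimes\cQ^{\otimes\mu}(m-2-d)$. With the bound $d\le\min(j,6)$ and this fix, your outline becomes the paper's proof, and no Serre duality is needed.
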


\begin{proof}
We have $\mathcal E_0^\vee\simeq (\wedge^2\mathcal Q^\vee )^{\oplus3} \simeq \mathcal Q(-1)^{\oplus3}$ and 
\[
K_j 
= \wedge^j\mathcal E_0^\vee
\simeq  \wedge^j (\mathcal Q(-1)^{\oplus3})
\simeq
\bigoplus_{\substack{
a_1+a_2+a_3=j\\
0\leq a_1,a_2,a_3\leq3}}
\bigotimes_{\ell=1}^3
(
(\wedge^{a_\ell}\mathcal Q)(-a_\ell)
).
\]
As $(\wedge^3\mathcal Q)(-3) \simeq \MO_Y(-2)$, 
it holds that 
\[
\bigotimes_{\ell=1}^3
(
(\wedge^{a_\ell}\mathcal Q)(-a_\ell)
) = 
\mathcal Q^{\otimes u}
\otimes
(\wedge^2\mathcal Q)^{\otimes v} (-d)
\]
where $u,v,w$ are the numbers of the entries equal to $1,2,3$, 
respectively, and $d:=u+2v+2w\leq6$. We have 
$j=u+2v+3w=d+w \geq d$. 
Since 
$\wedge^2\mathcal Q$ is a direct summand of
$\mathcal Q^{\otimes2}$ by $p>2$, it is enough to prove (1)'-(3)' 
for all integers $\nu$ and  $d$ satisfying 
$\nu \geq 0$, $d \leq j$,  and $0 \leq d \leq 6$.  
\begin{enumerate}
\item[(1)'] 
$ 
H^{3+j}(
Y,
\bigl(\Sym^2\mathcal E_0^\vee\bigr)
\otimes \cQ^{\otimes \nu}(m-d)
)=0$. 
\item[(2)'] 
$H^{2+j}(
Y,
(\mathcal E_0^\vee\otimes\Omega_Y^1)
\otimes \cQ^{\otimes \nu}(m-d)
)=0$. 
\item[(3)'] $
H^{1+j}(
Y,
\Omega_Y^2 
\otimes \cQ^{\otimes \nu}(m-d)
)=0$. 
\end{enumerate}
By $m \geq 3$ and $d \leq 6$, the following hold:  
\begin{enumerate}
\item[(i)] $m-2-d\geq  3 -2 -6 \geq -5$. 
\item[(ii)] If $m-2-d=-5$, then $m =3$ and $j \geq d = 6$. 
\end{enumerate}

Let us show (1)'.  We have 
\[
\bigl(\Sym^2\mathcal E_0^\vee\bigr)(2)
\simeq
(\Sym^2\mathcal Q)^{\oplus3}
\oplus
(\mathcal Q^{\otimes2})^{\oplus3}.
\]
As $\Sym^2 \cQ$ is a direct summand of $\cQ^{\otimes 2}$, 
it suffices to prove 
\[
H^{3+j}(Y, \cQ^{\otimes 2} \otimes 
\mathcal Q^{\otimes \nu}
(m-2-d))=0. 
\]
If $m-2-d \geq -4$, then 
the assertion holds by Lemma \ref{l genus 12 v6 polynomial vanishing}(1). 
Hence we may assume $m-2-d =-5$ by (i). 
Then (ii) implies $j \geq 6$. 
In this case, we are done by  
Lemma \ref{l genus 12 v6 polynomial vanishing}(2). 
Thus (1)' holds. 

Let us show (2)'. By $\Omega_Y^1\simeq\mathcal U\otimes\mathcal Q^\vee$ and
$\mathcal Q^\vee\simeq(\wedge^2\mathcal Q)(-1)$, 
we obtain
\[
(\mathcal E_0^\vee\otimes\Omega_Y^1)(2)
\simeq
(
\mathcal U\otimes\mathcal Q\otimes
\wedge^2\mathcal Q
)^{\oplus3}.
\]
Hence 
%As $\Sym^2 \cQ$ is a direct summand of $\cQ^{\otimes 2}$, 
it suffices to prove 
\[
H^{2+j}(Y, 
\mathcal U \otimes 
\mathcal Q^{\otimes \nu +3} (m-2-d))=0. 
\]
If $m-2-d \geq -4$, then 
the assertion holds by Lemma \ref{l genus 12 v6 universal kernels}(1). 
Hence we may assume $m-2-d =-5$ by (i). 
Then (ii) implies $j \geq 6$. 
In this case, we are done by  
Lemma \ref{l genus 12 v6 universal kernels}(2). 
Thus (2)' holds.

Let us show (3)'. 
Recall that 
$\mathcal Q^\vee
\simeq
(\wedge^2\mathcal Q)(-1)$ and $\wedge^2\mathcal Q^\vee
\simeq
\mathcal Q(-1)$. 
Since $p>2$, we get 
\[
\Omega_Y^2(2)
\simeq
(
\wedge^2\mathcal U\otimes
\Sym^2\mathcal Q^\vee
)(2)
\oplus
(
\Sym^2\mathcal U\otimes
\wedge^2\mathcal Q^\vee
)(2) 
\simeq 
(
\wedge^2\mathcal U\otimes
\Sym^2 (\wedge^2 \cQ)
)
\oplus
(
\Sym^2\mathcal U\otimes
\mathcal Q(1)
)
\]
Since $\wedge^2 \cF$ and $\Sym^2 \cF$ are direct summands of $\cF \otimes \cF$, it is enough to prove 
\[
H^{1+j}(Y, \wedge^2 \cU \otimes \cQ^{\otimes \mu} (m-2-d)) 
= H^{1+j}(Y, \Sym^2\cU \otimes \cQ^{\otimes \mu} (m-1-d)) =0
\]
for every $\mu \geq 0$. 
If $j \in \{0, 1\}$, then 
this follows from Lemma \ref{l genus 12 qf mixed vanishing}, which is applicable by 
\[
m-2 -d \geq m -2 -j \geq 3-2-1=0. 
\]
Hence we may assume $j \geq 2$. 
By (i), we have $m-1-d \geq -4$, and hence 
Lemma \ref{l genus 12 v6 universal kernels}(3) implies 
$H^{1+j}(Y, \Sym^2\cU \otimes \cQ^{\otimes \mu} (m-1-d)) =0$. 
Similarly, if $m-2-d \geq -4$, then 
Lemma \ref{l genus 12 v6 universal kernels}(3) implies 
$H^{1+j}(Y, \wedge^2 \cU \otimes \cQ^{\otimes \mu} (m-2-d)) =0$. 
Thus it suffices to show 
$H^{1+j}(Y, \wedge^2 \cU \otimes \cQ^{\otimes \mu} (-5)) =0$ 
for the case when $m-2-d=-5$. 
This follows from 
Lemma \ref{l genus 12 v6 universal kernels}(4), 
because (ii) implies $1+j \geq 7$. 
Thus (3)' holds. 
\end{proof}

\begin{thm}\label{t genus 12 F-split}
Assume $p>2$. 
Let $X$ be a prime Fano threefold of genus $12$. 
%over an algebraicallyclosed field of characteristic $p>3$.
Then $X$ is $F$-split.
\end{thm}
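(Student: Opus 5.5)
By Proposition \ref{p quasi-F-split quadrics}(3) it suffices to show $H^1(X,\Omega_X^2(p))=0$. I would realise $X$ inside $Y=\Gr(3,7)$ via \cite[Theorem 1.2]{KTprime}: $X$ is the zero locus of a regular section of $\cE:=(\wedge^2\cU^\vee)^{\oplus 3}$ on $\Gr(3,7)$, and $\rank\cE=9=12-3$. The lemmas are phrased on $\Gr(4,7)$, so I would pass to $\Gr(3,7)$ via the duality isomorphism used in Proposition \ref{p g=12 good filtU}. Under it, the role of $\cU$ on $\Gr(3,7)$ is played by $\cQ^\vee$ on $Y=\Gr(4,7)$. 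Then $\cE$ becomes $\cE_0=(\wedge^2\cQ)^{\oplus 3}$, and $X\subset Y$ is the zero scheme of a regular section of $\cE_0$. This gives the conormal sequence $0\to\cE_0^\vee|_X\to\Omega^1_Y|_X\to\Omega^1_X\to0$ and the Koszul resolution $0\to K_9\to\cdots\to K_1\to\cO_Y\to\cO_X\to0$ with $K_j=\wedge^j\cE_0^\vee$. This is precisely the set-up of Lemma \ref{l genus 12 qf ambient vanishing}.

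Next I take $\wedge^2$ of the conormal sequence. This yields $0\to\cF\to\Omega^2_Y|_X\to\Omega^2_X\to0$ together with a filtration of $\cF$ with graded pieces $\wedge^2(\cE_0^\vee|_X)$ and $\cE_0^\vee|_X\otimes\Omega^1_X$. It is more robust, however, to resolve $\Omega^2_X$ on $X$ by the complex
\[
0\to \Sym^2\cE_0^\vee|_X\to \cE_0^\vee\otimes\Omega^1_Y|_X\to\Omega^2_Y|_X\to\Omega^2_X\to0 .
\]
This is exact since $\cE_0^\vee|_X\to\Omega^1_Y|_X$ is a subbundle, and it is the standard Koszul-type resolution of $\wedge^2$ of a quotient. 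Twisting by $\cO(p)$ and splitting into short exact sequences, $H^1(X,\Omega^2_X(p))=0$ follows from
\[
H^1(X,\Omega^2_Y(p)|_X)=H^2(X,(\cE_0^\vee\otimes\Omega^1_Y)(p)|_X)=H^3(X,\Sym^2\cE_0^\vee(p)|_X)=0 .
\]

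Each of these restrictions to $X$ I would then resolve by tensoring the Koszul complex with the relevant bundle on $Y$. Chasing cohomology degrees, it suffices to prove $H^{1+j}(Y,\Omega^2_Y(p)\otimes K_j)=0$, $H^{2+j}(Y,(\cE_0^\vee\otimes\Omega^1_Y)(p)\otimes K_j)=0$ and $H^{3+j}(Y,\Sym^2\cE_0^\vee(p)\otimes K_j)=0$ for $0\le j\le 9$. These are exactly (3), (2) and (1) of Lemma \ref{l genus 12 qf ambient vanishing} with $m=p\ge3$, which is where $p>2$ enters, both here and in the direct-summand arguments of that lemma.

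The main obstacle I expect is the identification step. One must check that the description in \cite{KTprime} (a zero locus of a section of $(\wedge^2\cU^\vee)^{\oplus3}$ on $\Gr(3,7)$, in arbitrary characteristic) translates to $\cE_0=(\wedge^2\cQ)^{\oplus 3}$ on $\Gr(4,7)$, with the correct dualities and twists (using $\wedge^2\cQ^\vee\simeq\cQ(-1)$). One must also check that the section is regular, which follows from the codimension count $12-9=3$ and smoothness of $X$. Once these conventions match those of Lemma \ref{l genus 12 qf ambient vanishing}, the rest is routine diagram chasing.
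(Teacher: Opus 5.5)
Your proposal is correct and matches the paper's proof essentially step for step. The paper also writes $X$ as the zero scheme of a section of $\cE_0=(\wedge^2\cQ)^{\oplus 3}$ on $\Gr(4,7)$ and resolves $\Omega^2_X(p)$ by the $\Sym^2\cE_0^\vee \to \cE_0^\vee\otimes\Omega^1_Y \to \Omega^2_Y$ complex; exactness of that complex uses $p>2$, so that $\Sym^2$ agrees with the divided square, and not only the subbundle property. It then feeds the three needed vanishings through the Koszul complex into Lemma \ref{l genus 12 qf ambient vanishing} with $m=p$, and your remarks on the $\Gr(3,7)$ translation and on regularity of the section only make explicit what the paper takes from \cite{KTprime}.
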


\begin{proof}
By Proposition \ref{p quasi-F-split quadrics}, 
it is enough to show $H^1(X, \Omega^2_X(p))=0$. 
We use Notation \ref{n g=12}. 
By \cite[Theorem 1.2]{KTprime}, we can write
$X=Z(\sigma)\subset Y$ for some  section
$\sigma\in H^0(Y,\mathcal E_0)$, where 
$\mathcal E_0:=(\wedge^2\mathcal Q)^{\oplus3}$. 
It holds that $\mathcal O_X(-K_X)
\simeq
\mathcal O_Y(1)|_X$. 
%We also write $H$ for its restriction to $X$.
For $K_j:=\wedge^j\mathcal E_0^\vee$ ($0 \leq j \leq 9$), 
we have the following Koszul exact sequence associated with $\sigma$: 
\[
0
\to
K_9
\to
\cdots
\to
K_1
\to
K_0
\to
\mathcal O_X
\to
0.
\]
By taking the decomposition into the corresponding short exact sequences, we obtain $(\star)$ below. 
\begin{enumerate}
\item[($\star$)] 
Given an integer $n$ and a vector bundle $\cF$ on $Y$, 
if
$H^{n+j}(Y,\mathcal F\otimes K_j)=0$ for every $0\leq j\leq9$, 
then $H^n(X,\mathcal F|_X)=0$. 
\end{enumerate}
By applying $(\star)$ for 
\[
(n, \cF) = (3, \Sym^2 \cE_0^\vee(p)), \qquad 
(2, \cE_0^\vee \otimes \Omega_Y^1(p)), \qquad 
(1, \Omega_Y^2(p)), 
\]
it follows from Lemma \ref{l genus 12 qf ambient vanishing} that  %with $m=p$ gives
\[
H^3(
X,
\bigl(\Sym^2\mathcal E_0^\vee\bigr)(p)|_X
)= 
H^2(
X,
(\mathcal E_0^\vee\otimes\Omega_Y^1)(p)|_X
)=
H^1(
X,
\Omega_Y^2(p)|_X
)=0.
\]
Since $p>2$, 
the second exterior power of the conormal sequence $0
\to
\mathcal E_0^\vee|_X
\to
\Omega_Y^1|_X
\to
\Omega_X^1
\to
0$  gives the exact sequence 
\[
%\begin{aligned}
0
\to
\bigl(\Sym^2\mathcal E_0^\vee\bigr)(p)|_X
\to
(\mathcal E_0^\vee\otimes\Omega_Y^1)(p)|_X
\to
\Omega_Y^2(p)|_X
\to
\Omega_X^2(p)
\to
0.
%\end{aligned}
\]
By taking the decomposition into the corresponding short exact sequences, 
we obtain $H^1(X,\Omega_X^2(p))=0$, as required. 
\qedhere

\end{proof}

\begin{thm}\label{t prime Fsplit main}
Let $X$ be a prime Fano threefold of genus $g$. 
Then $X$ is $F$-split if $g$ and $p$ satisfy the following: 
{\small
\[
\begin{array}{c|cccccccccc}
g
& 2 & 3 & 4 & 5 & 6 & 7 & 8 & 9 & 10 & 12\\
\hline
\text{F-split}
& p>11 & p>7 & p>5 & p>3 & p>3
& p>2 & p>2 & p>2 & p>3 & p>2\\
% \text{GQFR}
% & p>5 & p>3 & p>2 & p>0 & p>0
% & p>0 & p>0 & p>0 & p>0 & p>0
\end{array}
\]
}
\end{thm}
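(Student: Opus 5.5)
The plan is a case-by-case assembly, organised by genus and by whether $|-K_X|$ is very ample.

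\emph{Step 1: the non-very-ample cases.} Suppose $|-K_X|$ is not very ample. By the facts recalled in Section 2, this forces $g\in\{2,3\}$. These are the hyperelliptic or trigonal-type cases (double covers of $\P^3$ or of a quadric). They were already treated in \cite{KTLift2}: the introduction states that Theorem \ref{intro main} was established there unless $|-K_X|$ is very ample. I will therefore cite \cite{KTLift2} for the bounds $p>11$ ($g=2$) and $p>7$ ($g=3$, hyperelliptic). This step requires checking that the bounds in \cite{KTLift2} for these families are at least as good as those in the table. I expect that check to be the only real obstacle, since it depends on the precise statements of that reference rather than on this paper.

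\emph{Step 2: the very ample cases with $g\le 5$ and $g\in\{6,7,9,10,12\}$.} For each of these genera the table entry is already covered by a result proved above:
\begin{itemize}
\item $g=3$: a quartic in $\P^4$, $F$-split for $p>7$ by Proposition \ref{p genus 3 F-split}.
\item $g=4$: $F$-split for $p>5$ by Proposition \ref{p genus 4 F-split}.
\item $g=5$: $F$-split for $p>3$ by Proposition \ref{p genus 5 Omega2}.
\item $g=6$: Theorem \ref{t genus 6 F-split}.
\item $g=7$: Theorem \ref{t genus 7 unified v1 F split}.
\item $g=9$: Theorem \ref{t genus 9 F-split v4}.
\item $g=10$: Theorem \ref{t genus 10 F-split v2}.
\item $g=12$: Theorem \ref{t genus 12 F-split}.
\end{itemize}

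\emph{Step 3: genus $8$.} Only $g=8$ remains without a dedicated section. Here $X$ is a codimension-$5$ linear section of $\Sigma=\Gr(2,6)\subset\P^{14}$ \cite{KTprime}, and I will run the genus $7$ argument on this ambient space.
\begin{enumerate}
\item By Proposition \ref{p quasi-F-split quadrics}, it suffices to prove $H^1(X,\Omega_X^2(p))=0$.
\item Take $\wedge^2$ of the conormal sequence, whose kernel term is $\MO_X(-1)^{\oplus5}$. Combined with Proposition \ref{p quasi-F-split quadrics}(1), this reduces the claim to $H^1(X,\Omega^2_\Sigma(p)|_X)=0$.
\item The Koszul complex then reduces this to $H^{\ell+1}(\Gr(2,6),\Omega^2(p-\ell))=0$ for $0\le\ell\le5$.
\item For $p>2$, decompose $\Omega^2_\Sigma=\wedge^2\cU\otimes\Sym^2\cQ^\vee\oplus\Sym^2\cU\otimes\wedge^2\cQ^\vee$.
\item Realise each summand as $\pi_*$ of a $\pi$-nef line bundle on a partial flag variety, as in Lemma \ref{l genus 6 Grassmannian Omega2}.
\item When the twist $p-\ell$ is positive, conclude by Kodaira vanishing (Proposition \ref{p flag Kodaira}).
\item When $p-\ell\le 0$, which happens for small $p$ (for instance $p=3$ with $\ell\in\{3,4,5\}$), use the strong linkage argument of Lemma \ref{l genus 6 Grassmannian Omega1} with the functional $\eta$. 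For twist $0$, use instead Remark \ref{r non-diag vanishing} together with $\ell+1\neq2$.
\end{enumerate}
Combining Steps 1–3 gives the table.
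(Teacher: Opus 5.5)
Your Steps 1 and 2 coincide with the paper's proof. The non-very-ample cases are quoted from \cite[Proposition 6.25]{KTLift2}, and the very ample cases $g\in\{3,4,5,6,7,9,10,12\}$ are exactly the results you list. The one difference is $g=8$. The paper does not reprove it but takes it from \cite[Theorem 1.5]{KT26}, whereas you rerun the genus-$6$/$7$ template on $\Sigma=\Gr(2,6)$. That route works. It makes the theorem self-contained within the paper's toolkit, at the cost of extra weight computations on $\Gr(2,6)$; the citation is simply shorter.

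The reduction to $H^{\ell+1}(\Sigma,\Omega^2_\Sigma(p-\ell))=0$ for $0\le\ell\le5$ is valid for $p\ge3$, since you need Proposition \ref{p quasi-F-split quadrics}(1) with $m=p-1\ge2$. Use the conventions of Notation \ref{n weight lattice on GL5} adapted to $\SL_6$, so that $\MO_\Sigma(1)$ corresponds to $\omega_4$. Then the summand $\Sym^2\cQ^\vee(m-1)$ is the pushforward of $\cL_{2\omega_3+(m-3)\omega_4}$ from $\Fl(2,3;6)$, and $\Sym^2\cU\otimes\wedge^2\cQ^\vee(m)$ is the pushforward of $\cL_{\omega_2+(m-3)\omega_4+2\omega_5}$ from $\Fl(1,2,4;6)$. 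Adding $-K$ gives $6\omega_3+m\omega_4$ and $5\omega_2+m\omega_4+4\omega_5$, both ample for every $m\ge1$. So Proposition \ref{p flag Kodaira} covers all positive twists on these partial flags; on the full flag variety the twist $m=1$ would not be covered. For the second summand the identification must come from Borel--Weil for the Levi $\GL_4\times\GL_2$, as in the genus $7$ and $9$ sections. The projective-bundle computation of Lemma \ref{l genus 6 line bundle pushforwards} does not give it, because $\wedge^2\cQ^\vee$ is not a twist of $\cQ$ when $\rank\cQ=4$.

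One detail of Step 7 is wrong, though easily repaired. The only negative twists are $H^5(\Sigma,\Omega^2_\Sigma(-1))$ and $H^6(\Sigma,\Omega^2_\Sigma(-2))$ for $p=3$. For the summand $\Sym^2\cQ^\vee(m-1)$, the functional $\eta=\sum_i\alpha_i^\vee$ (pairing $x_1-x_6$) gives no contradiction. For $m=-1$ the weight is $[-1,-1,-1,-3,1,1]$, so $\lambda^++\rho=[4,3,2,2,1,-1]$ and $\lambda^+=[-1,-1,-1,0,0,-1]$, giving $\la\lambda^+,\eta\ra=0$. Similarly, for $m=-2$ one gets $\lambda^+=[-2,-2,-1,-1,0,-2]$ with pairing $0$.

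Use the first coordinate instead. If $\mu\le\lambda^+$ is dominant, the representative $\mu=\lambda^+-\sum c_i\alpha_i$ has $\mu_1\le\lambda^+_1$ and $\sum_i\mu_i=\sum_i\lambda^+_i$. This forces $-4=\sum_i\mu_i\le 6\mu_1\le -6$ for $m=-1$, and $-8\le-12$ for $m=-2$, a contradiction in both cases. For the other summand $\eta$ does work: there $\lambda^+=[-1,-1,0,-1,-1,0]$ and $[-2,-1,-1,-2,-1,-1]$, both pairing to $-1$. With this change your genus-$8$ argument is complete.
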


\begin{proof}
If $|-K_X|$ is not very ample, then the assertion follows from  
\cite[Proposition 6.25]{KTLift2}. 
If $|-K_X|$ is very ample, then the assertion holds as follows: 
\begin{itemize}
\item $g=3$: Proposition \ref{p genus 3 F-split}. 
\item $g=4$: Proposition \ref{p genus 4 F-split}. 
\item $g=5$: Proposition \ref{p genus 5 Omega2}. 
\item $g=6$: Theorem \ref{t genus 6 F-split}. 
\item $g=7$: Theorem \ref{t genus 7 unified v1 F split}. 
\item $g=8$: \cite[Theorem 1.5]{KT26}. 
\item $g=9$: Theorem \ref{t genus 9 F-split v4}. 
\item $g=10$: Theorem \ref{t genus 10 F-split v2}. 
\item $g=12$: Theorem \ref{t genus 12 F-split}. 
\end{itemize}
\end{proof}

\begin{thm}\label{t Fsplit main}
Let $X$ be a Fano threefold. 
Assume $p>11$. 
Then $X$ is $F$-split. 
\end{thm}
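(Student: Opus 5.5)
The argument reduces everything to the prime case and then reads off the bound from Theorem \ref{t prime Fsplit main}. Let $X$ be a Fano threefold and assume $p>11$. We split into cases according to whether $\Pic X = \Z K_X$, i.e.\ whether $X$ is a prime Fano threefold.

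\emph{Case 1: $X$ is not prime.} This case was settled in \cite{KTLift2}. As recalled in the introduction, Theorem \ref{intro main} was established there except when $X$ is a prime Fano threefold with $|-K_X|$ very ample. So we quote the corresponding result of \cite{KTLift2} for Fano threefolds with $\rho(X)\geq 2$, or with index $r_X\geq 2$, in characteristic $p>11$ (or any smaller bound it gives). This yields that $X$ is $F$-split. We must check that the cited statement covers every non-prime case, including $\rho(X)=1$ with $r_X\ge 2$. This is the only external input, and I expect it to be the main point requiring care in locating the precise reference, perhaps a combination of theorems in \cite{KTLift2}.

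\emph{Case 2: $X$ is prime of genus $g$.} Then $2\le g\le 12$ and $g\neq 11$. Theorem \ref{t prime Fsplit main} gives $F$-splitting whenever $p$ exceeds the listed bound for $g$. The largest bound in the table is $p>11$, attained for $g=2$, so the hypothesis $p>11$ satisfies every entry. Hence $X$ is $F$-split.

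Finally, we note via the remark after Theorem \ref{intro main} that $F$-split is equivalent to globally $F$-regular for smooth Fano varieties. This recovers Theorem \ref{intro main}(1).
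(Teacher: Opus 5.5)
Your proposal is correct and is essentially the paper's proof: the prime case is read off from Theorem~\ref{t prime Fsplit main}, whose worst bound $p>11$ occurs at $g=2$, and the non-prime case is imported from \cite{KTLift2}. The reference you were looking for is a single one: the paper covers the entire non-prime case with \cite[Theorem G]{KTLift2}, so no combination of results is needed.
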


\begin{proof}
If $X$ is a prime Fano threefold, 
then we are done by Theorem \ref{t prime Fsplit main}. 
Otherwise, the assertion follows from 
\cite[Theorem G]{KTLift2}. 
\end{proof}

\bibliographystyle{skalpha}
\bibliography{reference.bib}

\end{document}